\documentclass[12pt]{article}
\usepackage{amssymb,amsmath,amsthm, amsfonts}
\usepackage{graphicx}
\usepackage{epsfig}
\usepackage{tikz}

\textwidth=16.5cm \textheight=23.5cm \headheight=0cm
\topmargin=1cm
\def\disp{\displaystyle}

\oddsidemargin 0cm \headsep=-1.0cm \raggedbottom

\def\dref#1{(\ref{#1})}

\theoremstyle{plain}
\newtheorem{theorem}{Theorem}[section]
\newtheorem{lemma}{Lemma}[section]

\theoremstyle{definition}

\newtheorem{remark}{Remark}[section]

\setcounter{equation}{0}

\numberwithin{equation}{section}

\linespread{1.6}

\begin{document}

\title{\bf Some further progress for existence and boundedness of
solutions to a  two-dimensional chemotaxis-(Navier-)Stokes
system modeling coral fertilization
}

\author{Jiashan Zheng,
Kaiqiang Li\thanks{Corresponding author.  E-mail address:
 kaiqiangli19@163.com (K. Li)} \\
     School of Mathematics and Information Sciences,
     Yantai University,\\
      Yantai 264005,  P.R.China \\
}
\date{}

\maketitle \vspace{0.3cm}
\noindent
\begin{abstract}
In this paper, we investigate the effects exerted by the interplay among Laplacian diffusion, chemotaxis cross
diffusion and the fluid dynamic mechanism on global existence and
boundedness of the solutions. The mathematical model considered
herein appears as
$$
\left\{
\begin{array}{l}
n_t+u\cdot\nabla n=\Delta n-\nabla\cdot( nS(n)\nabla c)-nm,\quad
x\in \Omega, t>0,\\
 \disp{ c_{ t}+u\cdot\nabla c=\Delta c-c+w},\quad
x\in \Omega, t>0,\\
 \disp{w_{t}+u\cdot\nabla w=\Delta w-nw},\quad
x\in \Omega, t>0,\\
u_t+\kappa(u \cdot \nabla)u+\nabla P=\Delta u+(n+m)\nabla \phi,\quad
x\in \Omega, t>0,\\
\nabla\cdot u=0,\quad
x\in \Omega, t>0,\\
\end{array}\right.\eqno(KSNF)
$$
in a bounded domain $\Omega\subset \mathbb{R}^2$ with a smooth boundary, which describes the process of coral fertilization occurring in ocean flow.
Here  $\kappa\in \mathbb{R}$ is a given constant,
$\phi\in W^{2,\infty}(\Omega)$
and $S(n) $ is a scalar function
 satisfies  $|S(n)|\leq C_S(1+n)^{-\alpha}$   {for all}  $n\geq 0$ with some $C_S>0$ and  $\alpha\in\mathbb{R}$.   It is proved  that if either
$\alpha>-1,\kappa=0$ or $\alpha\geq-\frac{1}{2},\kappa\in\mathbb{R}$ is satisfied,
  then for any reasonably smooth
initial data, the corresponding Neumann-Neumann-Neumann-Dirichlet initial-boundary problem $(KSNF)$ possesses a globally
classical solution.
 In case of the stronger assumption $\alpha>-1,\kappa = 0$ or  $\alpha>-\frac{1}{2},\kappa \in\mathbb{R},$  we moreover show that the corresponding initial-boundary problem
 admits a unique global classical solution which is uniformly bounded on $\Omega\times(0,\infty)$.
\end{abstract}

\vspace{0.3cm}
\noindent {\bf\em Keywords:} Navier-Stokes system; Boundedness;
Global existence; Coral fertilization

\noindent {\bf\em 2020 Mathematics Subject Classification}:~ 35K55, 35Q92, 35Q35, 92C17

\newpage
\section{Introduction}
In a liquid environment, biological cells not only diffuse by themselves, but also bias their movement in
reaction to an external chemical signal. Chemotaxis, an oriented movement of cells (or organisms, bacteria) effected by the chemical gradient,
plays an important role in describing many biological phenomena. The celebrated work of Keller and Segel in 1970 proposed heuristically a chemotaxis model, when the chemical is secreted by the cells themselves, which in its classical form is as follows:
\begin{equation}
\left\{\begin{array}{ll}
n_t=\Delta n-\chi\nabla\cdot(n \nabla c),
\quad
x\in \Omega,~ t>0,\\
\disp{ v_t=\Delta c- c+n,}\quad
x\in \Omega, ~t>0,\\
\end{array}
\right.\label{sss722dff344101.ddgghhhff2ffggffggx16677}
\end{equation}
where $n$ denotes the cell density, $c$ represents the chemoattractive concentration, which is secreted by the
cells themselves.
Starting from the pioneering work of Keller and Segel \cite{Keller79}, an extensive mathematical literature has
grown on the Chemotaxis model and its variants, which  mainly concentrates on the
boundedness and blow-up of the solutions (see e.g. \cite{Cie791,Cie201712791,Tao794,Winkler79,Winkler72,Zheng00,Zhengssdefr23}). This fascinating
behavior already emerges 
if either the initial
mass of cells $\int_\Omega n_0$ is large enough (\cite{Herrero710}), or for certain initial data of arbitrary initial mass in dimensions
$N \geq 3$ (\cite{Winkler793}).  Moreover,
models with prevention of overcrowding, volume effects, logistic source or involving
more than one chemoattractant have also been studied in  Cie\'{s}lak and Stinner \cite{Cie791,Cie201712791},
Tao and Winkler \cite{Tao794,Winkler79,Winkler72}, and Zheng et al. \cite{Zheng00,Zhengssdefr23}
and the references therein.

Other variants of the model \dref{sss722dff344101.ddgghhhff2ffggffggx16677} has been used in the mathematical study of coral broad-
cast spawning.  In \cite{Kiselevdd793} and \cite{Kiselevsssdd793}, Kiselev and Ryzhik have studied a two-component system modeling the fertilization process
   \begin{equation}
 \left\{\begin{array}{ll}
 \rho_t+u\cdot\nabla\rho=\Delta \rho-\chi\nabla\cdot( \rho\nabla c)-\varepsilon\rho^q,
 \quad
\\
 \disp{ 0=\Delta c +\rho,}\quad\\
 \end{array}\right.\label{722dff344101.dddddgghggghhff2ffggffggx16677}
\end{equation}
in $\mathbb{R}^N$, where $\rho$ represents the density of egg (sperm) gametes, $u$ is a prescribed solenoidal fluid velocity, $\chi> 0 $
denotes the chemotactic sensitivity constant, and $\varepsilon\rho^q$ designates the fertilization phenomenon. Mathematical analysis on the Cauchy problem of (1.1) in the whole
two-dimensional plane shows the different effects exerted by chemotaxis on fertilization in the
case when $q > 2$ and $q = 2$, respectively \cite{Kiselevdd793} and \cite{Kiselevsssdd793}. Moreover, they proved that the total
mass $m_0(t) =\int_{\mathbb{R}^2}\rho(x,t)dx$ approaches a positive constant whose lower bound is $C(\chi,\rho_0 ,u)$
as $t\rightarrow\infty$ when $q>2$.  In the critical case of $N = q = 2$,
%
 a corresponding weaker
but yet relevant effect within finite time intervals is detected (see \cite{Kiselevsssdd793}).

 In some cases of chemotactic movement
in flowing environments the mutual influence between the cells and the fluids may be
significant. However, some experiment evidence and numerical simulation indicated that liquid environment plays an important role on the
chemotactic motion of cells \cite{Bellomo1216,Tuval1215,Winkler31215}.  Considering that the motion of the fluids is determined by the incompressible (Navier-)Stokes equations,
Tuval et al. \cite{Tuval1215} proposed the following chemotaxis-(Navier)-Stokes system to describe such
coupled biological phenomena in the context of signal consumption by cells:
\begin{equation}
\left\{\begin{array}{ll}
n_t+u\cdot\nabla n=\Delta n-\nabla\cdot( nS(x,n,c)\nabla c),\quad
x\in \Omega, t>0,\\
c_t+u\cdot\nabla c=\Delta c-nc,\quad
x\in \Omega, t>0,\\
u_t+\kappa (u\cdot\nabla)u+\nabla P=\Delta u+n\nabla \phi,\quad
x\in \Omega, t>0,\\
\nabla\cdot u=0,\quad
x\in \Omega,\; t>0,\\
\end{array}\right.\label{1.1hhjffggjddssgdddgtyy}
\end{equation}
where $\kappa$,
$P$ and $\phi$ denote, respectively, the strength of nonlinear fluid convection, the pressure of the fluid
and the potential of the gravitational field.
Here  $S$ is a given chemotactic sensitivity function, which can either be a scalar function
or, more generally, a tensor-valued function (see, e.g., Xue and Othmer \cite{Xusddeddff345511215}).
And
the term $n\nabla \phi$ characterizes that the motion of fluid is assumed to be driven by buoyant forces
within the gravitational field with potential $\phi$. 
In this system, chemotaxis and fluid
are coupled through both the transport of the cells as well as signal  defined by the
terms $u\cdot \nabla n$ as well as $u\cdot \nabla c$. 
During the past years, analytical results on \dref{1.1hhjffggjddssgdddgtyy} seem to concentrate on the issue whether the solutions
of corresponding initial-boundary problem \dref{1.1hhjffggjddssgdddgtyy} are global in time and bounded (see e.g. Lorz \cite{Lorz79}, Duan-Lorz-Markowich \cite{Duan12186}, Liu-Lorz \cite{Liucvb12176}, Winkler \cite{Winkler31215,Winkler51215,Winkler61215,Winklerssdff11215},
Chae-Kang-Lee \cite{Chaexdd12176}, Zhang-Zheng \cite{Zhangcvb12176}, Zheng \cite{Zhekkllndsssdddgssddsddfff00} and references therein).  For example, if
$ S(x,n,c)=S(c)$ is a scalar function, Winkler (\cite{Winkler31215,Winkler61215}) proved  that in two-dimensional space \dref{1.1hhjffggjddssgdddgtyy}
possesses  a unique global classical solution which stabilizes to the spatially homogeneous equilibrium   $(\bar{n}_0, 0, 0)$ with $\bar{n}_0:=\frac{1}{|\Omega|}\int_{\Omega}n_0$ as $t\rightarrow\infty$. Besides the results mentioned above, for achieving the global solvability and even stabilization, some nonlinear dynamic mechanisms which coincide with the corresponding
biological contexts are introduced to be taken full advantage of in mathematics, such as nonlinear cell diffusion (\cite{Tao71215,Winklerssscvb12176,Winkleghhhr51215}), saturation effects at large densities of cells (\cite{Winkler11215,Zheklllkkkkllllndsssdddgssddsddfff00}), signal-dependent sensitivity (\cite{Tao34555571215}),
and logistic source (\cite{Wajjjjngssddff21215,Winkler444sdddssdff51215}).

In various situations, however, the interaction of chemotactic movement of the gametes and the surrounding fluid is not negligible (see   Espejo and Suzuki
\cite{Espejoss12186}, Espejo and Winkler  \cite{EspejojjEspejojainidd793}). Specially, when extra assuming that the chemical signal is also transported through the fluid as the
gametes undergone in \dref{722dff344101.dddddgghggghhff2ffggffggx16677}, and with the evolution of the velocity of the fluid being modelled
by an incompressible (Navier-)Stokes equation, the mathematical model \dref{722dff344101.dddddgghggghhff2ffggffggx16677} becomes the following  chemotaxis-(Navier-)Stokes model
\begin{equation}
\left\{\begin{array}{ll}
n_t+u\cdot\nabla n=\Delta n-\nabla\cdot( nS(x,n,c)\nabla c)-\mu n^2,\quad
x\in \Omega, t>0,\\
v_t+u\cdot\nabla c=\Delta c-c+n,\quad
x\in \Omega, t>0,\\
u_t+\kappa (u\cdot\nabla)u+\nabla P=\Delta u+n\nabla \phi,\quad
x\in \Omega, t>0,\\
\nabla\cdot u=0,\quad
x\in \Omega,\; t>0,\\
\end{array}\right.\label{1.1hhjffggjddssgkkllldddgtyy}
\end{equation}
where $\mu\geq 0$ is a constant regulating
the strength of the fertilization process.
 Note that the fluid motion is
governed by the full Navier-Stokes system with nonlinear convection for $\kappa\neq 0$ and by the
Stokes equations if $\kappa = 0$. The chemotaxis-(Navier-)Stokes model system  \dref{1.1hhjffggjddssgkkllldddgtyy} has been studied in the last few years and the main focus is on the solvability result  (see e.g. \cite{Kegssddsddfff00,Lidddfffjkkkkucvb12176,LiuZhLiuLiuandddgddff4556,Wangss21215,Tao41215,Wangssddss21215,Zhenddddgssddsddfff00,Zhengsssssssddd0}).
 If
$\mu= 0$ in system \dref{1.1hhjffggjddssgkkllldddgtyy} without logistic source, the global boundedness of classical
solutions to the Stokes-version of system \dref{1.1hhjffggjddssgkkllldddgtyy} with the tensor-valued $S = S(x, n, c)$
satisfying $|S(x, n, c)| \leq C_S(1+n)^{-\alpha}$ with some $C_S > 0$ and $\alpha > 0$ which implies that the
effect of chemotaxis is weakened when the cell density increases has been proved
for any $\alpha> 0$ in two dimensions \cite{Wang21215} and for $\alpha> \frac{1}{2}$ in three dimensions \cite{Wangss21215}.
Furthermore, similar results are also valid for the three-dimensional Stokes version ($\kappa=0$ in the first equation of \dref{1.1hhjffggjddssgkkllldddgtyy}) of system \dref{1.1hhjffggjddssgkkllldddgtyy}
with $\alpha>\frac{1}{2}$ (see Wang-Xiang \cite{Wangss21215}). In the three dimensional case,
Wang-Liu \cite{LiuZhLiuLiuandddgddff4556} showed that the Keller--Segel--Navier-Stokes ($\kappa\neq0$ in the first equation of \dref{1.1hhjffggjddssgkkllldddgtyy})
system \dref{1.1hhjffggjddssgkkllldddgtyy} admits a global weak solution for tensor-valued sensitivity $S(x, n, c)$ satisfying \dref{x1.73142vghf48rtgyhu} and \dref{x1.73142vghf48gg} with $\alpha > \frac{3}{7}$.
 { Later, this restriction was improved to $\alpha > \frac{1}{3}$ ( \cite{LiuZhLiuLiuandddgddff4556}) by one of the current authors,}
which, in light of the known results for the fluid-free system (see Horstmann-Winkler \cite{Horstmann791} and
Bellomo et al. \cite{Bellomo1216}), is an optimal restriction on $\alpha$. When $S(x, n, c) \equiv 1$ and $\kappa = 1$, Winkler \cite{Winklessddffr444sdddssdff51215}
showed that if
$\|n_0\|_{L^1(\Omega)} < 2\pi$, the system \dref{1.1hhjffggjddssgkkllldddgtyy} admits a globally defined generalized
solution, in particular, this hypothesis is fully explicit and independent of the initial
size of further solution components. 
  For more works
about the chemotaxis-(Navier-)Stokes models \dref{1.1hhjffggjddssgkkllldddgtyy} and its variants, we mention that a corresponding quasilinear version
or the logistic damping has been deeply investigated by  Zheng
\cite{Zhengddfggghjjkk1,Zhengsdsd6}, Wang-Liu \cite{Lidddfffjkkkkucvb12176}, Tao-Winkler \cite{Tao41215},
 Wang et. al.
\cite{Wang21215,Wangss21215}, Lankeit \cite{Lankeitffg11}.
%
More related results are obtained (see \cite{Lidddfffjkkkkucvb12176,Wangssddff21215,Wajjjjngssddff21215}).


Besides the results mentioned above, a large number of variants of
system \dref{1.1hhjffggjddssgkkllldddgtyy} have been investigated,
specially, in order  to analyze a further refinement of the model \dref{722dff344101.dddddgghggghhff2ffggffggx16677} which
explicitly distinguishes between sperms and eggs,  several works addressed 
the  Navier-Stokes version of \dref{722dff344101.dddddgghggghhff2ffggffggx16677}:
\begin{equation}
 \left\{\begin{array}{ll}
   n_t+u\cdot\nabla n=\Delta n-\nabla\cdot(nS(n)\nabla c)-nm,\quad
x\in \Omega, t>0,\\
    c_t+u\cdot\nabla c=\Delta c-c+m,\quad
x\in \Omega, t>0,\\
 m_t+u\cdot\nabla m=\Delta m-nm,\quad
x\in \Omega, t>0,\\
u_t+\kappa(u \cdot \nabla)u+\nabla P=\Delta u+(n+m)\nabla \phi,\quad
x\in \Omega, t>0,\\
\nabla\cdot u=0,\quad
x\in \Omega, t>0,\\
 \end{array}\right.\label{1.dddduiikkldffdffg1}
\end{equation}
where $n$ represents the density of the sperms and $m$ denotes the density of eggs which
release the chemical signal with concentration $c$ to attract the sperms.
 Here the chemotactic sensitivity $S$
is assumed to be a given scalar function satisfying
\begin{equation}\label{x1.73142vghf48rtgyhu}
S\in C^2(\bar{\Omega})~~~\mbox{and}~~~ S(n) \geq 0 ~~~\mbox{for all}~~~ n\geq 0
\end{equation}
and
\begin{equation}\label{x1.73142vghf48gg}
|S(n)|\leq C_S(1 + n)^{-\alpha} ~~~~\mbox{for all}~~~ n\geq 0
\end{equation}
with some $C_S > 0$ and $\alpha\geq 0$.
System \dref{1.1hhjffggjddssgkkllldddgtyy} models
the spatio-temporal dynamics of the coral fertilization in the fluid with velocity field $u$ satisfying the incompressible (Navier-)Stokes equations with associated pressure $P$ and external force
$(n+m)\nabla\phi$. During the past years, analytical results on \dref{1.1hhjffggjddssgkkllldddgtyy} seem to concentrate on the issue whether the solutions
of corresponding initial-boundary problem are global in time and bounded
(see e.g.  \cite{Lidfff00,BlackFFGG,LiuansdddWang,Liua22nsdddWang,Liuans333dddWang,Xieghhhr51215}).
In fact, if $N=2$ and $S\equiv1$,
Espejo and Winkler (\cite{EspejojjEspejojainidd793}) established the global existence of classical solutions to the associated initial-boundary
value problem \dref{1.1hhjffggjddssgkkllldddgtyy}, which tends  towards a spatially homogeneous equilibrium in the large time limit. When the chemotactic sensitivity $S$ is tensor-valued and
fulfills \dref{x1.73142vghf48gg} with $\alpha> \frac{1}{3}$, the global boundedness and asymptotics of classical solutions have
been established in \cite{Lidfff00} for $N = 3$ and $\kappa = 0$. In \cite{Zhengssssddd0},  we  extended
the global existence of weak solutions to values $\alpha > 0$.
A natural question is that whether or not the assumption of $\alpha$ is optimal? Can we further relax the
restriction on $\alpha$, say, to $\alpha\leq0$? In fact, when  $S$ is  tensor-valued and fulfills
 \dref{x1.73142vghf48gg}
with some $C_S > 0$ and $\alpha>-\frac{1}{2}$, it was shown in \cite{Wanssssssg21215}  that
\dref{1.1hhjffggjddssgdddgtyy} possesses the globally classical solutions in dimension two if
$\kappa\neq0$,  while  if  $\kappa=0,$
the  global  boundedness  solutions for system were proved. However, since complementary results on
possibly emerging explosion phenomena are rather barren, it is still unknown that corresponding
uniform $L^p$ bounds could be achieved for smaller values of $\alpha$.
In the present work, we attempt to make use of a different method, by which conditional
estimates for $u$ and $c$ subject to some uniform $L^p$ norms of $n$ are established, to explore how far the
saturation effects at large
densities of  sperms can prevent the occurrence of singularity formation
phenomena.

Inspired by the results mentioned above, we create a new method to further weaken the restriction on $\alpha$, under the circumstance {that}  $S$ is a scalar function.
 On the other hand, plenty of authors have also considered the chemotaxis-Navier-Stokes system with a nonlinear diffusion or  $S$ is supposed to be a chemotactic sensitivity tensor, 
  one can
see   \cite{Winkler11215,Zhekkllndsssdddgssddsddfff00,Wangjjk5566ddfggghjjkk1}
 and the references therein.

{\bf Main Result.} Considering its physical/biological applications, we study an initial-boundary value
problem of the model \dref{1.dddduiikkldffdffg1} supplemented with the following initial and boundary
conditions:
\begin{equation}\label{ccvvx1.73142sdd6677gg}
n(x, 0) = n_0(x), c(x, 0) = c_0(x),m(x, 0) = m_0(x)~~~ \mbox{and}~~ u(x, 0) = u_0(x), x\in\Omega
\end{equation}
and the boundary conditions
\begin{equation}\label{ccvvx1.ddfff731426677gg}
\disp{(\nabla n-nS(n))\cdot\nu=\nabla c\cdot\nu=\nabla m\cdot\nu=0,\;u=0,}\quad
x\in \partial\Omega, t>0,
\end{equation}
where  $\Omega\subset \mathbb{R}^2$ is a bounded domain with smooth boundary and $\nu$ denotes the outward unit
normal vector on $\partial\Omega$.

 To prepare a precise presentation
of our main result in this direction, throughout this work we assume that the given gravitational
potential function $\phi$ fulfills
\begin{equation}
\phi\in W^{2,\infty}(\Omega),
\label{dd1.1fghyuisdakkkllljjjkk}
\end{equation}
and that the quadruple $(n_0, c_0,m_0, u_0)$ of initial data satisfies
%
%
\begin{equation}\label{ccvvx1.731426677gg}
\left\{
\begin{array}{ll}
\displaystyle{n_0\in C^\kappa(\bar{\Omega})~~\mbox{for certain}~~ \kappa > 0~~ \mbox{with}~~ n_0\geq0 ~~\mbox{in}~~\bar{\Omega}},\\
\displaystyle{c_0\in W^{1,\infty}(\Omega)~~\mbox{with}~~c_0>0~~\mbox{in}~~\bar{\Omega},}\\
\displaystyle{m_0\in W^{1,\infty}(\Omega)~~\mbox{with}~~m_0>0~~\mbox{in}~~\bar{\Omega},}\\
\displaystyle{u_0\in D(A^\gamma)~~\mbox{for~~ some}~~\gamma\in ( \frac{{1}}{2}, 1),}\\
\end{array}
\right.
\end{equation}
where $A$ denotes the Stokes operator with domain $D(A) := W^{2,{2}}(\Omega)\cap  W^{1,{2}}_0(\Omega) \cap L^{2}_{\sigma}(\Omega)$
and
$L^{2}_{\sigma}(\Omega) := \{\varphi\in  L^{2}(\Omega)|\nabla\cdot\varphi = 0\}$. 


Within the context of these hypothesis, our main results with regard to global solvability and boundedness  can be formulated as follows.

\begin{theorem}\label{theossddrem3}
Let $\Omega\subset \mathbb{R}^2$ be a bounded    domain with a smooth boundary,
\dref{dd1.1fghyuisdakkkllljjjkk} and \dref{ccvvx1.731426677gg} hold, and suppose that $S$ satisfies \dref{x1.73142vghf48rtgyhu} and \dref{x1.73142vghf48gg}.

If one of the
following cases holds:

(i)
\begin{equation}
\alpha>-1~~~\mbox{and}~~~\kappa=0;
\label{sssdddd1.1sssfghyuisdakkkllljjjkk}
\end{equation} 

(ii) \begin{equation}
\alpha\geq-\frac{1}{2}~~~\mbox{and}~~~\kappa\in\mathbb{R}.
\label{sssdddd1.1sssfghyuisdakkkllljjddddjkk}
\end{equation}

Then there
exist functions
$$
 \left\{\begin{array}{ll}
 n\in C^0(\bar{\Omega}\times[0,\infty ))\cap C^{2,1}(\bar{\Omega}\times(0,\infty )),\\
  c\in  C^0(\bar{\Omega}\times[0,\infty ))\cap C^{2,1}(\bar{\Omega}\times(0,\infty )),\\
    m\in  C^0(\bar{\Omega}\times[0,\infty ))\cap C^{2,1}(\bar{\Omega}\times(0,\infty )),\\
  u\in  C^0(\bar{\Omega}\times[0,\infty ); \mathbb{R}^2)\cap C^{2,1}(\bar{\Omega}\times(0,\infty ); \mathbb{R}^2),\\
  P\in  C^{1,0}(\bar{\Omega}\times(0,\infty )),
   \end{array}\right.
 $$
 which solve \dref{1.dddduiikkldffdffg1}, \dref{ccvvx1.73142sdd6677gg}--\dref{ccvvx1.ddfff731426677gg}
   classically in $\Omega\times (0, \infty)$. Moreover,  when $\alpha>-1,\kappa = 0$ or  $\alpha>-\frac{1}{2},\kappa \in\mathbb{R},$ 
  this solution is bounded in the sense that
there exists $C > 0$ fulfilling
\begin{equation}
\|n(\cdot, t)\|_{L^\infty(\Omega)}+\|c(\cdot, t)\|_{W^{1,\infty}(\Omega)}+\|m(\cdot, t)\|_{W^{1,\infty}(\Omega)}+\| u(\cdot, t)\|_{L^{\infty}(\Omega)}\leq C~~ \mbox{for all}~~ t>0.
\label{1.163072xggsssttyyu}
\end{equation}

\end{theorem}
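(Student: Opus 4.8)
The plan is to follow the by-now standard three-step scheme for chemotaxis-(Navier-)Stokes systems — local existence with an extensibility criterion, a chain of a priori estimates of increasing strength, and a final bootstrap to the regularity needed for classical solvability — but with the exponent bookkeeping tuned so as to cover the range $\alpha>-1$ (for $\kappa=0$) and $\alpha\ge-\tfrac12$ (for $\kappa\in\mathbb{R}$). First I would establish local existence of a classical solution on a maximal interval $(0,T_{\max})$ by a contraction-mapping argument in a suitable product space, obtaining the usual extensibility criterion: if $T_{\max}<\infty$ then $\|n(\cdot,t)\|_{L^\infty(\Omega)}+\|c(\cdot,t)\|_{W^{1,\infty}(\Omega)}+\|m(\cdot,t)\|_{W^{1,\infty}(\Omega)}+\|A^\gamma u(\cdot,t)\|_{L^2(\Omega)}\to\infty$ as $t\nearrow T_{\max}$.

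Next I would collect the elementary bounds that come essentially for free from the structure of \dref{1.dddduiikkldffdffg1}: integrating the first and third equations gives $\|n(\cdot,t)\|_{L^1}\le\|n_0\|_{L^1}$ and $\|m(\cdot,t)\|_{L^1}\le\|m_0\|_{L^1}$ together with the space-time estimate $\int_0^\infty\!\!\int_\Omega nm<\infty$; the maximum principle applied to the $m$-equation yields $\|m(\cdot,t)\|_{L^\infty}\le\|m_0\|_{L^\infty}$, and then, since the $c$-equation is of production type with already bounded source $m$, a comparison argument gives $\|c(\cdot,t)\|_{L^\infty}\le C$. This $L^\infty$-boundedness of $c$ is the key simplification that, in the present consumption-free model, replaces the absorptive structure exploited in signal-consumption systems.

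The heart of the argument is a coupled functional inequality controlling $\int_\Omega n^p$ (or $\int_\Omega n\ln n$ in the borderline cases) together with $\int_\Omega|\nabla c|^{2q}$ and a fluid quantity $\int_\Omega|A^{\theta}u|^{2}$ for a suitable $\theta$. Testing the $n$-equation with $n^{p-1}$ produces the chemotactic contribution $(p-1)\int_\Omega n^{p-1}S(n)\nabla n\cdot\nabla c$, which after Young's inequality and \dref{x1.73142vghf48gg} is dominated by half of the diffusion term plus a multiple of $\int_\Omega n^{p-2\alpha}|\nabla c|^2$; this is estimated through two-dimensional Gagliardo--Nirenberg interpolation against the dissipation terms for $n$ and for $\nabla c$, while the convective couplings $u\cdot\nabla n$, $u\cdot\nabla c$ and $\kappa(u\cdot\nabla)u$ are handled via the Stokes (resp. Navier--Stokes) smoothing estimates for $u$ driven by the force $(n+m)\nabla\phi$. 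The admissible range of $\alpha$ is exactly what makes the exponents in this interpolation close: when $\kappa=0$ the linear Stokes problem delivers more regularity for $u$ at a given $L^p$-level of $n$, lowering the threshold to $\alpha>-1$, whereas for $\kappa\neq0$ the nonlinear convection forces the stronger requirement $\alpha\ge-\tfrac12$. Concretely I would implement this as a conditional-estimate loop: assuming a uniform bound on $\|n(\cdot,t)\|_{L^{p_0}}$ for a convenient $p_0>1$, derive from it bounds on $\|u(\cdot,t)\|_{W^{1,q}}$ and on $\|\nabla c(\cdot,t)\|_{L^{r}}$, and feed these back to upgrade the $L^{p_0}$ bound on $n$ to arbitrary $L^p$.

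Once $\|n(\cdot,t)\|_{L^p}$ is controlled for all finite $p$ (locally uniformly in time, and — when $\alpha>-1,\kappa=0$ or $\alpha>-\tfrac12,\kappa\in\mathbb{R}$ — uniformly in time thanks to the genuinely absorptive terms $-nm$ and $-c$), a Moser-type iteration or a variation-of-constants estimate for the $n$-equation gives $\|n(\cdot,t)\|_{L^\infty}\le C$; parabolic $L^p$- and Schauder-type regularity for the $c$-, $m$- and $u$-equations then supplies the Hölder bounds that rule out blow-up through the extensibility criterion, proving global classical solvability, and the uniform version of these bounds yields \dref{1.163072xggsssttyyu}. I expect the main obstacle to be precisely the closing of the coupled estimate in step three for negative $\alpha$: there the chemotactic flux $nS(n)\nabla c$ is superlinear in $n$, so one must borrow just enough regularity of $\nabla c$ and of $u$ — both themselves controlled only through $n$ — and the balance of the corresponding Gagliardo--Nirenberg exponents in two dimensions is exactly what dictates the dichotomy $\alpha>-1$ versus $\alpha\ge-\tfrac12$.
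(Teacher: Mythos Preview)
Your high-level scheme is right, but the concrete testing step you describe would not reach $\alpha>-1$ in the Stokes case. After multiplying by $n^{p-1}$ you propose to apply Young's inequality directly and arrive at $\int_\Omega n^{p-2\alpha}|\nabla c|^2$; however, to absorb this by Gagliardo--Nirenberg into the dissipation $\int_\Omega n^{p-2}|\nabla n|^2\simeq\int_\Omega|\nabla n^{p/2}|^2$ (which, together with the $L^1$-bound on $n$, controls $\int_\Omega n^{p+1}$) you would need $p-2\alpha<p+1$, i.e.\ $\alpha>-\tfrac12$, and for $\alpha\le-\tfrac12$ the Young split you suggest has no admissible pair of exponents. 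This is exactly the barrier that previous papers hit, and your bootstrap loop does not by itself remove it: even with $\nabla c\in L^r$ for all $r<\infty$, the H\"older/Young bookkeeping on $\int_\Omega n^{p-2\alpha}|\nabla c|^2$ still forces $\alpha>-\tfrac12$.

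The paper breaks this barrier by a different treatment of the cross term: instead of applying Young at once, it integrates by parts a second time,
\[
(p-1)\int_\Omega n^{p-1}S(n)\nabla n\cdot\nabla c
=\int_\Omega\nabla\Bigl[\int_0^{n}\tau^{p-1}S(\tau)\,d\tau\Bigr]\cdot\nabla c
=-\int_\Omega\Bigl[\int_0^{n}\tau^{p-1}S(\tau)\,d\tau\Bigr]\Delta c,
\]
which is bounded by $C\int_\Omega n^{p+(-\alpha)_+}|\Delta c|$; Young against $\int_\Omega n^{p+1}$ then leaves a factor $\int_\Omega|\Delta c|^{(p+1)/(1-(-\alpha)_+)}$, which is finite precisely for $\alpha>-1$. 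The crucial additional ingredient is that $\int_t^{t+\tau}\|\Delta c\|_{L^q}^q$ is controlled via maximal parabolic $L^q$-regularity for the $c$-equation, whose right-hand side $m-u\cdot\nabla c$ is harmless because $m\in L^\infty$, $u\in L^p$ for all $p$ (Stokes semigroup acting on the $L^1$-bounded force $(n+m)\nabla\phi$), and $\|\nabla c\|_{L^{3q/2}}$ is interpolated between $\|\Delta c\|_{L^q}$ and $\|c\|_{L^\infty}$ with exponent strictly below~$1$. This $\Delta c$-route is the mechanism that lowers the threshold from $-\tfrac12$ to $-1$; your proposal lacks it. For the Navier--Stokes case the paper also starts not from $n^{p-1}$ but from the logarithmic entropy $-\int_\Omega\ln(n+1)$, coupled via a Moser--Trudinger inequality to $\int_\Omega|u|^2$ and $\int_\Omega|\nabla c|^2$; only after this yields space-time control of $\int\!\!\int|\nabla u|^2$ and $\int\!\!\int|\nabla c|^4$ does one upgrade to $n^p$-testing, and the borderline $\alpha=-\tfrac12$ requires a separate $(n+1)\ln(n+1)$ argument.
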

\begin{remark}
(i) From Theorem \ref{theossddrem3}, we conclude that $\alpha>-1,\kappa = 0$ or  $\alpha>-\frac{1}{2},\kappa \in\mathbb{R},$ is sufficient to guarantee the existence of global and bounded  solutions. This  extend  the results of \cite{Wanssssssg21215}, which provided the existence of a global and bounded
solution for $\alpha>-\frac{1}{2},\kappa = 0$.

(ii) Obviously, $-1<-\frac{1}{2}$,  Theorem \ref{theossddrem3} improves  the results of  Wang-Zhang-Zheng \cite{Wanssssssg21215},  who showed the global classical existence of solutions for \dref{1.dddduiikkldffdffg1}, \dref{ccvvx1.73142sdd6677gg}--\dref{ccvvx1.ddfff731426677gg} in
 the cases $S$  a tensor-valued function with $\alpha> -\frac{1}{2}$.

(iii) Since, $ -1<0$,  Theorem \ref{theossddrem3} also improves  the results of Espejo and Winkler \cite{EspejojjEspejojainidd793},  who showed the global classical existence of solutions for system  \dref{1.dddduiikkldffdffg1}, \dref{ccvvx1.73142sdd6677gg}--\dref{ccvvx1.ddfff731426677gg} in
the cases $S(n)=C_S(1+n)^{-\alpha}$  with $\alpha=0$.

\end{remark}

\section{Preliminaries}

In this Section, we give some basic results as a preparation for the
arguments in the later sections. Firstly, in what follows, without confusion, we
shall abbreviate $\int_\Omega f dx$ as $\int_\Omega f $ for simplicity. Moreover, we shall use $c_i$ for $C_i$ ($i =1, 2, 3,\ldots$) to denote a generic constant which may vary in the context.

Before going further, we list some Lemmas, which will be used throughout this paper.
Firstly, let us give  the following statement on local well-posedness, along with a convenient
extensibility criterion.
%
%
%
%

\begin{lemma}\label{lemma70}
Assume \dref{x1.73142vghf48rtgyhu}--\dref{x1.73142vghf48gg} and \dref{dd1.1fghyuisdakkkllljjjkk}--\dref{ccvvx1.731426677gg} hold.  Then 
there exist $T_{max}\in(0,\infty]$, and a uniquely determined function quintuple $(n , c , m , u ,P )$ with  $n \geq0,c >0$ and $w  > 0$
in $\Omega \times[0, T_{max})$ solves \dref{1.dddduiikkldffdffg1}, \dref{ccvvx1.73142sdd6677gg}--\dref{ccvvx1.ddfff731426677gg}  in the classical sense and satisfies
%
%
$$
 \left\{\begin{array}{ll}
 n \in C^0(\bar{\Omega}\times[0,T_{max} ))\cap C^{2,1}(\bar{\Omega}\times(0,T_{max} )),\\
  c \in  C^0(\bar{\Omega}\times[0,T_{max} ))\cap C^{2,1}(\bar{\Omega}\times(0,T_{max} )),\\
    m \in  C^0(\bar{\Omega}\times[0,T_{max} ))\cap C^{2,1}(\bar{\Omega}\times(0,T_{max} )),\\
  u \in  C^0(\bar{\Omega}\times[0,T_{max} ); \mathbb{R}^2)\cap C^{2,1}(\bar{\Omega}\times(0,T_{max} ); \mathbb{R}^2),\\
  P \in  C^{1,0}(\bar{\Omega}\times(0,T_{max} )).
   \end{array}\right.
 $$
Moreover, if $T_{max}<\infty$, then
$$\limsup_{t\nearrow T_{max}}\bigl(\|n (\cdot, t)\|_{L^\infty(\Omega)}+\|c (\cdot, t)\|_{W^{1,\infty}(\Omega)}+\|m (\cdot, t)\|_{W^{1,\infty}(\Omega)}+\|A^\gamma u (\cdot, t)\|_{L^{2}(\Omega)}\bigr)=\infty,
$$
where $\gamma$ is given by \dref{ccvvx1.731426677gg}.
\end{lemma}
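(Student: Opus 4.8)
The plan is to run a standard Banach fixed-point argument in a suitable parabolic Hölder/Sobolev space, followed by a bootstrap to obtain classical regularity, and then to establish the extensibility criterion by a contradiction argument based on regularity theory. First I would fix $T>0$ small and, for a given bounded $u$ in an appropriate space (say $u\in C^0([0,T];D(A^\gamma))$ with $\gamma\in(\tfrac12,1)$, so that $D(A^\gamma)\hookrightarrow L^\infty(\Omega)$ and in fact $\hookrightarrow C^0(\bar\Omega)$ with good control of $\nabla u$ in $L^q$), treat the chemotaxis-consumption triple $(n,c,m)$: one writes the $n$-equation in the Duhamel form $n(t)=e^{t\Delta}n_0-\int_0^t e^{(t-s)\Delta}\nabla\cdot\bigl(n S(n)\nabla c + nu\bigr)\,ds-\int_0^t e^{(t-s)\Delta}(nm)\,ds$, and similarly for $c$ (with source $m$ and sink $-c$) and $m$ (with absorption $-nm$), then couples with the Stokes equation $u(t)=e^{-tA}u_0+\int_0^t e^{-(t-s)A}\mathbb{P}\bigl[(n+m)\nabla\phi-\kappa(u\cdot\nabla)u\bigr]\,ds$, where $\mathbb{P}$ is the Helmholtz projection. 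Using the smoothing estimates for the Neumann heat semigroup $e^{t\Delta}$ and for $e^{-tA}$ (fractional-power and gradient estimates of the form $\|A^\gamma e^{-tA}\mathbb{P}\,f\|_{L^2}\le Ct^{-\gamma-n/4+n/(2p)}\|f\|_{L^p}$, and $\|\nabla e^{t\Delta}f\|_{L^q}\le Ct^{-1/2-n/2(1/r-1/q)}\|f\|_{L^r}$), one shows that the combined solution map is a contraction on a closed ball of $\bigl(C^0([0,T];C^0(\bar\Omega))\bigr)^3\times C^0([0,T];D(A^\gamma))$ for $T$ sufficiently small, provided $\alpha\ge 0$ so that $|S(n)|\le C_S$ is bounded (hence $nS(n)\nabla c$ is controlled linearly in $n$); the sign condition $S\ge0$ and positivity of $c_0,m_0$ are not needed for existence but enter through the maximum principle to give $n\ge0$, $c>0$, $m>0$.

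Once a mild solution is obtained on $[0,T_{\max})$, the next step is the bootstrap to classical regularity: given that $n,c,m\in C^0(\bar\Omega\times[0,T_{\max}))$ and $u\in C^0([0,T_{\max});D(A^\gamma))$, standard parabolic $L^p$ estimates applied to the $c$- and $m$-equations upgrade $\nabla c,\nabla m$ to $L^\infty$ and then to Hölder spaces; feeding this into the $n$-equation (whose drift $S(n)\nabla c+u$ is then Hölder continuous), parabolic Schauder theory gives $n\in C^{2+\vartheta,1+\vartheta/2}_{loc}$; Schauder theory for the Stokes system then yields $u\in C^{2+\vartheta,1+\vartheta/2}_{loc}$ and $P\in C^{1+\vartheta,\vartheta/2}_{loc}$, and a final pass closes the regularity loop so that all equations hold classically in $\Omega\times(0,T_{\max})$. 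Uniqueness follows from the contraction property on each small time interval together with a continuation/connectedness argument.

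For the extensibility criterion I would argue by contradiction: suppose $T_{\max}<\infty$ but $M:=\limsup_{t\nearrow T_{\max}}\bigl(\|n(\cdot,t)\|_{L^\infty}+\|c(\cdot,t)\|_{W^{1,\infty}}+\|m(\cdot,t)\|_{W^{1,\infty}}+\|A^\gamma u(\cdot,t)\|_{L^2}\bigr)<\infty$. Then all four quantities are bounded uniformly on $[0,T_{\max})$; picking any $t_0<T_{\max}$ close to $T_{\max}$ and running the fixed-point construction with initial data $(n,c,m,u)(\cdot,t_0)$ — noting that the guaranteed existence time depends only on the above norms of the data (and on fixed quantities like $C_S$, $\phi$, $\Omega$) — one obtains a solution on $[t_0,t_0+\tau]$ with $\tau>0$ independent of $t_0$; choosing $t_0>T_{\max}-\tau$ extends the solution beyond $T_{\max}$, contradicting maximality. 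The main obstacle, and the place requiring genuine care rather than routine citation, is the uniform-in-$t_0$ control of the existence time: one must verify that the contraction constants in the Duhamel estimates for all four components depend on the initial data \emph{only} through the norms appearing in the criterion — in particular that $\|A^\gamma u(\cdot,t_0)\|_{L^2}$ (rather than a higher norm) suffices to control the Navier-Stokes convection term $\kappa(u\cdot\nabla)u$ over the short interval, which is exactly where the restriction $\gamma\in(\tfrac12,1)$ is used, via $D(A^\gamma)\hookrightarrow W^{1,q}$ for suitable $q>2$ and the associated bilinear estimate $\|\mathbb{P}(u\cdot\nabla u)\|_{L^p}\lesssim \|u\|_{D(A^\gamma)}^2$.
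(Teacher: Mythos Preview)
The paper does not actually supply a proof of this lemma; it is stated as a standard local well-posedness/extensibility result and left without argument (this is typical in the chemotaxis--fluid literature, where one refers to the constructions in, e.g., Winkler \cite{Winkler31215}, Espejo--Winkler \cite{EspejojjEspejojainidd793}, or \cite{Wanssssssg21215}). Your proposal is essentially the standard route such references take: a Banach fixed-point argument in mild form on a short interval, followed by a parabolic/Stokes Schauder bootstrap to classical regularity, and the extensibility criterion via the observation that the local existence time depends on the data only through the norms appearing in the blow-up alternative. In that sense your approach agrees with what the paper implicitly invokes.

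One point deserves correction. You write ``provided $\alpha\ge 0$ so that $|S(n)|\le C_S$ is bounded''; but the paper's main theorems explicitly allow $\alpha<0$ (indeed $\alpha>-1$ or $\alpha\ge -\tfrac12$), so Lemma~\ref{lemma70} must cover that range as well. This is not a genuine obstruction for \emph{local} existence: in the fixed-point argument one works inside a closed ball $\{\|n\|_{L^\infty_{t,x}}\le R\}$ with, say, $R=\|n_0\|_{L^\infty}+1$, and on this ball the hypothesis \dref{x1.73142vghf48rtgyhu} (smoothness of $S$) already gives $|S(n)|\le \sup_{0\le s\le R}|S(s)|<\infty$, independent of the sign of $\alpha$. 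The growth condition \dref{x1.73142vghf48gg} plays no role at the local stage; it enters only in the global a~priori estimates of Section~3. With this adjustment the contraction and extensibility arguments you outline go through unchanged.
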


In deriving some preliminary time-independent estimates for $n$ as well as $c$ and $u$, we shall make use of an
auxiliary statement on boundedness in a linear differential inequality, which has been proved by
\cite{Zheklllkkkkllllndsssdddgssddsddfff00}.

\begin{lemma}\label{lemma630} (Lemma 2.3 of \cite{Zheklllkkkkllllndsssdddgssddsddfff00})
Let $T>0$, $\tau\in(0,T)$, $A>0,\alpha>0$ and $B>0$, and suppose that $y:[0,T)\rightarrow[0,\infty)$ is absolutely continuous such that
\begin{equation}\label{x1.73142hjkl}
\begin{array}{ll}
\displaystyle{
 y'(t)+Ay^\alpha(t)\leq h(t)}~~\mbox{for a.e.}~~t\in(0,T)\\
\end{array}
\end{equation}
with some nonnegative function $h\in  L^1_{loc}([0, T))$ satisfying
$$
\int_{t}^{t+\tau}h(s)ds\leq B~~\mbox{for all}~~t\in(0,T-\tau).
$$
Then  there exists a positive constant $C$ 
 such that
\begin{equation}\label{x1.731ddfff42hjkl}
y(t)\leq\max\left\{y_0+B,\frac{1}{\tau^{\frac{1}{\alpha}}}(\frac{B}{A})^{\frac{1}{\alpha}}+2B\right\}~~\mbox{for all}~~t\in(0,T).
\end{equation}
\end{lemma}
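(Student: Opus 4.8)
The plan is to establish the explicit bound by a contradiction argument that splits the time axis into a short initial window $(0,\tau]$ and a later regime $t>\tau$, playing the absorption term $Ay^\alpha$ against the window-integrated forcing. Write $y_0:=y(0)$ and introduce the two competing thresholds $M_1:=y_0+B$ and $M_2:=y^\ast+2B$, where $y^\ast:=\bigl(\frac{B}{A\tau}\bigr)^{1/\alpha}=\frac{1}{\tau^{1/\alpha}}(\frac{B}{A})^{1/\alpha}$. The level $y^\ast$ is chosen precisely so that $A(y^\ast)^\alpha\,\tau=B$, i.e. the dissipation produced by holding $y$ at height $y^\ast$ across one window of length $\tau$ exactly balances the admissible forcing $B$. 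The goal is to show $y(t)\le M:=\max\{M_1,M_2\}$ for all $t\in(0,T)$.

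First I would dispose of the base regime. For $t\in(0,\tau]$, discard the nonnegative absorption term to obtain $y'(s)\le h(s)$, integrate over $(0,t)$, and invoke the window hypothesis at the left endpoint (legitimate by letting the lower limit tend to $0$ and using $h\in L^1_{loc}$) to get $\int_0^{t}h\le\int_0^\tau h\le B$. This yields $y(t)\le y_0+B=M_1\le M$ throughout $(0,\tau]$.

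The core is a dichotomy applied to each trailing window. Fix $t>\tau$ and examine $[t-\tau,t]$. Either (Case A) there is a point $s_0$ in this window with $y(s_0)\le y^\ast+B$, in which case dropping the absorption term and integrating from $s_0$ to $t$, together with $\int_{s_0}^{t}h\le\int_{t-\tau}^{t}h\le B$, gives $y(t)\le y^\ast+2B=M_2$; or (Case B) $y(s)>y^\ast+B>y^\ast$ throughout $[t-\tau,t]$, so that $Ay^\alpha(s)>A(y^\ast)^\alpha=B/\tau$ pointwise, whence $\int_{t-\tau}^{t}Ay^\alpha>B\ge\int_{t-\tau}^{t}h$. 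Integrating the full inequality $y'+Ay^\alpha\le h$ over the window then forces $y(t)<y(t-\tau)$, with moreover $y(t-\tau)>y^\ast+B$. I would then assemble these into a backward induction: assuming for contradiction that $y(t_0)>M$ at some $t_0$, the base estimate forces $t_0>\tau$, and $y(t_0)>M_2$ rules out Case A, so Case B applies and gives $y(t_0-\tau)>y(t_0)>M$. Iterating in steps of $\tau$ produces times $t_0>t_0-\tau>\cdots$ along which $y$ is strictly increasing and stays above $M$, and after finitely many steps the argument lands at some $t_\ast\in(0,\tau]$ with $y(t_\ast)>y(t_0)>M\ge M_1$, contradicting the base estimate $y(t_\ast)\le M_1$. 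Hence $y\le M$ everywhere.

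The main obstacle, and the point needing the most care, is the calibration and bookkeeping of this backward iteration: one must check that each backward step stays admissible (the window $[s-\tau,s]$ remains inside $(0,T)$, so the integral bound applies, including the limiting boundary value at $t=0$), that the strict monotonicity $y(t_j)<y(t_{j+1})$ genuinely propagates the strict inequality $y>M$ all the way down to the initial window, and that the threshold $y^\ast$ is exactly the break-even level rendering Case B dissipative. The remaining computations — integrating $y'\le h$ and estimating $\int Ay^\alpha$ from below — are entirely routine.
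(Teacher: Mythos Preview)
Your argument is correct. The calibration $y^\ast=(B/(A\tau))^{1/\alpha}$ so that $A(y^\ast)^\alpha\tau=B$ is exactly right, the base estimate on $(0,\tau]$ is sound (the limit $t\downarrow0$ in the window hypothesis is justified by $h\in L^1_{loc}$ and dominated convergence), and the Case~A/Case~B dichotomy together with the backward $\tau$-stepping yields the contradiction cleanly: each application of Case~B produces $y(t_{j+1})>y(t_j)>M$, and since $t_0<T<\infty$ the descent terminates after finitely many steps at some $t_k\in(0,\tau]$, where the base bound $y(t_k)\le M_1$ is violated.

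There is nothing to compare against here: the paper does not prove this lemma but simply quotes it as Lemma~2.3 of the cited reference. Your proof is the standard one for ODE comparison lemmas of this type and would serve perfectly well as a self-contained replacement for the citation.
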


\section{A-estimates for  Problem \dref{1.dddduiikkldffdffg1}, \dref{ccvvx1.73142sdd6677gg}--\dref{ccvvx1.ddfff731426677gg}  }
In this section, we focus on the some useful  estimates for system  \dref{1.dddduiikkldffdffg1}, \dref{ccvvx1.73142sdd6677gg}--\dref{ccvvx1.ddfff731426677gg},
 which  plays an important role in obtaining the global existence of solutions for \dref{1.dddduiikkldffdffg1}, \dref{ccvvx1.73142sdd6677gg}--\dref{ccvvx1.ddfff731426677gg}.
%
To begin with, we proceed to establish some basic  a-priori estimates for $n , c $ as well as $m $ and $u $,
 which have already been proved and  used
in \cite{Wanssssssg21215} (see also in \cite{EspejojjEspejojainidd793}). And therefore, we list them here without proof.

\begin{lemma}\label{fvfgsdfggfflemma45}
Suppose that \dref{x1.73142vghf48rtgyhu}--\dref{x1.73142vghf48gg} and \dref{dd1.1fghyuisdakkkllljjjkk}--\dref{ccvvx1.731426677gg} hold. Then for all $t \in (0,T_{max} )$, the solution
of \dref{1.dddduiikkldffdffg1}, \dref{ccvvx1.73142sdd6677gg}--\dref{ccvvx1.ddfff731426677gg} from Lemma \ref{lemma70} satisfies
%
%
\begin{equation}
\|n (\cdot,t)\|_{L^1(\Omega)}\leq\|n_{0}\|_{L^1(\Omega)}~~~~\mbox{for all}~~ t\in(0, T_{max}),
\label{ddfgczhhhh2.5ghjjjssddju48cfg924ghyuji}
\end{equation}
\begin{equation}
\frac{d}{dt}\int_{\Omega}{n  }\leq0~~~~\mbox{for all}~~ t\in(0, T_{max}),
\label{ddfgczhhhh2.ssss5ghjjjssddju48cfg924ghyuji}
\end{equation}
\begin{equation}
\|c (\cdot,t)\|_{L^\infty(\Omega)}\leq M_*:=\max\{\|c_{0}\|_{L^\infty(\Omega)},\|m_{0}\|_{L^\infty(\Omega)}\}~~\mbox{for all}~~ t\in(0, T_{max}),
\label{ddfgczhhhh2.5sddddghju48cfg924ghyuji}
\end{equation}
\begin{equation}
\|m  (\cdot,t)\|_{L^\infty(\Omega)}\leq \|m_{0}\|_{L^\infty(\Omega)}~~\mbox{for all}~~ t\in(0, T_{max}),
\label{ddfgczhhhh2.5ghju48cfg924ghyuji}
\end{equation}
\begin{equation}
\int_{\Omega}{n  }-\int_{\Omega}{m }= \int_{\Omega}{n_0}-\int_{\Omega}{m_0}~~\mbox{for all}~~ t\in(0, T_{max})
\label{sssddfgczhhhh2.5ghju48cfg924ghyuji}
\end{equation}
as well as
\begin{equation}
\|m (\cdot,t)\|_{L^2(\Omega)}^2+2\int_0^{t}\int_{\Omega}{|\nabla m |^{2}}\leq \|m_{0}\|_{L^2(\Omega)}^2~~\mbox{for all}~~ t\in(0, T_{max})
\label{ddczhjjjj2.5ghju48cfg9ssdd24}
\end{equation}
and
\begin{equation}
\int_0^{t}\int_{\Omega}{n m }dxds\leq \min\{\|n_{0}\|_{L^1(\Omega)},\|m_{0}\|_{L^1(\Omega)}\}~~\mbox{for all}~~ t\in(0, T_{max}).
\label{ddczhjjjj2.5ghxxccju48cfg9ssdd24}
\end{equation}
\end{lemma}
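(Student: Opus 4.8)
The plan is to obtain all seven estimates by elementary testing of the individual equations in \dref{1.dddduiikkldffdffg1}, systematically exploiting $\nabla\cdot u=0$ and the homogeneous boundary data in \dref{ccvvx1.ddfff731426677gg}, together with the sign properties $n\geq0$, $c>0$, $m>0$ supplied by Lemma \ref{lemma70}.

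First I would integrate the $n$-equation over $\Omega$: since $u=0$ on $\partial\Omega$ and $\nabla\cdot u=0$ one has $\int_\Omega u\cdot\nabla n=\int_\Omega\nabla\cdot(nu)=0$, while the no-flux condition in \dref{ccvvx1.ddfff731426677gg} annihilates the combined diffusion–taxis flux, i.e.\ $\int_\Omega\Delta n-\int_\Omega\nabla\cdot(nS(n)\nabla c)=\int_{\partial\Omega}(\nabla n-nS(n)\nabla c)\cdot\nu=0$. Hence $\frac{d}{dt}\int_\Omega n=-\int_\Omega nm\leq0$ because $n,m\geq0$, which is \dref{ddfgczhhhh2.ssss5ghjjjssddju48cfg924ghyuji}, and integrating in time yields \dref{ddfgczhhhh2.5ghjjjssddju48cfg924ghyuji}. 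The identical manipulation for the $m$-equation gives $\frac{d}{dt}\int_\Omega m=-\int_\Omega nm$, so that $\frac{d}{dt}(\int_\Omega n-\int_\Omega m)=0$, i.e.\ \dref{sssddfgczhhhh2.5ghju48cfg924ghyuji}; moreover integrating $\frac{d}{dt}\int_\Omega n=-\int_\Omega nm$ from $0$ to $t$ produces $\int_0^t\int_\Omega nm=\int_\Omega n_0-\int_\Omega n(\cdot,t)\leq\|n_0\|_{L^1(\Omega)}$, while the analogous computation based on the $m$-equation bounds the same quantity by $\|m_0\|_{L^1(\Omega)}$, which together give \dref{ddczhjjjj2.5ghxxccju48cfg9ssdd24}.

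For the pointwise bounds I would invoke the parabolic comparison principle. Since $-nm\leq0$, the constant $\|m_0\|_{L^\infty(\Omega)}$ is a supersolution of $m_t+u\cdot\nabla m-\Delta m+nm=0$, which gives \dref{ddfgczhhhh2.5ghju48cfg924ghyuji}, with positivity of $m$ following from $m_0>0$ via the strong maximum principle. Inserting this into the $c$-equation, the constant $M_*=\max\{\|c_0\|_{L^\infty(\Omega)},\|m_0\|_{L^\infty(\Omega)}\}$ satisfies $(M_*)_t+u\cdot\nabla M_*-\Delta M_*+M_*=M_*\geq\|m_0\|_{L^\infty(\Omega)}\geq m$, hence is a supersolution of $c_t+u\cdot\nabla c-\Delta c+c=m$, which yields \dref{ddfgczhhhh2.5sddddghju48cfg924ghyuji}. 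Finally, for \dref{ddczhjjjj2.5ghju48cfg9ssdd24} I would test the $m$-equation with $m$: the convective term vanishes since $\int_\Omega(u\cdot\nabla m)m=\frac12\int_\Omega u\cdot\nabla(m^2)=0$, integration by parts gives $\int_\Omega m\Delta m=-\int_\Omega|\nabla m|^2$, and $-\int_\Omega nm^2\leq0$, so $\frac12\frac{d}{dt}\int_\Omega m^2+\int_\Omega|\nabla m|^2\leq0$; integrating over $(0,t)$ completes the proof.

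I do not anticipate a genuine obstacle here: the only points deserving mild care are the legitimacy of applying the comparison principle in the presence of the a priori merely locally bounded drift $u$ on $(0,T_{max})$---which is covered by the regularity asserted in Lemma \ref{lemma70}---and the verification that all boundary integrals vanish, guaranteed by \dref{ccvvx1.ddfff731426677gg} and $\nabla\cdot u=0$. Since these are precisely the estimates already established in \cite{Wanssssssg21215} and \cite{EspejojjEspejojainidd793}, nothing beyond this routine bookkeeping is needed.
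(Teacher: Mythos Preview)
Your proposal is correct and follows the standard route: integrate the $n$- and $m$-equations to obtain the mass identities and \dref{ddczhjjjj2.5ghxxccju48cfg9ssdd24}, apply the parabolic comparison principle for the $L^\infty$ bounds on $m$ and $c$, and test the $m$-equation with $m$ for the energy estimate \dref{ddczhjjjj2.5ghju48cfg9ssdd24}. The paper itself omits the proof entirely, simply citing \cite{Wanssssssg21215} and \cite{EspejojjEspejojainidd793}, so there is nothing to compare against beyond noting that your argument is precisely the elementary computation those references carry out.
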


As a main ingredient for the proof of Lemma \ref{lemma4563025xxhjkloghyui} as well as \ref{789lemma4563025xxhjkloghyui} and Lemma \ref{9999lemma4563ddfff025xxhjkloghyui} below, let us separately state the following
the auxiliary interpolation lemma. 


\begin{lemma}\label{llllplemkklllma4563ddfff025xxhjkloghyui}
Assume that $\Omega\subset \mathbb{R}^2$ is a bounded and smooth domain, and that $p> 1$.
Then
there exists $C_\Omega > 0$ and $\tilde{C}_\Omega > 0$  such that for all $\varphi\in C^2(\bar{\Omega})$ with $\partial_\nu \varphi= 0$,
\begin{equation}\label{ddddhddddjui9sss09klllllopji115}
\disp{\int_{\Omega} |\nabla \varphi|^{2p+2}\leq C_\Omega\int_{\Omega} |\nabla \varphi|^{2p-2}|D^2 \varphi|^2\|\varphi\|_{L^{\infty}(\Omega)}^{2}+\tilde{C}_\Omega\|\varphi\|_{L^{\infty}(\Omega)}^{2p+2}.}
\end{equation}
Moreover,
for any $\delta> 0$, there exists $C(\delta) > 0$ such that for all $\varphi\in C^2(\bar{\Omega})$ with $\partial_\nu \varphi= 0$,
\begin{equation}\label{hddddjui9sss09klllllopji115}
\disp{\int_{\partial\Omega} |\nabla \varphi|^{2p-2}\frac{\partial  |\nabla c |^{2}}{\partial\nu}\leq \delta\int_{\Omega} |\nabla \varphi|^{2p-2}|D^2 \varphi|^2+C(\delta)\|\varphi\|_{L^{\infty}(\Omega)}^{2p}.}
\end{equation}
\end{lemma}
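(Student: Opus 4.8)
\textbf{Proof proposal for Lemma \ref{llllplemkklllma4563ddfff025xxhjkloghyui}.}

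The plan is to obtain both inequalities by integration by parts together with the pointwise identity $|\nabla\varphi|^{2p+2} = |\nabla\varphi|^{2p}\cdot|\nabla\varphi|^2$ and the divergence theorem. For the first inequality \dref{ddddhddddjui9sss09klllllopji115}, I would write $\int_\Omega |\nabla\varphi|^{2p+2} = \int_\Omega \nabla\varphi\cdot\nabla\varphi\, |\nabla\varphi|^{2p}$ and integrate by parts once, moving the divergence onto the product $\varphi\,|\nabla\varphi|^{2p}\nabla\varphi$ is not quite right; instead I integrate by parts to transfer a derivative off one of the gradient factors, obtaining a boundary term $\int_{\partial\Omega}\varphi\,|\nabla\varphi|^{2p}\,\partial_\nu\varphi$ which \emph{vanishes} by the Neumann condition $\partial_\nu\varphi=0$, plus interior terms of the form $\int_\Omega \varphi\,|\nabla\varphi|^{2p}\Delta\varphi$ and $\int_\Omega \varphi\,|\nabla\varphi|^{2p-2}\,(\nabla\varphi)^T D^2\varphi\,\nabla\varphi$ (the latter coming from differentiating $|\nabla\varphi|^{2p}$). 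Both interior terms are bounded by $C\|\varphi\|_{L^\infty}\int_\Omega |\nabla\varphi|^{2p}|D^2\varphi|$ since $|\Delta\varphi|\le \sqrt2\,|D^2\varphi|$ in two dimensions. Then I apply Young's inequality to split $\int_\Omega |\nabla\varphi|^{2p}|D^2\varphi| = \int_\Omega \big(|\nabla\varphi|^{p-1}|D^2\varphi|\big)\cdot\big(|\nabla\varphi|^{p+1}\big)\le \tfrac12\int_\Omega |\nabla\varphi|^{2p-2}|D^2\varphi|^2 + \tfrac12\int_\Omega |\nabla\varphi|^{2p+2}$; unfortunately this produces $\int_\Omega|\nabla\varphi|^{2p+2}$ with a coefficient too large to absorb directly, so instead I use the weighted Young inequality $ab\le \varepsilon a^2 + \tfrac1{4\varepsilon}b^2$ with the weight chosen so that the resulting $\int_\Omega|\nabla\varphi|^{2p+2}$ term carries the factor $C\|\varphi\|_{L^\infty}\cdot$(something), and reorganize. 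The cleaner route, which I will actually follow, is to bound $\int_\Omega |\nabla\varphi|^{2p+2}\le C\|\varphi\|_{L^\infty}\int_\Omega |\nabla\varphi|^{2p}|D^2\varphi|$ first, then apply Young's inequality in the form $|\nabla\varphi|^{2p}|D^2\varphi| \le \eta\,\|\varphi\|_{L^\infty}^{-1}|\nabla\varphi|^{2p+2} + C_\eta\|\varphi\|_{L^\infty}|\nabla\varphi|^{2p-2}|D^2\varphi|^2$ — no, the exponents don't match; the correct Young split is $|\nabla\varphi|^{2p}|D^2\varphi| = \big(|\nabla\varphi|^{p-1}|D^2\varphi|\big)^{2/(p+1)\cdot(p+1)/2}$... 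Rather than belabor the bookkeeping here, the essential point is: integrate by parts, kill the boundary term, dominate the two interior terms by $C\|\varphi\|_{L^\infty}\int_\Omega|\nabla\varphi|^{2p}|D^2\varphi|$, then use Young's inequality with exponents $\tfrac{p+1}{p}$ and $p+1$ to write this as $\varepsilon\int_\Omega|\nabla\varphi|^{2p+2} + C(\varepsilon)\|\varphi\|_{L^\infty}^{p+1}\big(\int_\Omega|\nabla\varphi|^{2p-2}|D^2\varphi|^2\big)^{?}$ — and here a further interpolation/Hölder step is needed to land exactly on the form in \dref{ddddhddddjui9sss09klllllopji115}, namely splitting the power of $\|\varphi\|_{L^\infty}$ between the two terms on the right. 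I would take $\varepsilon$ small enough to absorb the first term into the left-hand side.

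For the second inequality \dref{hddddjui9sss09klllllopji115}, the starting point is the standard pointwise boundary identity for domains with smooth boundary: for $\varphi$ with $\partial_\nu\varphi=0$ on $\partial\Omega$, one has $\partial_\nu |\nabla\varphi|^2 \le 2\kappa_{\partial\Omega}|\nabla\varphi|^2$ on $\partial\Omega$, where $\kappa_{\partial\Omega}$ bounds the curvature of $\partial\Omega$ (this is the classical lemma used pervasively in chemotaxis literature, e.g. in Winkler's work and in \cite{Zheklllkkkkllllndsssdddgssddsddfff00}). Hence $\int_{\partial\Omega}|\nabla\varphi|^{2p-2}\partial_\nu|\nabla\varphi|^2 \le 2\kappa_{\partial\Omega}\int_{\partial\Omega}|\nabla\varphi|^{2p}$. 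Then I reduce the boundary integral to an interior one via the trace inequality $\|\psi\|_{L^1(\partial\Omega)}\le C\|\psi\|_{W^{1,1}(\Omega)}$ applied to $\psi = |\nabla\varphi|^{2p}$, giving $\int_{\partial\Omega}|\nabla\varphi|^{2p} \le C\int_\Omega|\nabla\varphi|^{2p} + C\int_\Omega|\nabla\varphi|^{2p-1}|D^2\varphi|$. The second interior term is split by Young's inequality as $\delta'\int_\Omega|\nabla\varphi|^{2p-2}|D^2\varphi|^2 + C(\delta')\int_\Omega|\nabla\varphi|^{2p}$, and the remaining pure-gradient term $\int_\Omega|\nabla\varphi|^{2p}$ is controlled using \dref{ddddhddddjui9sss09klllllopji115} (already proved): by Young again, $\int_\Omega|\nabla\varphi|^{2p}\le \delta''\int_\Omega|\nabla\varphi|^{2p+2} + C\|\varphi\|_{L^\infty}^{?}$, and then \dref{ddddhddddjui9sss09klllllopji115} converts $\int_\Omega|\nabla\varphi|^{2p+2}$ back into $\delta'''\int_\Omega|\nabla\varphi|^{2p-2}|D^2\varphi|^2 + C\|\varphi\|_{L^\infty}^{2p+2}$, and finally a Young step collapses $\|\varphi\|_{L^\infty}^{2p+2}$ against a lower power to $\|\varphi\|_{L^\infty}^{2p}$ if one permits an additive constant — one must double-check the exact exponent $2p$ on the right of \dref{hddddjui9sss09klllllopji115} is consistent, which it should be after choosing all the small parameters appropriately.

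\textbf{Main obstacle.} The conceptual ingredients are entirely standard — integration by parts, the Neumann boundary condition killing one boundary term, the curvature bound on $\partial_\nu|\nabla\varphi|^2$, the trace theorem, and repeated Young inequalities. The real work, and the only place errors creep in, is the precise juggling of exponents and the distribution of the power of $\|\varphi\|_{L^\infty}$ between the two right-hand terms so that the statement comes out \emph{exactly} as written, with coefficient structure $C_\Omega(\cdots)\|\varphi\|_{L^\infty}^2 + \tilde C_\Omega\|\varphi\|_{L^\infty}^{2p+2}$ in the first and $\delta(\cdots) + C(\delta)\|\varphi\|_{L^\infty}^{2p}$ in the second. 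In particular, obtaining the clean quadratic weight $\|\varphi\|_{L^\infty}^2$ (rather than some other power) multiplying $\int_\Omega|\nabla\varphi|^{2p-2}|D^2\varphi|^2$ in \dref{ddddhddddjui9sss09klllllopji115} is what forces the specific Young-inequality exponents, and getting that matching right is the delicate part; the rest is bookkeeping.
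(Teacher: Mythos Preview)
Your approach is correct and is exactly the standard argument the paper defers to (the paper gives no proof, merely citing \cite{Wangjjk5566ddfggghjjkk1}, \cite{LiLiLiLisssdffssdddddddgssddsddfff00}, and \cite{Ishida}, whose proofs proceed via precisely the integration by parts, the curvature bound $\partial_\nu|\nabla\varphi|^2\le C_{\partial\Omega}|\nabla\varphi|^2$, the trace embedding $W^{1,1}(\Omega)\hookrightarrow L^1(\partial\Omega)$, and Young's inequality that you outline).

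One remark to dissolve your stated ``main obstacle'': you are overcomplicating the exponent bookkeeping in \dref{ddddhddddjui9sss09klllllopji115}. After integrating by parts and using $\partial_\nu\varphi=0$ you obtain, exactly as you write,
\[
\int_\Omega|\nabla\varphi|^{2p+2}\le (2p+\sqrt{2})\,\|\varphi\|_{L^\infty}\int_\Omega|\nabla\varphi|^{2p}|D^2\varphi|.
\]
Now a single Cauchy--Schwarz step (not Young with exotic exponents) suffices: writing the integrand as $|\nabla\varphi|^{p+1}\cdot|\nabla\varphi|^{p-1}|D^2\varphi|$ gives
\[
\int_\Omega|\nabla\varphi|^{2p}|D^2\varphi|\le\Bigl(\int_\Omega|\nabla\varphi|^{2p+2}\Bigr)^{1/2}\Bigl(\int_\Omega|\nabla\varphi|^{2p-2}|D^2\varphi|^2\Bigr)^{1/2},
\]
so with $A:=\int_\Omega|\nabla\varphi|^{2p+2}$ and $B:=\int_\Omega|\nabla\varphi|^{2p-2}|D^2\varphi|^2$ one has $A\le cM\sqrt{AB}$, hence $A\le c^2M^2B$ --- which is \dref{ddddhddddjui9sss09klllllopji115} even \emph{without} the additive term $\tilde C_\Omega\|\varphi\|_{L^\infty}^{2p+2}$. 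The clean quadratic weight $\|\varphi\|_{L^\infty}^2$ thus falls out immediately. For \dref{hddddjui9sss09klllllopji115}, your route via trace plus Young is correct; the leftover term $\int_\Omega|\nabla\varphi|^{2p}$ is handled by H\"older against $|\Omega|$ followed by the bound $A\le c^2M^2B$ just derived and one more weighted Young inequality, which lands exactly on $\delta B+C(\delta)\|\varphi\|_{L^\infty}^{2p}$.
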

\begin{proof}
The proof of this lemma is completely similar to Lemma 3.4 in Ref. \cite{Wangjjk5566ddfggghjjkk1} (see also Lemma 2.1 of \cite{LiLiLiLisssdffssdddddddgssddsddfff00} and \cite{Ishida}), so
we omit it here.
\end{proof}

Our next first purpose will be to improve the regularity information on the chemotactic gradient $c$. As a preliminary step toward this,
Lemmas \ref{fvfgsdfggfflemma45} and \ref{llllplemkklllma4563ddfff025xxhjkloghyui}
 provide a simple functional
$\int_{\Omega} |\nabla c |^{2p}$
for any $p > 1$.

%
%

\begin{lemma}\label{789lemma4563025xxhjkloghyui}
Let ${p}>1$. Then there exists a positive constant $C$ 
such that the solution of \dref{1.dddduiikkldffdffg1}, \dref{ccvvx1.73142sdd6677gg}--\dref{ccvvx1.ddfff731426677gg} satisfies
\begin{equation}\label{789hjui909klopji115}
\begin{array}{rl}
&\disp{\frac{1}{{2{{p}}}}\frac{d}{dt}\|\nabla c \|^{{{2{{p}}}}}_{L^{{2{{p}}}}(\Omega)}
+\frac{{{p}}-1}{2{{{p}}^2}}\int_{\Omega}\left|\nabla |\nabla c |^{{{p}}}\right|^2
+\frac{1}{2}\int_\Omega  |\nabla c |^{2{{p}}-2}|D^2c |^2+\frac{1}{2}\int_{\Omega} |\nabla c |^{2{{p}}}}
\\
\leq&\disp{
\int_\Omega (u \cdot\nabla  c )\nabla\cdot( |\nabla c |^{2{p}-2}\nabla c )+C~~\mbox{for all}~~ t\in(0,T_{max}).}
\end{array}
\end{equation}
\end{lemma}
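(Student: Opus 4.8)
The plan is to test the second equation of \dref{1.dddduiikkldffdffg1} with a suitable function built from $|\nabla c|^{2p-2}$ in order to produce a differential inequality for $\|\nabla c\|_{L^{2p}(\Omega)}^{2p}$. Concretely, I would differentiate $c_t + u\cdot\nabla c = \Delta c - c + m$ in space, multiply by $|\nabla c|^{2p-2}\nabla c$, and integrate over $\Omega$. The left-hand side contributes $\frac{1}{2p}\frac{d}{dt}\int_\Omega |\nabla c|^{2p}$; the diffusion term, after integrating by parts and using the pointwise identity $\nabla c\cdot\nabla\Delta c = \frac12\Delta|\nabla c|^2 - |D^2c|^2$, yields the ``good'' terms $-\frac{p-1}{2p^2}\int_\Omega \bigl|\nabla|\nabla c|^p\bigr|^2 - \int_\Omega |\nabla c|^{2p-2}|D^2c|^2$ together with a boundary contribution $\frac12\int_{\partial\Omega}|\nabla c|^{2p-2}\partial_\nu|\nabla c|^2$; the $-c$ term gives exactly $-\int_\Omega |\nabla c|^{2p}$; the transport term $u\cdot\nabla c$ gives precisely the term kept on the right-hand side of \dref{789hjui909klopji115} after noting $\nabla(u\cdot\nabla c)\cdot|\nabla c|^{2p-2}\nabla c = (u\cdot\nabla c)\,\nabla\cdot(|\nabla c|^{2p-2}\nabla c) + \text{div}(\cdots)$, the divergence term integrating to zero by $\nabla\cdot u=0$ and $u|_{\partial\Omega}=0$; and the $m$ term gives $\int_\Omega \nabla m\cdot|\nabla c|^{2p-2}\nabla c$, which must be absorbed.

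The remaining work is to control the two problematic terms: the boundary integral and the $m$-contribution. For the boundary term I would invoke the second assertion \dref{hddddjui9sss09klllllopji115} of Lemma \ref{llllplemkklllma4563ddfff025xxhjkloghyui} with $\varphi = c$ and a small $\delta$ (say $\delta=\tfrac14$), which bounds $\frac12\int_{\partial\Omega}|\nabla c|^{2p-2}\partial_\nu|\nabla c|^2$ by $\tfrac14\int_\Omega|\nabla c|^{2p-2}|D^2c|^2 + C(\delta)\|c\|_{L^\infty(\Omega)}^{2p}$, and here $\|c\|_{L^\infty(\Omega)}$ is already bounded by $M_*$ from \dref{ddfgczhhhh2.5sddddghju48cfg924ghyuji} in Lemma \ref{fvfgsdfggfflemma45}, so that piece contributes only a constant. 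This leaves a spare $\tfrac14\int_\Omega|\nabla c|^{2p-2}|D^2c|^2$ of dissipation, half of the original $\int_\Omega|\nabla c|^{2p-2}|D^2c|^2$, which is why the stated inequality retains a factor $\tfrac12$ in front of that term. For the $m$-term I would use Young's inequality: $\int_\Omega \nabla m\cdot|\nabla c|^{2p-2}\nabla c \le \varepsilon\int_\Omega|\nabla c|^{2p-2}|D^2c|^2$—no, more carefully, one splits $|\nabla c|^{2p-2}|\nabla m|\,|\nabla c| \le \varepsilon|\nabla c|^{2p} + C_\varepsilon|\nabla c|^{2p-2}|\nabla m|^2$, then the first term is absorbed into $\tfrac12\int_\Omega|\nabla c|^{2p}$ and the second is handled via the first inequality \dref{ddddhddddjui9sss09klllllopji115} of Lemma \ref{llllplemkklllma4563ddfff025xxhjkloghyui} together with the $L^\infty$ bound on $c$ and the space-time $L^2$ bound $\int_0^t\int_\Omega|\nabla m|^2 \le \tfrac12\|m_0\|_{L^2}^2$ from \dref{ddczhjjjj2.5ghju48cfg9ssdd24}; alternatively, since $|\nabla c|^{2p-2}$ can be interpolated against $|D^2c|^2$ by \dref{ddddhddddjui9sss09klllllopji115}, one absorbs into the residual $\tfrac14$-portion of the $|D^2c|^2$-dissipation while paying a constant involving $M_*$.

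The main obstacle I anticipate is the bookkeeping of the boundary term: unlike the fluid-free or whole-space settings, the Neumann condition $\partial_\nu c=0$ does not make $\partial_\nu|\nabla c|^2$ vanish on $\partial\Omega$ (it is instead controlled by the curvature of $\partial\Omega$), so one genuinely needs the trace-interpolation estimate \dref{hddddjui9sss09klllllopji115} and must be careful that the $\delta$ chosen there, plus whatever $\varepsilon$ is spent on the $m$-term, together consume strictly less than half of the $\int_\Omega|\nabla c|^{2p-2}|D^2c|^2$ dissipation, leaving the clean coefficients displayed in \dref{789hjui909klopji115}. Everything else—the integrations by parts, the use of $\nabla\cdot u=0$ and $u|_{\partial\Omega}=0$ to kill the convective divergence term, and the collection of the $L^\infty$-bounds of $c$ into the single constant $C$—is routine. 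Note that the convective term $\int_\Omega(u\cdot\nabla c)\nabla\cdot(|\nabla c|^{2p-2}\nabla c)$ is deliberately \emph{not} estimated here but carried forward to be dealt with in a later lemma once $L^q$-bounds on $u$ are available.
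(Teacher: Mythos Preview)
Your overall strategy is correct and matches the paper: differentiate the $c$-equation, multiply by $|\nabla c|^{2p-2}$, integrate, and sort the resulting terms into good dissipation, a boundary integral handled by Lemma~\ref{llllplemkklllma4563ddfff025xxhjkloghyui}, and the convective term that is deliberately left unestimated. Your treatment of the boundary term and of the convective term is essentially what the paper does.

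The gap is in the $m$-contribution. You keep it in the form $\int_\Omega \nabla m\cdot |\nabla c|^{2p-2}\nabla c$ and then try to control $\int_\Omega |\nabla c|^{2p-2}|\nabla m|^2$. At this point in the argument the only available information on $\nabla m$ is the space--time bound \dref{ddczhjjjj2.5ghju48cfg9ssdd24}, i.e.\ $\int_0^t\int_\Omega|\nabla m|^2\le C$; there is no pointwise-in-time control of $\|\nabla m(\cdot,t)\|_{L^q(\Omega)}$ for any $q$. Since the lemma asserts a differential inequality valid for \emph{each} $t\in(0,T_{\max})$ with a fixed constant $C$, a space--time bound on $\nabla m$ cannot close the estimate. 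Your ``alternatively'' clause, invoking \dref{ddddhddddjui9sss09klllllopji115} to interpolate $|\nabla c|^{2p-2}$ against $|D^2c|^2$, does not help either: that inequality bounds $\int_\Omega|\nabla c|^{2p+2}$, not a mixed term containing $|\nabla m|^2$.

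The fix, which is what the paper does, is to integrate by parts once more on the $m$-term. Since $\partial_\nu c=0$ on $\partial\Omega$, the boundary term $\int_{\partial\Omega} m\,|\nabla c|^{2p-2}\nabla c\cdot\nu$ vanishes, and one obtains
\[
\int_\Omega \nabla m\cdot |\nabla c|^{2p-2}\nabla c
= -\int_\Omega m\,|\nabla c|^{2p-2}\Delta c
  - \int_\Omega m\,\nabla c\cdot\nabla\bigl(|\nabla c|^{2p-2}\bigr).
\]
Now $m$ appears undifferentiated and one uses the $L^\infty$ bound $\|m(\cdot,t)\|_{L^\infty(\Omega)}\le \|m_0\|_{L^\infty(\Omega)}$ from \dref{ddfgczhhhh2.5ghju48cfg924ghyuji}. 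Each piece is then estimated by Young's inequality against a small multiple of the dissipation plus $C\int_\Omega|\nabla c|^{2p-2}$, and the latter is absorbed into $\tfrac12\int_\Omega|\nabla c|^{2p}$ plus a constant. This is the missing step that makes the constant $C$ in \dref{789hjui909klopji115} genuinely independent of $t$.
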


\begin{proof}
Differentiating the second equation in f\dref{1.dddduiikkldffdffg1} and using that $\Delta|\nabla c|^2 = 2\nabla c\cdot \nabla \Delta c +
2|D^2c|^2$, we obtain the pointwise identity
\begin{equation}
\begin{array}{rl}
&\disp{\frac{1}{2}(|\nabla c|^2)_t}
\\
= &\disp{\nabla c\cdot\nabla\left\{\Delta c
-c +m -u \cdot\nabla  c\right\} }
\\
=&\disp{\frac12\Delta|\nabla c|^2-|D^2c|^2-\nabla c\cdot\nabla(c-m) }
\\
&-\disp{\nabla c\cdot\nabla(u\cdot\nabla c)~~\mbox{for all}~~ t\in(0,T_{max}).}\\
\end{array}
\label{ssdddcz2.5ghju48156}
\end{equation}
Multiplying \dref{ssdddcz2.5ghju48156} by $(|\nabla c|^2)^{p-1}$ and integrating by parts over $\Omega$, we have
%
\begin{equation}
\begin{array}{rl}
&\disp{\frac{1}{{2{p}}}\frac{d}{dt} \|\nabla c \|^{{{2{p}}}}_{L^{{2{p}}}(\Omega)}}
\\
=&\disp{\frac{1}{{2}}\int_{\Omega} |\nabla c |^{2{p}-2}\Delta |\nabla c |^2
-\int_{\Omega} |\nabla c |^{2{p}-2}|D^2 c |^2-\int_{\Omega} |\nabla c |^{2{p}}}
\\
&-\disp{\int_\Omega m \nabla\cdot( |\nabla c |^{2{p}-2}\nabla c )
+\int_\Omega (u \cdot\nabla  c )\nabla\cdot( |\nabla c |^{2{p}-2}\nabla c )}
\\
=&\disp{-\frac{{p}-1}{{2}}\int_{\Omega} |\nabla c |^{2{p}-4}\left|\nabla |\nabla c |^{2}\right|^2
+\frac{1}{{2}}\int_{\partial\Omega} |\nabla c |^{2{p}-2}\frac{\partial  |\nabla c |^{2}}{\partial\nu}
-\int_{\Omega} |\nabla c |^{2{p}}}\\
&-\disp{\int_{\Omega} |\nabla c |^{2{p}-2}|D^2 c |^2
-\int_\Omega m  |\nabla c |^{2{p}-2}\Delta c -\int_\Omega m \nabla c \cdot\nabla( |\nabla c |^{2{p}-2})}
\\
&+\disp{\int_\Omega (u \cdot\nabla  c )\nabla\cdot( |\nabla c |^{2{p}-2}\nabla c )}
\\
=&\disp{-\frac{2({p}-1)}{{{p}^2}}\int_{\Omega}\left|\nabla |\nabla c |^{m}\right|^2
+\frac{1}{{2}}\int_{\partial\Omega} |\nabla c |^{2{p}-2}\frac{\partial  |\nabla c |^{2}}{\partial\nu}
-\int_{\Omega} |\nabla c |^{2{p}-2}|D^2 c |^2}
\\
&-\disp{\int_\Omega m  |\nabla c |^{2{p}-2}\Delta c
-\int_\Omega m \nabla c \cdot\nabla( |\nabla c |^{2{p}-2})-\int_{\Omega} |\nabla c |^{2{p}}}
\\
&\disp{+\int_\Omega (u \cdot\nabla  c )\nabla\cdot( |\nabla c |^{2{p}-2}\nabla c )}\\
\end{array}
\label{cz2.5ghju48156}
\end{equation}
for all $t\in(0,T_{max})$.
We now estimate the right hand side of \dref{cz2.5ghju48156} one by one. To this end,
 since $|\Delta c | \leq\sqrt{2}|D^2c |$, by utilizing the Young inequality and Lemma \ref{fvfgsdfggfflemma45}, we can estimate
\begin{equation}
\begin{array}{rl}
&\disp\int_\Omega m  |\nabla c |^{2{p}-2}\Delta c
\\
\leq&\disp{\sqrt{2}\int_\Omega m  |\nabla c |^{2{p}-2}|D^2c |}
\\
\leq&\disp{\frac{1}{4}\int_\Omega  |\nabla c |^{2{p}-2}|D^2c |^2+{2}\int_\Omega m^2  |\nabla c |^{2{p}-2}}
\\
\leq&\disp{\frac{1}{4}\int_\Omega  |\nabla c |^{2{p}-2}|D^2c |^2+{2}\| m \|^2_{L^\infty(\Omega)}\int_\Omega  |\nabla c |^{2{p}-2}}\\
\leq&\disp{\frac{1}{4}\int_\Omega  |\nabla c |^{2{p}-2}|D^2c |^2+C_1\int_\Omega  |\nabla c |^{2{p}-2}}\\
\end{array}
\label{cz2.5ghju48hjuikl1}
\end{equation}
and
\begin{equation}
\begin{array}{rl}
&-\disp\int_\Omega m \nabla c \cdot\nabla( |\nabla c |^{2{p}-2})
\\
= &\disp{-({p}-1)\int_\Omega m  |\nabla c |^{2({p}-2)}\nabla c \cdot\nabla |\nabla c |^{2}}
\\
\leq &\disp{\frac{{p}-1}{8}\int_{\Omega} |\nabla c |^{2{p}-4}\left|\nabla |\nabla c |^{2}\right|^2+2({p}-1)
\int_\Omega |m |^2 |\nabla c |^{2{p}-2}}
\\
\leq &\disp{\frac{({p}-1)}{2{{p}^2}}\int_{\Omega}\left|\nabla |\nabla c |^{p}\right|^2+2({p}-1)\|m \|^2_{L^\infty(\Omega)}
\int_\Omega  |\nabla c |^{2{p}-2}}\\
\leq &\disp{\frac{({p}-1)}{2{{p}^2}}\int_{\Omega}\left|\nabla |\nabla c |^{p}\right|^2+C_2
\int_\Omega  |\nabla c |^{2{p}-2}}\\
\end{array}
\label{cz2.5ghju4ghjuk81}
\end{equation}
with some $C_1>0$ and $C_2>0$.
Now since Lemma \ref{llllplemkklllma4563ddfff025xxhjkloghyui} asserts the existence of $C_3>0$ such that
\begin{equation}
\begin{array}{rl}
\disp\int_{\partial\Omega}\frac{\partial |\nabla c |^2}{\partial\nu} |\nabla c |^{2{p}-2}
\leq &\disp{\frac{({p}-1)}{2{{p}^2}}\int_{\Omega}\left|\nabla |\nabla c |^{p}\right|^2+C_3},
\end{array}
\label{cz2.57151hhkkhhggyyxx}
\end{equation}
and therefore,  combining  \dref{cz2.5ghju48156}--\dref{cz2.57151hhkkhhggyyxx} and once more employing the Young inequality we can find $C_4>0$ such that
\begin{equation}\label{hjui909klopsssdddji115}
\begin{array}{rl}
&\disp{\frac{1}{{2{p}}}\frac{d}{dt}\|\nabla c \|^{{{2{p}}}}_{L^{{2{p}}}(\Omega)}
+\frac{{p}-1}{2{{p}^2}}\int_{\Omega}\left|\nabla |\nabla c |^{{p}}\right|^2
+\frac{1}{2}\int_\Omega  |\nabla c |^{2{p}-2}|D^2c |^2+\frac{1}{2}\int_{\Omega} |\nabla c |^{2{p}}}
\\
\leq&\disp{
\int_\Omega (u \cdot\nabla  c )\nabla\cdot( |\nabla c |^{2{p}-2}\nabla c )+C_4~~\mbox{for all}~~ t\in(0,T_{max}),}
\end{array}
\end{equation}
which immediately leads to our conclusion.
This completes the proof of Lemma \ref{789lemma4563025xxhjkloghyui}.
\end{proof}
Another application of Lemma \ref{fvfgsdfggfflemma45} is the following $L^p$ estimate of $u$, whose proof is similar
to that of its three-dimensional version (see Corollary 3.4 in Winkler \cite{Winkler11215}, see also \cite{Wang21215}).
\begin{lemma}\label{lemma3.4}
Let $\kappa=0$. Then
for any $p>1$, there exists $C>0$ such that
\begin{equation}
\int_{\Omega}| u (\cdot,t)|^p\leq C~~\mbox{for all}~~t\in(0,T_{max}).
\label{qidjfnhf}
\end{equation}
\end{lemma}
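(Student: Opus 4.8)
The plan is to derive a uniform-in-time bound for $\|u(\cdot,t)\|_{L^p(\Omega)}$ by testing the Stokes equation (recall $\kappa=0$) against an appropriate power of $u$ and exploiting the mild-solution/semigroup formalism associated with the Stokes operator $A$. First I would fix $p>1$ and write the fourth equation of \dref{1.dddduiikkldffdffg1}, after applying the Helmholtz projection $\mathcal{P}$, as $u_t+Au=\mathcal{P}\big[(n+m)\nabla\phi\big]$, so that
$$
u(\cdot,t)=e^{-tA}u_0+\int_0^t e^{-(t-s)A}\mathcal{P}\big[(n(\cdot,s)+m(\cdot,s))\nabla\phi\big]\,ds .
$$
The key ingredients are the standard $L^q$--$L^p$ smoothing estimates for the Stokes semigroup, namely $\|e^{-tA}\varphi\|_{L^p(\Omega)}\le C\,t^{-\frac{1}{1}(\frac1q-\frac1p)}\|\varphi\|_{L^q(\Omega)}$ for $1<q\le p<\infty$ (in the two-dimensional setting the exponent is $\frac{1}{2}\cdot 2(\frac1q-\frac1p)=\frac1q-\frac1p$), together with $\|e^{-tA}\varphi\|_{L^p(\Omega)}\le C\|\varphi\|_{L^p(\Omega)}$ and the exponential decay $\|e^{-tA}\varphi\|_{L^p(\Omega)}\le Ce^{-\lambda_1 t}\|\varphi\|_{L^p(\Omega)}$ on the space of solenoidal fields with zero mean (which applies here since $\mathcal{P}[(n+m)\nabla\phi]$ need not have zero average, but one can absorb the resulting polynomially bounded term or, as is standard, split off the stationary part).

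Next I would bring in the a~priori control already available. From Lemma~\ref{fvfgsdfggfflemma45} we have $\|n(\cdot,t)\|_{L^1(\Omega)}\le\|n_0\|_{L^1(\Omega)}$ and $\|m(\cdot,t)\|_{L^\infty(\Omega)}\le\|m_0\|_{L^\infty(\Omega)}$, hence in particular $\|n(\cdot,t)+m(\cdot,t)\|_{L^1(\Omega)}\le c_1$ uniformly in $t\in(0,T_{max})$. Since $\phi\in W^{2,\infty}(\Omega)$ by \dref{dd1.1fghyuisdakkkllljjjkk}, $\nabla\phi\in L^\infty(\Omega)$, so the forcing term $g(\cdot,s):=(n(\cdot,s)+m(\cdot,s))\nabla\phi$ satisfies $\|g(\cdot,s)\|_{L^1(\Omega)}\le c_2$ for all $s$. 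Applying the semigroup bound with $q=1$ and any target exponent $p$ (in two dimensions the singularity $t^{-(1-\frac1p)}$ is integrable near $0$ precisely because $1-\frac1p<1$), and using $\|u_0\|_{L^p(\Omega)}\le c_3$ (which follows from $u_0\in D(A^\gamma)$ with $\gamma>\frac12$ and the embedding $D(A^\gamma)\hookrightarrow L^\infty(\Omega)$ in two dimensions, hence into every $L^p$), we obtain
$$
\|u(\cdot,t)\|_{L^p(\Omega)}\le c_3 e^{-\lambda_1 t}+c_2 C\int_0^t (t-s)^{-(1-\frac1p)}e^{-\lambda_1(t-s)}\,ds\le c_4
$$
for all $t\in(0,T_{max})$, where the last integral is bounded uniformly in $t$ because $\int_0^\infty \sigma^{-(1-\frac1p)}e^{-\lambda_1\sigma}\,d\sigma<\infty$. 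This yields \dref{qidjfnhf}.

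The main technical obstacle is the bookkeeping around the non-zero-mean part of the forcing: $\mathcal{P}[(n+m)\nabla\phi]$ generally has a nonvanishing projection onto constants, so the clean exponential decay estimate for $e^{-tA}$ is not literally available on all of $L^p_\sigma(\Omega)$. This is handled in the usual way — either by noting that the mean of $u$ itself obeys a simple ODE (since $\nabla\cdot u=0$ and $u|_{\partial\Omega}=0$ force $\int_\Omega u=0$ for all $t$, so in fact $\mathcal{P}$ acting on an $L^1$-bounded field composed with the semigroup still decays), or by decomposing $u=\bar u+\tilde u$ and estimating each piece separately; in the present Dirichlet setting the boundary condition $u=0$ on $\partial\Omega$ together with $\nabla\cdot u=0$ already guarantees $\int_\Omega u(\cdot,t)=0$, so the zero-mean decay estimate applies directly and no decomposition is needed. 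The remaining steps — the integrability of the time-singular kernel and the uniform bound on the Duhamel integral — are routine and parallel Corollary~3.4 of \cite{Winkler11215} and the corresponding argument in \cite{Wang21215}, the only simplification being that in two dimensions the exponent $1-\frac1p$ is always $<1$ so no restriction on $p$ is needed.
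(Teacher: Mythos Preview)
Your approach is essentially the same as the paper's: both write the mild-solution formula $u(\cdot,t)=e^{-tA}u_0+\int_0^t e^{-(t-s)A}\mathcal{P}[(n+m)\nabla\phi]\,ds$, invoke the uniform $L^1$ bound on $(n+m)\nabla\phi$ coming from Lemma~\ref{fvfgsdfggfflemma45} and \dref{dd1.1fghyuisdakkkllljjjkk}, and then use Stokes-semigroup smoothing with exponential weight to bound the Duhamel integral (the paper phrases this via a fractional power $A^{\gamma_1}$ with $\gamma_1\in(1-\tfrac1p,1)$, but the content is identical). One small correction to your digression: the claim that $\nabla\cdot u=0$ together with $u|_{\partial\Omega}=0$ forces $\int_\Omega u=0$ is not correct, but it is also unnecessary --- the Dirichlet Stokes operator has trivial kernel, so $\|e^{-tA}\|_{L^p_\sigma\to L^p_\sigma}\le Ce^{-\lambda_1 t}$ holds outright and no zero-mean splitting is needed.
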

\begin{proof}
On the basis of the variation-of-constants formula for the projected version of the fourth
equation in \dref{1.dddduiikkldffdffg1}, we derive that
$$u (\cdot, t) = e^{-tA}u_0 +\int_0^te^{-(t-\tau)A}
\mathcal{P}((n (\cdot,\tau)+m (\cdot,\tau))\nabla\phi)d\tau~~ \mbox{for all}~~ t\in(0,T_{max}).$$
Let $h :=\mathcal{P}((n (\cdot,\tau)+m (\cdot,\tau)\nabla\phi)$.
Then  in view of Lemma \ref{fvfgsdfggfflemma45} as well as  \dref{dd1.1fghyuisdakkkllljjjkk}, one has
$$
\|h (\cdot,t)\|_{L^{{1}}(\Omega)}\leq C_1 ~~~\mbox{for all}~~ t\in(0,T_{max})
$$
with some $C_1>0.$
For any $p > 1$, we can fix $\gamma_1$ such that $\gamma_1\in (1- \frac{1}{p}, 1)$. It then follows the standard smoothing
properties of the Stokes semigroup that
there exist $C_2>0$ as well as  $C_3> 0$ and $\lambda_1 > 0$ such that
\begin{equation}
\begin{array}{rl}
\|u (\cdot, t)\|_{L^p(\Omega)}\leq&\disp{\|A^{\gamma_1}
e^{-tA}u_0\|_{L^p(\Omega)} +\int_0^t\|A^{\gamma_1} e^{-(t-\tau)A}A^{-{\gamma_1}}[h (\cdot,\tau)]d\tau\|_{L^p(\Omega)}d\tau}\\
\leq&\disp{C_2 +C_{2}\int_0^t(t-\tau)^{-{\gamma_1}-\frac{2}{2}(\frac{1}{{1}}-\frac{1}{p})}e^{-\lambda_1(t-\tau)}\|h (\cdot,\tau)\|_{L^{{1}}(\Omega)}d\tau}\\
\leq&\disp{C_{3}~~ \mbox{for all}~~ t\in(0,T_{max}),}\\
\end{array}
\label{cz2.571hhhhh51ccvvhddfccvvhjjjkkhhggjjllll}
\end{equation}
where  in
the last inequality 
 we have used the fact that
$$\begin{array}{rl}\disp\int_{0}^t(t-\tau)^{-{\gamma_1}-\frac{2}{2}(\frac{1}{{1}}-\frac{1}{p})}e^{-\lambda_1(t-\tau)}ds
\leq&\disp{\int_{0}^{\infty}\sigma^{-{\gamma_1}-\frac{2}{2}(\frac{1}{{1}}-\frac{1}{p})} e^{-\lambda_1\sigma}d\sigma<+\infty}\\
\end{array}
$$
by using  $-{\gamma_1}-\frac{2}{2}(\frac{1}{{1}}-\frac{1}{p})>-1.$
\end{proof}
Now our knowledge on regularity of $u$ (see Lemma \ref{lemma3.4}) is sufficient to allow for the derivation of $L^p$ bounds for
$n$ by means of an analysis of a functional $\int_\Omega n^p$
 with  arbitrarily
large $p$.

\begin{lemma}\label{lsssemma3.5}
Let  $\kappa=0$ as well as $|S(n)|\leq C_S(1+n)^{-\alpha}$ with $\alpha>-1$.  Then for any $p>2,$ 
one can find $C > 0$
such that
\begin{equation}
\begin{array}{rl}\label{ssddfddfff44ss55ddd5eqx45xx12112ccgghh}
\disp\int_\Omega n ^p(\cdot,t)
\leq &
C~~~~ \mbox{for all}~ t\in(0,T_{max}).\\
\end{array}
\end{equation}
\end{lemma}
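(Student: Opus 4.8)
The strategy is the standard $L^p$ testing procedure for the $n$-equation, combined with the chemotactic weakening produced by $\alpha>-1$, the $c$-gradient regularity from Lemma \ref{789lemma4563025xxhjkloghyui}, and the $L^p$ bound on $u$ from Lemma \ref{lemma3.4}. First I would multiply the first equation in \dref{1.dddduiikkldffdffg1} by $n^{p-1}$ and integrate over $\Omega$; the convective term $\int_\Omega n^{p-1}u\cdot\nabla n = \frac1p\int_\Omega u\cdot\nabla n^p$ vanishes because $\nabla\cdot u=0$ and $u|_{\partial\Omega}=0$, the term $-\int_\Omega n^p m\le 0$ is discarded, and one is left with
\begin{equation}
\frac1p\frac{d}{dt}\int_\Omega n^p + (p-1)\int_\Omega n^{p-2}|\nabla n|^2 = (p-1)\int_\Omega n^{p-1}S(n)\nabla n\cdot\nabla c.
\label{plan1}
\end{equation}
Using $|S(n)|\le C_S(1+n)^{-\alpha}$ and Young's inequality on the right-hand side, the factor $n^{p-1}(1+n)^{-\alpha}$ is dominated by $C(1+n)^{p-1-\alpha}$, so after absorbing half of the diffusion term one obtains, after also adding $\int_\Omega n^p$ to both sides, a differential inequality of the form
\begin{equation}
\frac1p\frac{d}{dt}\int_\Omega n^p + \frac{p-1}{2}\int_\Omega n^{p-2}|\nabla n|^2 + \int_\Omega n^p \le C\int_\Omega (1+n)^{p-\alpha-1}|\nabla c|^2 + C\int_\Omega n^p.
\label{plan2}
\end{equation}
Rewriting the gradient term via $\int_\Omega n^{p-2}|\nabla n|^2 = \frac{4}{p^2}\int_\Omega|\nabla n^{p/2}|^2$ sets up the Gagliardo--Nirenberg machinery in the next step.

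The heart of the argument is controlling $\int_\Omega (1+n)^{p-\alpha-1}|\nabla c|^2$. I would pick an exponent $q>1$ (to be chosen depending on $p$ and $\alpha$) and apply Hölder's inequality to split this into $\|\,(1+n)^{p-\alpha-1}\|_{L^{q'}}\,\|\,|\nabla c|^2\|_{L^q}$, i.e. into a power of $\|1+n\|_{L^{(p-\alpha-1)q'}}$ and a power of $\|\nabla c\|_{L^{2q}}$. The key point is that $\alpha>-1$ forces $p-\alpha-1 < p$, which gives a genuine gain: the $n$-norm appearing is at a strictly subcritical power relative to $\int_\Omega n^p$, and by the Gagliardo--Nirenberg inequality applied to $\|n^{p/2}\|_{L^{2(p-\alpha-1)q'/p}}^{2(p-\alpha-1)q'/p}$ this is interpolated between $\|\nabla n^{p/2}\|_{L^2}^2$ (with a coefficient that can be made as small as desired) and $\|n^{p/2}\|_{L^{2/p}}^{\cdots} = \|n\|_{L^1}^{\cdots}$, which is bounded by \dref{ddfgczhhhh2.5ghjjjssddju48cfg924ghyuji}. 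For this interpolation to close — i.e. for the $\|\nabla n^{p/2}\|_{L^2}^2$ exponent to be strictly below $1$ so that Young's inequality lets us absorb it into the left side of \dref{plan2} — one needs $q$ sufficiently close to $1$, and one then needs the complementary factor $\|\nabla c\|_{L^{2q}}$ to be a priori bounded; choosing $2q$ just above $2$ and invoking Lemma \ref{789lemma4563025xxhjkloghyui} with the appropriate $\tilde p>1$ (together with a parabolic-smoothing / energy argument to convert the differential inequality there into a uniform bound on $\|\nabla c\|_{L^{2\tilde p}}$, using that $\int_\Omega(u\cdot\nabla c)\nabla\cdot(|\nabla c|^{2\tilde p-2}\nabla c)$ is controlled via the $L^p$-bound on $u$) supplies exactly that.

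After these absorptions, \dref{plan2} collapses to $\frac{d}{dt}y(t) + c_1 y(t) \le c_2$ with $y(t)=\int_\Omega n^p$, whence Grönwall's lemma (or Lemma \ref{lemma630} with $\alpha=1$) yields the claimed time-independent bound $\int_\Omega n^p(\cdot,t)\le C$. I expect the main obstacle to be the bookkeeping in the Gagliardo--Nirenberg step: one must verify that the interpolation exponents are simultaneously admissible, namely that $q>1$ can be chosen so that both (a) $\|\nabla c\|_{L^{2q}}$ is covered by a valid choice of $\tilde p>1$ in Lemma \ref{789lemma4563025xxhjkloghyui}, and (b) the power of $\|\nabla n^{p/2}\|_{L^2}^2$ produced by interpolating $\|n^{p/2}\|_{L^{2(p-\alpha-1)q'/p}}$ is strictly less than $1$. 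It is precisely the strict inequality $\alpha>-1$ that creates the slack needed to satisfy (b) uniformly in large $p$; the borderline case $\alpha=-1$ is lost here, consistent with the hypothesis. A secondary technical point is that Lemma \ref{789lemma4563025xxhjkloghyui} as stated is only a differential inequality still containing the fluid term $\int_\Omega(u\cdot\nabla c)\nabla\cdot(|\nabla c|^{2\tilde p-2}\nabla c)$, so one first integrates by parts there, estimates that term by $\varepsilon\int_\Omega|\nabla c|^{2\tilde p-2}|D^2 c|^2 + C(\varepsilon)\int_\Omega|u|^{2\tilde p}|\nabla c|^{2\tilde p-2}$ and then $\int_\Omega|u|^{2\tilde p}|\nabla c|^{2\tilde p-2}$ by Young plus the $L^p$-bound on $u$ from Lemma \ref{lemma3.4}, absorbing the Hessian term and closing a bounded differential inequality for $\|\nabla c\|_{L^{2\tilde p}}^{2\tilde p}$ — this is where $\kappa=0$ enters, since Lemma \ref{lemma3.4} requires it.
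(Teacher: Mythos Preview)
Your approach has a genuine gap that restricts it to $\alpha>-\tfrac12$ rather than the full range $\alpha>-1$. When you apply Young's inequality directly to $(p-1)\int_\Omega n^{p-1}S(n)\nabla n\cdot\nabla c$, the sensitivity gets \emph{squared}: the correct remainder is
\[
\tfrac{(p-1)}{2}\int_\Omega n^{p}S(n)^2|\nabla c|^2
\;\le\; C\int_\Omega (1+n)^{p-2\alpha}|\nabla c|^2,
\]
not $\int_\Omega(1+n)^{p-1-\alpha}|\nabla c|^2$ as you wrote. Running the H\"older/Gagliardo--Nirenberg bookkeeping with the correct exponent $p-2\alpha$, the power of $\|\nabla n^{p/2}\|_{L^2}$ that must be absorbed is $\tfrac{2}{p}\bigl(p-2\alpha-1+\tfrac1q\bigr)$, and this is $<2$ if and only if $\tfrac1q<1+2\alpha$. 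For $\alpha\le-\tfrac12$ no such $q>1$ exists, so the absorption step fails. (This is exactly why Lemma~\ref{ggh788999hjllsssemmkkllla3.5}, which uses the same Young splitting, is stated only for $\alpha>-\tfrac12$.)

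The paper circumvents this loss by integrating the chemotaxis term by parts \emph{once more before} applying any Young inequality: writing
\[
(p-1)\int_\Omega n^{p-1}S(n)\nabla n\cdot\nabla c
=\int_\Omega\nabla\Bigl(\int_0^n \tau^{p-1}S(\tau)\,d\tau\Bigr)\cdot\nabla c
=-\int_\Omega\Bigl(\int_0^n \tau^{p-1}S(\tau)\,d\tau\Bigr)\Delta c,
\]
so that $S$ enters only linearly and the integrand is bounded by $C\,n^{p}(1+n)^{(-\alpha)_+}|\Delta c|$. Young's inequality with exponent $\tfrac{p+1}{1-(-\alpha)_+}$ --- finite precisely when $\alpha>-1$ --- then reduces the problem to a space--time estimate $\int_{(t-\tau)_+}^{t}\|\Delta c\|_{L^q}^q\le C$. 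This is obtained not from Lemma~\ref{789lemma4563025xxhjkloghyui} but from parabolic maximal regularity applied to the $c$-equation, combined with the $L^\infty$ bound on $c$, the $L^p$ bound on $u$ from Lemma~\ref{lemma3.4}, and a Gagliardo--Nirenberg bootstrap that controls $\|\nabla c\|_{L^{3q/2}}$ back in terms of a sublinear power of $\|\Delta c\|_{L^q}$. Your plan to close Lemma~\ref{789lemma4563025xxhjkloghyui} into a uniform $\|\nabla c\|_{L^{2\tilde p}}$ bound via Lemma~\ref{lemma3.4} is itself correct (it is what Lemma~\ref{lemma4563025xxhjkloghyui} does), but even with that bound in hand the direct-Young route stalls at $\alpha>-\tfrac12$.
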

\begin{proof}Testing the first equation in \dref{1.dddduiikkldffdffg1} by $n ^{p-1}$, integrating by parts over $\Omega$ and using that
$\nabla\cdot u  = 0$, we gain
\begin{equation}\label{ddsssdffsssf44ss55dddlll5eqx45xx12112ccgghh}
\begin{array}{rl}
& \disp\frac{1}{p}\frac{d}{dt}\disp\int_\Omega n ^p + (p-1)\disp\int_\Omega n ^{p-2}{| \nabla n  |^2}\\
\leq&\disp (p-1)\int_\Omega  n ^{p-1}{ C_S(1+n )^{-\alpha}}{}\nabla n  \cdot \nabla c \\
=&\disp \int_\Omega \nabla \int_0^{n }{ \tau^{p-1}C_S(1+\tau)^{-\alpha}}{}d\tau \cdot \nabla c \\
=&\disp -\int_\Omega \int_0^{n }{\tau^{p-1} C_S(1+\tau)^{-\alpha}}{}d\tau \Delta c \\
\leq&\disp C_S\int_\Omega n ^{p}(1+n )^{(-\alpha)_+}|\Delta c |~~~\mbox{for all}~~~ t\in(0,T_{max}).\\
\end{array}
\end{equation}
For $p>2,$  we have $\frac{2}{p}<\frac{2(p+1)}{p}<+\infty$, which allows for an application of  the Gagliardo-Nirenberg inequality along with \dref{ddfgczhhhh2.5ghjjjssddju48cfg924ghyuji} to provide $C_1> 0$ and $C_2> 0$ such that
 \begin{equation}\label{sssddsssdffssssssf44ss55dddlll5eqx45xx12112ccgghh}
\begin{array}{rl}
& \disp\int_\Omega{}{n ^{p+1}}\\
=&\disp \|n ^{\frac{p}{2}}\|^{\frac{2(p+1)}{p}}_{L^{\frac{2(p+1)}{p}}(\Omega)}\\
\leq&\disp C_1(\|\nabla n ^{\frac{p}{2}}\|^2_{L^2(\Omega)}\|n ^{\frac{p}{2}}\|^{\frac{2}{p}}_{L^\frac{2}{p}(\Omega)}+\|n ^{\frac{p}{2}}\|^{\frac{2(p+1)}{p}}_{L^\frac{2}{p}(\Omega)})\\
\leq&\disp C_2(\|\nabla n ^{\frac{p}{2}}|^2_{L^2(\Omega)}+1)~~~\mbox{for all}~~~ t\in(0,T_{max}),
\end{array}
\end{equation}
which implies that
$$\disp\|\nabla n ^{\frac{p}{2}}|^2_{L^2(\Omega)}\geq\frac{1}{C_2}\bigl[\int_\Omega{}{n ^{p+1}}-1\bigr]~~~\mbox{for all}~~~ t\in(0,T_{max}).$$
Inserting the above inequality into \dref{ddsssdffsssf44ss55dddlll5eqx45xx12112ccgghh} and using the Young inequality, there is $C_3>0$ such that
\begin{equation}\label{344555ddsssdffsssf44ss55dddlll5eqx45xx12112ccgghh}
\begin{array}{rl}
& \disp\frac{1}{p}\frac{d}{dt}\disp\int_\Omega n ^p
+ \disp\frac{p-1}{2}\disp\int_\Omega n ^{p-2}{| \nabla n  |^2}+\frac{1}{2C_2}\frac{p-1}{2}\frac{4}{p^2}\int_\Omega{}{n ^{p+1}}\\
\leq&\disp\disp\frac{1}{4C_2}\frac{p-1}{2}\frac{4}{p^2}\int_\Omega{}{n ^{p+1}}+C_3 \int_\Omega |\Delta c |^{\frac{p+1}{(1-(-\alpha)_+}}+\frac{ p-1}{C_2 p^2}~~~\mbox{for all}~~~ t\in(0,T_{max}).\\
\end{array}
\end{equation}
In order to appropriately estimate the integral on the right-hand side herein, for any $q>1,$
we rely
on the regularity theory of parabolic equations and  Lemma \ref{fvfgsdfggfflemma45}, whence it is possible to
find   positive constants $C_4$ as well as $C_5$ and $C_6$ such that
\begin{equation}
\begin{array}{rl}
&\disp{\int_{(t-\tau)_+}^t\|\Delta c(\cdot,t)\|^{q}_{L^{q}(\Omega)}ds}\\
\leq &\disp{C_4\left(\int_{(t-\tau)_+}^t
\|m(\cdot,s)-u(\cdot,s)\cdot\nabla c(\cdot,s)\|^{q}_{L^{q}(\Omega)}ds\right).}\\
\leq &\disp{C_5\left(\int_{(t-\tau)_+}^t
[\|u(\cdot,s)\|^{{3q}}_{L^{{3q}}(\Omega)}+\|\nabla c(\cdot,s)\|^{\frac{3q}{2}}_{L^{\frac{3q}{2}}(\Omega)}+1]ds\right)}\\
\leq &\disp{C_6\left(\int_{(t-\tau)_+}^t
[\|\nabla c(\cdot,s)\|^{\frac{3q}{2}}_{L^{\frac{3q}{2}}(\Omega)}+1]ds\right)~~\mbox{for all}~~ t\in(0,T_{max}-\tau),}\\
\end{array}
\label{cz2.5bbsssv114}
\end{equation}
where
\begin{equation}
\tau:=\min\{1,\frac{1}{2}T_{max}\}.
\label{jvgxddrg}
\end{equation}
Next, 
 \dref{fvfgsdfggfflemma45} and the
Gagliardo-Nirenberg interpolation inequality
provides $C_7>0$ and $C_8>0$ such that
\begin{equation}
\begin{array}{rl}\label{ddfff44sssss55dddlll5eqx45xx12112ccgghh}
\disp&\disp \disp\int_\Omega| \nabla c  |^{\frac{3q}{2}}\\
\leq &C_7\disp \bigl[\|\Delta c(\cdot,t)\|^{\frac{\frac{3q}{4}-1}{1-\frac{1}{q}}}_{L^{q}(\Omega)}\|c\|^{\frac{3q}{2}-\frac{\frac{3q}{4}-1}{1-\frac{1}{q}}}_{L^{\infty}(\Omega)}+\|c\|^{\frac{3q}{2}}_{L^{\infty}(\Omega)}\bigr]\\
\leq &C_8\disp \bigl[\|\Delta c(\cdot,t)\|^{\frac{\frac{3q}{4}-1}{1-\frac{1}{q}}}_{L^{q}(\Omega)}+1\bigr]~~~\mbox{for all}~~ t \in (0,T_{max}).\\
\end{array}
\end{equation}
Since another application of the Young inequality yields $C_{9} > 0$ such that
\begin{equation}
\begin{array}{rl}\label{ddfff44sssss5sssdd5dddlll5eqx45xx12112ccgghh}
\disp \disp \disp\int_\Omega| \nabla c  |^{\frac{3q}{2}}
\leq &\disp\frac{C_8}{2C_6}\disp \|\Delta c(\cdot,t)\|^{q}_{L^{q}(\Omega)}+C_{9}~~~\mbox{for all}~~ t \in (0,T_{max}),\\
\end{array}
\end{equation}
 and \dref{cz2.5bbsssv114}  therefore entails
that
\begin{equation}
\begin{array}{rl}
\disp{\int_{(t-\tau)_+}^t\|\Delta c(\cdot,t)\|^{q}_{L^{q}(\Omega)}ds}
\leq &\disp{C_{10}~~\mbox{for all}~~ t\in(0,T_{max}-\tau),}\\
\end{array}
\label{cz2.5bbsssv1sss14}
\end{equation}
with some $C_{10}>0$.
On choosing $q=\frac{p+1}{1-(-\alpha)_+}$, we thus conclude from \dref{cz2.5bbsssv1sss14} that with
some $C_{11}> 0$ we have
\begin{equation}
\begin{array}{rl}
\disp{\int_{(t-\tau)_+}^t\|\Delta c(\cdot,t)\|^{\frac{p+1}{(1-(-\alpha)_+}}_{L^{\frac{p+1}{(1-(-\alpha)_+}}(\Omega)}ds}
\leq &\disp{ C_{11}~~\mbox{for all}~~ t\in(0,T_{max}-\tau).}\\
\end{array}
\label{ssdddfcz2.5bbkkkksssv1sssss14}
\end{equation}
Recalling   \dref{344555ddsssdffsssf44ss55dddlll5eqx45xx12112ccgghh} and using \dref{ssdddfcz2.5bbkkkksssv1sssss14}, we derive that \dref{ssddfddfff44ss55ddd5eqx45xx12112ccgghh}  holds with the help of some basic  computation.
\end{proof}
By combining the above Lemmas we can now derive bounds, uniformly with respect to $t\in(0,T_{max})$, for arbitrarily
high Lebesgue norms of $\nabla c$.

\begin{lemma}\label{lemma4563025xxhjkloghyui}
Let $\kappa=0$ as well as $|S(n)|\leq C_S(1+n)^{-\alpha}$ with $\alpha\geq-1$.
 Then for any $p>1$, there exists a positive constant $C$  such that the solution of \dref{1.dddduiikkldffdffg1}, \dref{ccvvx1.73142sdd6677gg}--\dref{ccvvx1.ddfff731426677gg} satisfies
\begin{equation}\label{hjui909klopji115}
\disp{\|\nabla c (\cdot, t)\|_{L^{2{p}}(\Omega)}\leq C~~\mbox{for all}~~ t\in(0,T_{max}).}
\end{equation}
\end{lemma}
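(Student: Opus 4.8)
The plan is to combine the differential inequality from Lemma~\ref{789lemma4563025xxhjkloghyui} with the uniform $L^p$ bounds on $u$ (Lemma~\ref{lemma3.4}) and on $n$ (Lemma~\ref{lsssemma3.5}) in order to absorb the fluid-convection term on the right-hand side of \dref{789hjui909klopji115} and then feed the result into the ODE lemma, Lemma~\ref{lemma630}. Fix $p>1$ and write $y(t):=\frac{1}{2p}\|\nabla c(\cdot,t)\|_{L^{2p}(\Omega)}^{2p}$. From Lemma~\ref{789lemma4563025xxhjkloghyui} we have
\[
y'(t)+\frac{p-1}{2p^2}\int_\Omega\bigl|\nabla|\nabla c|^p\bigr|^2+\frac12\int_\Omega|\nabla c|^{2p-2}|D^2c|^2+\frac12\int_\Omega|\nabla c|^{2p}\le\int_\Omega(u\cdot\nabla c)\,\nabla\cdot\bigl(|\nabla c|^{2p-2}\nabla c\bigr)+C,
\]
so the whole game is to dominate the convective integral by a small multiple of the dissipative terms on the left plus a constant.

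First I would expand $\nabla\cdot(|\nabla c|^{2p-2}\nabla c)=|\nabla c|^{2p-2}\Delta c+(p-1)|\nabla c|^{2p-4}\nabla c\cdot\nabla|\nabla c|^2$, so the convective term splits into two pieces each of which is pointwise controlled by $|u|\,|\nabla c|^{2p-1}|D^2c|$ up to a factor $p$. Applying Young's inequality, this is bounded by $\frac14\int_\Omega|\nabla c|^{2p-2}|D^2c|^2+C_p\int_\Omega|u|^2|\nabla c|^{2p}$, which safely absorbs into the $|D^2c|^2$-dissipation term; the real work is the remaining term $\int_\Omega|u|^2|\nabla c|^{2p}$. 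I would handle it by Hölder with exponents $(r,r')$: $\int_\Omega|u|^2|\nabla c|^{2p}\le\|u\|_{L^{2r}(\Omega)}^2\,\||\nabla c|^p\|_{L^{2r'}(\Omega)}^2$, where $r$ is chosen large enough that $\|u\|_{L^{2r}}$ is finite by Lemma~\ref{lemma3.4} (in $\mathbb R^2$ any finite exponent is available), and then estimate $\||\nabla c|^p\|_{L^{2r'}(\Omega)}^2$ by the Gagliardo–Nirenberg inequality applied to the function $|\nabla c|^p\in H^1(\Omega)$: $\||\nabla c|^p\|_{L^{2r'}}^2\le C(\|\nabla|\nabla c|^p\|_{L^2}^{2\theta}\||\nabla c|^p\|_{L^{q_0}}^{2(1-\theta)}+\||\nabla c|^p\|_{L^{q_0}}^2)$ for a convenient low exponent $q_0$, with $\theta<1$ provided $r'$ is close enough to $1$, i.e. $r$ large. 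The lower-order norm $\||\nabla c|^p\|_{L^{q_0}}$ can be controlled by an already-established bound (e.g. take $q_0$ so that $p q_0=2$, invoking the $L^2$-type control of $\nabla c$ that follows from an energy estimate, or more simply interpolate against $\int_\Omega|\nabla c|^{2p}$ itself with a small coefficient). After Young's inequality in the resulting product $\|\nabla|\nabla c|^p\|_{L^2}^{2\theta}\cdot(\text{bounded})$, the gradient term is absorbed into $\frac{p-1}{2p^2}\int_\Omega|\nabla|\nabla c|^p|^2$, leaving only constants.

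The upshot is a clean differential inequality $y'(t)+c_1 y(t)\le c_2$ for all $t\in(0,T_{\max})$ with $c_1,c_2>0$ independent of $t$ (the linear $y$-term coming from the $\frac12\int_\Omega|\nabla c|^{2p}$ on the left), and Lemma~\ref{lemma630} with $\alpha=1$, $h\equiv c_2$ then yields $y(t)\le\max\{y(0)+c_2,\ \tau^{-1}c_2/c_1+2c_2\}$, which is exactly the assertion \dref{hjui909klopji115}. The main obstacle I anticipate is the bookkeeping in the Gagliardo–Nirenberg step: one must verify that the interpolation exponent $\theta$ is strictly below $1$ so that Young's inequality produces an absorbable power of the gradient term, and this forces the Hölder exponent $r$ (hence the required integrability of $u$) to grow with $p$ — which is fine in two dimensions precisely because Lemma~\ref{lemma3.4} gives $L^p$ bounds on $u$ for every finite $p$, but it is the place where the planar setting and the condition $\alpha\ge-1$ (via Lemma~\ref{lsssemma3.5}, which underpins Lemma~\ref{lemma3.4}'s usefulness downstream) are genuinely used. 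A secondary point of care is that $\|c\|_{L^\infty(\Omega)}\le M_*$ from Lemma~\ref{fvfgsdfggfflemma45} is needed to keep the interpolation constants uniform in $t$, and that the boundary term in \dref{789hjui909klopji115} has already been disposed of inside Lemma~\ref{789lemma4563025xxhjkloghyui} via Lemma~\ref{llllplemkklllma4563ddfff025xxhjkloghyui}, so no further boundary analysis is required here.
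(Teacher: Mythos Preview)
Your overall strategy---absorb the convective term $\int_\Omega(u\cdot\nabla c)\,\nabla\cdot(|\nabla c|^{2p-2}\nabla c)$ into the dissipation on the left of \dref{789hjui909klopji115} and then read off an autonomous ODE for $y$---is correct, but your execution differs from the paper's and your handling of the lower-order norm is the fragile spot.

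The paper does not go through H\"older and Gagliardo--Nirenberg on $|\nabla c|^p$. Instead, after reaching $\int_\Omega|u|^2|\nabla c|^{2p}$ it applies a direct Young split $\int_\Omega|u|^2|\nabla c|^{2p}\le\varepsilon\int_\Omega|\nabla c|^{2p+2}+C_\varepsilon\int_\Omega|u|^{2p+2}$; the $u$-integral is a constant by Lemma~\ref{lemma3.4}, and the crucial step is that the interpolation Lemma~\ref{llllplemkklllma4563ddfff025xxhjkloghyui} together with $\|c\|_{L^\infty}\le M_*$ gives $\int_\Omega|\nabla c|^{2p+2}\le C_\Omega M_*^{2}\int_\Omega|\nabla c|^{2p-2}|D^2c|^2+C$. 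Choosing $\varepsilon$ small then absorbs everything into the $|D^2c|^2$-dissipation, leaving only constants and closing immediately to $y'+py\le C$.

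Your route instead interpolates $\||\nabla c|^p\|_{L^{2r'}}$ between $\|\nabla|\nabla c|^p\|_{L^2}$ and a low norm $\||\nabla c|^p\|_{L^{q_0}}$. With your option (b), $q_0=2$, the low norm is $(2py)^{1/2}$ and after Young you are stuck with a residual $Cy$ on the right whose coefficient you cannot force below $p$; the linear absorption $y'+py\le Cy+C$ then fails unless you bring in the superlinear control of $y$ by $\int_\Omega|\nabla c|^{2p-2}|D^2c|^2$ via Lemma~\ref{llllplemkklllma4563ddfff025xxhjkloghyui}---which is precisely the paper's mechanism. With your option (a), $q_0=2/p$, you need a uniform bound on $\|\nabla c\|_{L^2}$ that is not among the lemmas already established; it can be obtained by a separate $p=1$ energy estimate (test the $c$-equation by $-\Delta c$ and estimate $\int_\Omega|u|^2|\nabla c|^2$ using $\|u\|_{L^4}$ and $\|c\|_{L^\infty}$), but this extra step must be supplied. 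Either way the argument can be completed, yet the paper's direct appeal to Lemma~\ref{llllplemkklllma4563ddfff025xxhjkloghyui} is shorter and avoids the detour.

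A minor correction: the hypothesis $\alpha\ge-1$ plays no role here. Lemma~\ref{lemma3.4} rests only on the $L^1$ bound for $n+m$ from Lemma~\ref{fvfgsdfggfflemma45}, not on Lemma~\ref{lsssemma3.5}, and the present proof never touches $n$ or $S$.
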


\begin{proof}
We begin with \dref{789hjui909klopji115}. To this end, we should estimate the right-side of \dref{789hjui909klopji115}.
Indeed, using  several integrations by parts, we find that
\begin{equation}
\begin{array}{rl}
&\disp{\int_\Omega (u \cdot\nabla  c )\nabla\cdot( |\nabla c |^{2{p}-2}\nabla c )}
\\
=&\disp{\int_\Omega (u \cdot\nabla  c ) |\nabla c |^{2{p}-2}\Delta c
+\int_\Omega (u \cdot\nabla  c )\nabla c \cdot\nabla( |\nabla c |^{2{p}-2})}
\end{array}
\label{ssddfffcz2.5ghju48156}
\end{equation}
for all $t\in(0,T_{max})$.
Here, 
 by utilizing the Young inequality as well as  Lemma \ref{fvfgsdfggfflemma45} and Lemma \ref{lemma3.4}, one can pick $C_1>0,C_2>0$ as well as $C_3>0$ and $C_4>0$ such that
\begin{equation}
\begin{array}{rl}
&\disp\int_\Omega (u \cdot\nabla  c ) |\nabla c |^{2{p}-2}\Delta c
\\
\leq&\disp{\sqrt{2}\int_\Omega |u \cdot\nabla  c | |\nabla c |^{2{p}-2}|D^2c |}
\\
\leq&\disp{\frac{1}{8}\int_\Omega  |\nabla c |^{2{p}-2}|D^2c |^2
+4\int_\Omega |u \cdot\nabla  c |^2 |\nabla c |^{2{p}-2}}
\\
\leq&\disp{\frac{1}{8}\int_\Omega  |\nabla c |^{2{p}-2}|D^2c |^2
+4\int_\Omega |u |^2 |\nabla c |^{2{p}}}
\\
\leq&\disp{\frac{1}{8}\int_\Omega  |\nabla c |^{2{p}-2}|D^2c |^2
+\frac{1}{16C_\Omega M_*^{2}} \int_\Omega|\nabla c |^{2{p}+2}+C_1\int_\Omega |u |^{2{p}+2}}\\
\leq&\disp{\frac{1}{8}\int_\Omega  |\nabla c |^{2{p}-2}|D^2c |^2
+\frac{1}{16C_\Omega M_*^{2}} \int_\Omega|\nabla c |^{2{p}+2}+C_2}
\end{array}
\label{cz2.5ghju48hjuikl451}
\end{equation}
and
\begin{equation}
\begin{array}{rl}
&\disp\int_\Omega (u \cdot\nabla  c )\nabla c \cdot\nabla( |\nabla c |^{2{m}-2})
\\
= &\disp{({p}-1)\int_\Omega (u \cdot\nabla  c ) |\nabla c |^{2({p}-2)}\nabla c \cdot
\nabla |\nabla c |^{2}}
\\
\leq &\disp{\frac{{p}-1}{8}\int_{\Omega} |\nabla c |^{2{p}-4}\left|\nabla |\nabla c |^{2}\right|^2}
\\
&+\disp{2({p}-1)\int_\Omega |u \cdot\nabla  c |^2 |\nabla c |^{2{p}-2}}
\\
\leq &\disp{\frac{({p}-1)}{2{{p}^2}}\int_{\Omega}\left|\nabla |\nabla c |^{p}\right|^2
+2({p}-1)\int_\Omega |u |^2 |\nabla c |^{2{p}}}\\
\leq &\disp{\frac{({p}-1)}{2{{p}^2}}\int_{\Omega}\left|\nabla |\nabla c |^{p}\right|^2
+\frac{1}{16C_\Omega M_*^{2}}\int_\Omega |\nabla c |^{2{p}+2}+C_3\int_\Omega |u |^{2{p}+2}}\\
\leq &\disp{\frac{({p}-1)}{2{{p}^2}}\int_{\Omega}\left|\nabla |\nabla c |^{p}\right|^2
+\frac{1}{16C_\Omega M_*^{2}}\int_\Omega |\nabla c |^{2{p}+2}+C_4.}
\end{array}
\label{cz2.5ghjddfghhu4ghjuk81}
\end{equation}
Here $C_\Omega$  and $M_*$ are the same as Lemma \ref{llllplemkklllma4563ddfff025xxhjkloghyui} and Lemma \ref{fvfgsdfggfflemma45}, respectively.
On the other hand, in light of  Lemma \ref{llllplemkklllma4563ddfff025xxhjkloghyui}, the inequality \dref{ddfgczhhhh2.5sddddghju48cfg924ghyuji} guarantees that
$$
\begin{array}{rl}
\disp\int_{\Omega} |\nabla c |^{2{p}+2}\leq& \disp{C_\Omega\int_{\Omega} |\nabla c |^{2{p}-2}|D^2 \varphi|^2\|c \|_{L^{\infty}(\Omega)}^{2}+\tilde{C}_\Omega \|c \|_{L^{\infty}(\Omega)}^{2{p}+2}}\\
\leq& \disp{C_\Omega\int_{\Omega} |\nabla c |^{2{p}-2}|D^2 \varphi|^2 M_*^{2}+\tilde{C}_\Omega M_*^{2{p}+2},}\\
\end{array}
$$
which immediately leads to
\begin{equation}\int_\Omega  |\nabla c |^{2{p}-2}|D^2c |^2\geq \frac{1}{C_\Omega M_*^{2}}\int_\Omega |\nabla c |^{2{p}+2}-C_5
\label{cz2.5ghsssjddfffu4ghjuk81}
\end{equation}
with some $C_5>0.$ Here $M_*$ is given by  \dref{ddfgczhhhh2.5sddddghju48cfg924ghyuji}.
Collecting \dref{ssddfffcz2.5ghju48156}--\dref{cz2.5ghsssjddfffu4ghjuk81}, there exists a positive constant $C_6>0$ such that
\begin{equation}
\begin{array}{rl}
&\disp{\int_\Omega (u \cdot\nabla  c )\nabla\cdot( |\nabla c |^{2{p}-2}\nabla c )}
\\
\leq&\disp{\frac{({p}-1)}{2{{p}^2}}\int_{\Omega}\left|\nabla |\nabla c |^{p}\right|^2
+\frac{1}{16C_\Omega M_*^{2}}\int_\Omega |\nabla c |^{2{p}+2}+C_4}\\
&+\disp{\frac{1}{8}\int_\Omega  |\nabla c |^{2{p}-2}|D^2c |^2
+\frac{1}{16C_\Omega M_*^{2}} |\nabla c |^{2{p}+2}+C_2}\\
\leq&\disp{\frac{({p}-1)}{2{{p}^2}}\int_{\Omega}\left|\nabla |\nabla c |^{p}\right|^2
+\frac{1}{4}\int_\Omega  |\nabla c |^{2{p}-2}|D^2c |^2
+C_6,}\\
\end{array}
\label{ssddfffcz2.5ghjeertyyyu48156}
\end{equation}
which together with \dref{789hjui909klopji115} implies that
$$
\begin{array}{rl}
\disp{\frac{1}{{2{p}}}\frac{d}{dt}\|\nabla c \|^{{{2{p}}}}_{L^{{2{p}}}(\Omega)}
+\frac{1}{4}\int_\Omega  |\nabla c |^{2{p}-2}|D^2c |^2+\frac{1}{2}\int_{\Omega} |\nabla c |^{2{p}}}
\leq&\disp{
C_7~~\mbox{for all}~~ t\in(0,T_{max}).}
\end{array}
$$
An elementary
computation shows that
\begin{equation}\label{hjui909kssddlopji115}
\disp{\|\nabla c (\cdot, t)\|_{L^{2{p}}(\Omega)}\leq C_8~~\mbox{for all}~~ t\in(0,T_{max}).}
\end{equation}
This completes the proof of Lemma \ref{lemma4563025xxhjkloghyui}.
\end{proof}
In the following we will deal with the case $\kappa\in\mathbb{R}$, which is relatively difficult. To this end, we recalling a functional inequality,
which is well-established in  Lemma 2.2 of \cite{Winklessddffr444sdddssdff51215} (see also  Lemma
3.4 of \cite{Nagaixcdf791}) upon the two-dimensional Moser-Trudinger inequality.

\begin{lemma}\label{lsssddsssddemma3.5}
 Let $\Omega \subset \mathbb{R}^2$ be a bounded domain with smooth boundary.
 Then for all $\varepsilon > 0$ there exists $M = M(\varepsilon , \Omega ) > 0$ such that if $0 \not \equiv \phi \in C_0(\Omega )$
nonnegative and $\psi \in W^{1,2}(\Omega )$, then for each $a > 0$,
\begin{equation}
\begin{array}{rl}\label{ddfsssfssssf44ss55dssssdd5eqx45xx12112ccgghh}
\disp\int_\Omega\phi | \psi | \leq \disp\frac{1}{a}\disp\int_\Omega\phi \ln \frac{\phi}{\bar{\phi}}+
\frac{(1 + \varepsilon )a}{8\pi}\cdot\biggl\{\int_\Omega\phi\biggr\}\cdot
\disp\int_\Omega| \nabla \psi |^2+M a\cdot\biggl\{ \int_\Omega\phi\biggr\}\cdot\biggl\{ \int_\Omega| \psi |\biggr\}^2
+Ma\int_\Omega\phi ,
\end{array}
\end{equation}
where $\bar{\phi} := \frac{1}{| \Omega |}\int_\Omega\phi$. 
\end{lemma}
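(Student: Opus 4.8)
I would obtain \dref{ddfsssfssssf44ss55dssssdd5eqx45xx12112ccgghh} by bridging its left-hand side to the Moser--Trudinger inequality through an \emph{entropy inequality}. Since $\phi\not\equiv0$, the function $g:=\bigl(\int_\Omega\phi\bigr)^{-1}\phi$ is a probability density on $\Omega$; writing $a|\psi|=\ln\!\bigl(\tfrac{e^{a|\psi|}}{|\Omega|\,g}\bigr)+\ln(|\Omega|\,g)$, integrating against $g\,dx$, bounding the first summand by Jensen's inequality for the concave function $\ln$ (with respect to the probability measure $g\,dx$), and using the elementary identity $\int_\Omega g\ln(|\Omega|\,g)=\bigl(\int_\Omega\phi\bigr)^{-1}\int_\Omega\phi\ln\frac{\phi}{\bar\phi}$, one arrives after multiplication by $\frac1a\int_\Omega\phi$ at
\begin{equation}\label{aux:entropy}
\int_\Omega\phi|\psi|\ \leq\ \frac{1}{a}\int_\Omega\phi\ln\frac{\phi}{\bar\phi}\ +\ \frac{1}{a}\Bigl(\int_\Omega\phi\Bigr)\ln\Bigl(\frac{1}{|\Omega|}\int_\Omega e^{a|\psi|}\Bigr)\qquad\mbox{for all}\ a>0 .
\end{equation}
(If $\psi\notin L^\infty(\Omega)$ one first applies this with $|\psi|$ replaced by $\min\{|\psi|,k\}$ and lets $k\to\infty$, the right-hand side remaining finite thanks to the Moser--Trudinger bound below.)

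The second, and decisive, ingredient is the two-dimensional Moser--Trudinger inequality in its exponential--logarithmic form (see Lemma~3.4 of \cite{Nagaixcdf791}): for every $\varepsilon>0$ there is $M_1=M_1(\varepsilon,\Omega)>0$ such that
\begin{equation}\label{aux:MT}
\ln\Bigl(\frac1{|\Omega|}\int_\Omega e^{|w|}\Bigr)\ \leq\ \frac{1+\varepsilon}{8\pi}\int_\Omega|\nabla w|^2\ +\ \frac1{|\Omega|}\int_\Omega|w|\ +\ M_1\qquad\mbox{for all}\ w\in W^{1,2}(\Omega) ,
\end{equation}
the version with $e^{|\cdot|}$ being deducible from the customary signed estimate upon noting $e^{|w|}\le e^{w}+e^{-w}$. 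I would then apply \dref{aux:MT} with $w:=a\psi$, so that $|w|=a|\psi|$ and $|\nabla w|=a|\nabla\psi|$, insert the result into \dref{aux:entropy}, estimate the resulting summand $\frac1{|\Omega|}\bigl(\int_\Omega\phi\bigr)\int_\Omega|\psi|$ by the Young inequality $\frac1{|\Omega|}\int_\Omega|\psi|\le a\bigl(\int_\Omega|\psi|\bigr)^2+\frac1{4a|\Omega|^2}$, and finally collect all the remaining constant multiples of $\int_\Omega\phi$ into a single $M=M(\varepsilon,\Omega)$; this yields \dref{ddfsssfssssf44ss55dssssdd5eqx45xx12112ccgghh}.

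The point I expect to be the crux — and the reason a more direct attempt fails — is that the logarithm must be kept \emph{outside} the exponential integral $\int_\Omega e^{a|\psi|}$ in \dref{aux:entropy}. If one instead estimated $\int_\Omega\phi|\psi|$ via the pointwise Young inequality $st\le s\ln s+e^{t-1}$ with $s=\phi/\bar\phi$ and $t=a|\psi|$, one would be left with $\frac{\bar\phi}{ea}\int_\Omega e^{a|\psi|}$, into which \dref{aux:MT} enters \emph{without} a logarithm and produces the essentially irreducible factor $\exp\!\bigl(\frac{(1+\varepsilon)a^2}{8\pi}\int_\Omega|\nabla\psi|^2\bigr)$; it is precisely the concavity of $\ln$, exploited through Jensen's inequality, that compresses this super-exponential dependence into the linear term $\frac{(1+\varepsilon)a}{8\pi}\{\int_\Omega\phi\}\int_\Omega|\nabla\psi|^2$ demanded by the statement. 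Since this reduction, together with the verification of the sharp constant, is carried out in detail in Lemma~2.2 of \cite{Winklessddffr444sdddssdff51215} and Lemma~3.4 of \cite{Nagaixcdf791}, I would record only the condensed argument above and refer the reader there.
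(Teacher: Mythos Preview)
Your proposal is correct and coincides with the paper's treatment: the paper does not give its own proof of this lemma but merely cites Lemma~2.2 of \cite{Winklessddffr444sdddssdff51215} and Lemma~3.4 of \cite{Nagaixcdf791}, and your sketch reproduces precisely the entropy/Jensen argument coupled with the Moser--Trudinger inequality that those references carry out. The only small point worth checking against the original sources is the $a$-dependence of the final constant term: your derivation naturally produces $\frac{M_1}{a}\int_\Omega\phi$ rather than $Ma\int_\Omega\phi$, so either the paper's stated form contains a harmless typo or the version of the Moser--Trudinger bound in \cite{Nagaixcdf791} (with $\|w\|_{L^1}^2$ in the exponent and no additive constant outside) has to be invoked exactly; in either case the inequality needed for the applications in Lemmas~\ref{lsssddddemma3.5} and~\ref{lsssemma3.5} follows.
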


The latter lemma also entails the following functional inequality exclusively referring to a single function, and relating its size in $n\ln n$ to the $H^1$ norm of some
logarithmic derivative.

\begin{lemma}\label{lsssddddemma3.5}
 Let $\Omega \subset \mathbb{R}^2$ be a bounded domain with smooth boundary, and let
$0 \not \equiv \phi \in C_0(\Omega )$ be nonnegative. Then for any choice of $\varepsilon > 0$,
\begin{equation}
\begin{array}{rl}\label{ddfff44ss55dssssdd5eqx45xx12112ccgghh}
&\disp{\disp\int_\Omega
\phi \disp\ln(\phi +1)}\\
\leq&\disp{\disp\frac{1 + \varepsilon}{2\pi}\cdot
\biggl\{ \int_\Omega\phi\biggr\}\cdot
\disp\int_\Omega\frac{| \nabla \phi |^2}{(\phi + 1)^2}+4M\cdot\biggl\{ \int_\Omega\phi\biggr\}^3}\\
&+\disp{\Biggl\{M - \ln \biggl\{\frac{1}{| \Omega |}\disp\int_\Omega\phi\biggr\}\Biggr\}\cdot\disp\int_\Omega\phi},
\end{array}
\end{equation}
where $M = M(\varepsilon,\Omega )> 0 $ is as in Lemma \ref{lsssddsssddemma3.5}.
\end{lemma}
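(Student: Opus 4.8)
The idea is to derive \dref{ddfff44ss55dssssdd5eqx45xx12112ccgghh} from Lemma \ref{lsssddsssddemma3.5} by making the specific choice $\psi := \ln(\phi+1)$ and then carrying out the transformation $\phi \mapsto \phi$ (i.e. applying the previous lemma to the same $\phi$, with a well-chosen $a$). Concretely, first I would note that $\psi = \ln(\phi+1) \in W^{1,2}(\Omega)$ with $\nabla\psi = \frac{\nabla\phi}{\phi+1}$, so that $\int_\Omega |\nabla\psi|^2 = \int_\Omega \frac{|\nabla\phi|^2}{(\phi+1)^2}$, which is precisely the logarithmic-derivative quantity appearing on the right of \dref{ddfff44ss55dssssdd5eqx45xx12112ccgghh}. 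The left-hand side $\int_\Omega \phi\ln(\phi+1)$ is exactly $\int_\Omega \phi|\psi|$ since $\phi\geq 0$ forces $\psi\geq 0$, so Lemma \ref{lsssddsssddemma3.5} applies verbatim and yields, for every $a>0$,
\begin{equation}
\int_\Omega \phi\ln(\phi+1) \leq \frac{1}{a}\int_\Omega \phi\ln\frac{\phi}{\bar\phi} + \frac{(1+\varepsilon)a}{8\pi}\Bigl\{\int_\Omega\phi\Bigr\}\int_\Omega\frac{|\nabla\phi|^2}{(\phi+1)^2} + Ma\Bigl\{\int_\Omega\phi\Bigr\}\Bigl\{\int_\Omega\ln(\phi+1)\Bigr\}^2 + Ma\int_\Omega\phi.
\nonumber
\end{equation}

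Next I would choose $a := 4$ so that the coefficient of the gradient term becomes $\frac{4(1+\varepsilon)}{8\pi} = \frac{1+\varepsilon}{2\pi}$, matching the claimed constant exactly. With this choice it remains to absorb the three leftover terms into the form stated in \dref{ddfff44ss55dssssdd5eqx45xx12112ccgghh}. For the entropy term $\frac{1}{a}\int_\Omega\phi\ln\frac{\phi}{\bar\phi} = \frac14\int_\Omega\phi\ln\phi - \frac14\ln\bar\phi\int_\Omega\phi$, I would use the elementary pointwise bound $\phi\ln\phi \le \phi\ln(\phi+1)$ valid for $\phi\ge 0$... wait, that is false for $\phi<1$; instead I would use $s\ln s \le s\ln(s+1)$ only where $s\ge 1$ and $s\ln s \le 0 \le$ (something) where $0\le s<1$, or more cleanly bound $\frac14\int_\Omega\phi\ln\phi \le \frac14\int_\Omega\phi\ln(\phi+1)$ which does hold since $\ln s \le \ln(s+1)$ for all $s>0$ and $\phi\ge0$ — then this term is genuinely $\le\frac14$ of the left-hand side and can be absorbed by subtracting it (moving $\frac14\int_\Omega\phi\ln(\phi+1)$ to the left and rescaling, i.e. using $\frac34\int_\Omega\phi\ln(\phi+1)\le\ldots$ then multiplying by $\frac43$, at the cost of replacing $\varepsilon$ by a slightly larger $\varepsilon'$, which is harmless). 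The term $-\frac14\ln\bar\phi\int_\Omega\phi$ contributes the $-\ln\{\frac{1}{|\Omega|}\int_\Omega\phi\}$ factor in the final line.

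The main technical obstacle is handling the term $Ma\{\int_\Omega\phi\}\{\int_\Omega\ln(\phi+1)\}^2$: I must convert $\int_\Omega\ln(\phi+1)$ into a power of $\int_\Omega\phi$ alone. The bound $\ln(s+1)\le s$ for $s\ge0$ gives $\int_\Omega\ln(\phi+1)\le\int_\Omega\phi$, hence $Ma\{\int_\Omega\phi\}\{\int_\Omega\ln(\phi+1)\}^2 \le 4M\{\int_\Omega\phi\}^3$, which is exactly the $4M\{\int_\Omega\phi\}^3$ term in \dref{ddfff44ss55dssssdd5eqx45xx12112ccgghh}. The final leftover $Ma\int_\Omega\phi = 4M\int_\Omega\phi$ is then merged into the last bracket together with the $-\ln\bar\phi$ contribution, producing the coefficient $M-\ln\{\frac{1}{|\Omega|}\int_\Omega\phi\}$ (after rescaling $M$ by the absorption factor $\frac43$, again harmless since $M=M(\varepsilon,\Omega)$ may be enlarged). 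Assembling these estimates and renaming the enlarged $\varepsilon'$ back to $\varepsilon$ completes the proof; the only place demanding care is bookkeeping the constants through the absorption of $\frac14\int_\Omega\phi\ln(\phi+1)$, since that is what forces the slight relaxation of $\varepsilon$.
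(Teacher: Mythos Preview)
Your overall strategy---apply Lemma~\ref{lsssddsssddemma3.5} with $\psi:=\ln(\phi+1)$, then absorb the entropy term via $\phi\ln\phi\le\phi\ln(\phi+1)$---is exactly the intended derivation (the paper itself omits the proof, merely stating that the lemma follows from Lemma~\ref{lsssddsssddemma3.5}). However, your choice $a=4$ does not work. After moving $\tfrac{1}{a}\int_\Omega\phi\ln(\phi+1)$ to the left and rescaling by $\tfrac{a}{a-1}$, the gradient coefficient becomes
\[
\frac{a}{a-1}\cdot\frac{(1+\varepsilon)a}{8\pi}=\frac{(1+\varepsilon)a^2}{8\pi(a-1)},
\]
which for $a=4$ equals $\tfrac{2(1+\varepsilon)}{3\pi}$. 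Writing this as $\tfrac{1+\varepsilon'}{2\pi}$ forces $\varepsilon'=\tfrac{1+4\varepsilon}{3}\ge\tfrac13$; this is \emph{not} ``slightly larger'', and you cannot recover the inequality for arbitrary $\varepsilon>0$ by renaming.

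The fix is to minimize $\tfrac{a^2}{a-1}$ over $a>1$: the minimum occurs at $a=2$, where $\tfrac{a^2}{8(a-1)}=\tfrac12$. With $a=2$ the absorption leaves $\tfrac12\int_\Omega\phi\ln(\phi+1)$ on the left, and after multiplying by $2$ you obtain precisely the gradient coefficient $\tfrac{1+\varepsilon}{2\pi}$, the term $-\ln\bar\phi\cdot\int_\Omega\phi$ with coefficient $1$, and $4M$ in front of both $\{\int_\Omega\phi\}^3$ and $\int_\Omega\phi$. This matches \dref{ddfff44ss55dssssdd5eqx45xx12112ccgghh} exactly, except that the last line of the stated inequality should read $4M$ rather than $M$---a harmless typo in the paper, since only the existence of some $M(\varepsilon,\Omega)$ is used downstream. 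No relabeling of $\varepsilon$ is needed at all.
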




To deduce the eventual $L^p$ boundedness of $n$ for any $p > 1$, we need to derive further regularity properties of the solutions to \dref{1.dddduiikkldffdffg1}, \dref{ccvvx1.73142sdd6677gg}--\dref{ccvvx1.ddfff731426677gg}. To this end,
s a direct application of Lemma \ref{fvfgsdfggfflemma45} and some careful analysis, we have the following corollary, which will be used in the sequel to obtain the estimates of $\nabla c$.

\begin{lemma}\label{ssdddlemmaghjdfgggffggssddgghhmk4563025xxhjklojjkkk}
Let $\kappa\in \mathbb{R}$ as well as $|S(n)|\leq C_S(1+n)^{-\alpha}$ with $\alpha\geq-1$.
Then there exists $\varrho_1>0$ 
such that the solution of \dref{1.dddduiikkldffdffg1}, \dref{ccvvx1.73142sdd6677gg}--\dref{ccvvx1.ddfff731426677gg} satisfies
\begin{equation}
\begin{array}{rl}
\disp\frac{1}{{2}}\disp\frac{d}{dt}\|\nabla{c  }\|^{{{2}}}_{L^{{2}}(\Omega)}+\frac{1}{2}
\int_{\Omega} |\Delta c  |^2+ \int_{\Omega} | \nabla c  |^2
\leq&\disp{\varrho_1\|\nabla u  \|_{L^{2}(\Omega)}^2+\varrho_1~~\mbox{for all}~~ t\in(0,T_{max}).}\\
\end{array}
\label{czfvgb2.5ghhjuyuccvviihjj}
\end{equation}
%
\end{lemma}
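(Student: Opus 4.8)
The plan is to test the second equation in (1.7) by $-\Delta c$ and control the resulting terms, using the key structural feature that now $\kappa\in\mathbb R$ so no $L^p$ bound on $u$ is available, only the natural energy bound coming from the Navier--Stokes part. First I would differentiate or directly multiply $c_t+u\cdot\nabla c=\Delta c-c+m$ by $-\Delta c$ and integrate over $\Omega$; using $\partial_\nu c=0$ and the identity $\int_\Omega \nabla c\cdot\nabla c_t=\tfrac12\frac{d}{dt}\|\nabla c\|_{L^2}^2$, this yields
\begin{equation*}
\frac12\frac{d}{dt}\|\nabla c\|_{L^2(\Omega)}^2+\int_\Omega|\Delta c|^2+\int_\Omega|\nabla c|^2
=\int_\Omega(u\cdot\nabla c)\Delta c-\int_\Omega m\,\Delta c.
\end{equation*}
The term $-\int_\Omega m\Delta c$ is handled by Young's inequality together with the uniform $L^\infty$ bound on $m$ from Lemma \ref{fvfgsdfggfflemma45}, absorbing $\tfrac18\int_\Omega|\Delta c|^2$ and leaving a constant.

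Next I would treat the convective contribution $\int_\Omega(u\cdot\nabla c)\Delta c$, which is the crux. The idea is to bound it by $\|u\|_{L^4(\Omega)}\|\nabla c\|_{L^4(\Omega)}\|\Delta c\|_{L^2(\Omega)}$ via H\"older, then invoke two-dimensional Gagliardo--Nirenberg inequalities: $\|\nabla c\|_{L^4(\Omega)}^2\le C\|\Delta c\|_{L^2(\Omega)}\|c\|_{L^\infty(\Omega)}+C\|c\|_{L^\infty(\Omega)}^2$ (a consequence of Lemma \ref{llllplemkklllma4563ddfff025xxhjkloghyui} with $p=1$, or the standard elliptic-type interpolation), and $\|u\|_{L^4(\Omega)}^2\le C\|\nabla u\|_{L^2(\Omega)}\|u\|_{L^2(\Omega)}+C\|u\|_{L^2(\Omega)}^2$. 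Since $\|c\|_{L^\infty}\le M_*$ and $\|u\|_{L^2(\Omega)}$ is uniformly bounded (the basic energy estimate for the velocity field, available for all $\kappa\in\mathbb R$ because $\int_\Omega(u\cdot\nabla)u\cdot u=0$ and the forcing $(n+m)\nabla\phi$ is controlled in $L^1$ — standard and implicit in the literature cited), one obtains a bound of the form $\int_\Omega(u\cdot\nabla c)\Delta c\le \varepsilon\|\Delta c\|_{L^2}^2+C_\varepsilon\|\nabla u\|_{L^2}^2\,(\|\Delta c\|_{L^2}+1)$, and with another application of Young's inequality — keeping $\|\nabla u\|_{L^2}^2$ linear, as the statement demands — this reduces to $\tfrac12\|\Delta c\|_{L^2}^2+\varrho_1\|\nabla u\|_{L^2}^2+\varrho_1$ after choosing $\varepsilon$ small enough to absorb into the $\int_\Omega|\Delta c|^2$ on the left.

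The main obstacle is precisely this convective term: unlike in the Stokes case (Lemma \ref{lemma3.4}) where a uniform $L^p$ bound on $u$ with large $p$ is at hand, here we can only afford the $L^2$-in-space norm of $u$ together with the $L^2$ norm of $\nabla u$, and the latter is only expected to be controlled after time integration against the Navier--Stokes energy identity. So the delicate point is to arrange the estimate so that $\nabla u$ appears exactly to the first power (not squared times $\|\Delta c\|^2$, which would be uncontrollable), which forces a careful splitting: use the Gagliardo--Nirenberg factor on $\nabla c$ to produce a single power of $\|\Delta c\|_{L^2}$ that can be absorbed, and dump everything else onto $\|\nabla u\|_{L^2}^2$ via Young. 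I would also double-check that the boundary term arising from integrating $\int_\Omega(u\cdot\nabla c)\Delta c$ by parts (if one chooses to integrate by parts rather than bound directly) vanishes or is harmless; the cleanest route is to avoid integration by parts here and bound $|\int_\Omega(u\cdot\nabla c)\Delta c|$ directly by H\"older as above. Once (3.20) is established, it will later be combined with the Navier--Stokes energy inequality (which supplies $\int_0^t\|\nabla u\|_{L^2}^2$ in a controlled way) to close a coupled Gr\"onwall-type argument for $\|\nabla c\|_{L^2}^2$, and then bootstrapped via Lemma \ref{789lemma4563025xxhjkloghyui} to higher $L^p$ norms of $\nabla c$.
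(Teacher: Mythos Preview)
Your approach has a genuine gap: you assume that $\|u\|_{L^2(\Omega)}$ is uniformly bounded on $(0,T_{max})$, but this is not available at this stage of the argument and is in fact circular. With $\kappa\in\mathbb R$, the forcing $(n+m)\nabla\phi$ is only known to lie in $L^1(\Omega)$ (since for $n$ we have merely the mass bound from Lemma~\ref{fvfgsdfggfflemma45}), and in two dimensions an $L^1$ forcing does \emph{not} close the Navier--Stokes energy identity: $W^{1,2}_0$ fails to embed into $L^\infty$, so one cannot absorb $\int_\Omega n\nabla\phi\cdot u$ into $\|\nabla u\|_{L^2}^2$ plus a constant. This is precisely why the paper's Lemma~\ref{33444lemmaghjssddgghhmk4563025xxhjklojjkkk} invokes a Moser--Trudinger type estimate and produces the coupled inequality $\frac{d}{dt}\|u\|_{L^2}^2+\|\nabla u\|_{L^2}^2\le \varrho_2\int_\Omega n\ln n+\varrho_2$; the uniform bound on $\|u\|_{L^2}$ only emerges later, in Lemma~\ref{lsssemma3.5}, after the present lemma is combined with Lemma~\ref{33444lemmaghjssddgghhmk4563025xxhjklojjkkk} into a joint functional. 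Without $\|u\|_{L^2}$ bounded, your H\"older route gives (via Poincar\'e and Ladyzhenskaya) $\|u\|_{L^4}\le C\|\nabla u\|_{L^2}$, and tracing the exponents through Young's inequality leaves $\|\nabla u\|_{L^2}^4$ on the right, not the linear $\|\nabla u\|_{L^2}^2$ that the statement requires and that the subsequent coupling needs.

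The paper avoids this entirely by doing exactly what you decided against: it integrates $\int_\Omega(u\cdot\nabla c)\Delta c$ by parts. Using $\partial_\nu c=0$, $u|_{\partial\Omega}=0$ and $\nabla\cdot u=0$, one obtains
\[
\int_\Omega(u\cdot\nabla c)\Delta c=-\int_\Omega\nabla c\cdot\nabla(u\cdot\nabla c)=-\int_\Omega\nabla c\cdot(\nabla u\cdot\nabla c),
\]
since the remaining piece $\int_\Omega\nabla c\cdot(D^2c\,u)=\tfrac12\int_\Omega u\cdot\nabla|\nabla c|^2=0$. The point is that $u$ itself has disappeared; only $\nabla u$ remains. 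Then Cauchy--Schwarz gives $|\,\cdot\,|\le\|\nabla u\|_{L^2}\|\nabla c\|_{L^4}^2$, Gagliardo--Nirenberg with the $L^\infty$ bound on $c$ yields $\|\nabla c\|_{L^4}^2\le C\|\Delta c\|_{L^2}+C$, and a single Young inequality produces $\tfrac14\|\Delta c\|_{L^2}^2+C\|\nabla u\|_{L^2}^2+C$ --- exactly the form \dref{czfvgb2.5ghhjuyuccvviihjj} --- with no appeal to any bound on $u$.
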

\begin{proof}
We multiply the second equation in \dref{1.dddduiikkldffdffg1} by $-\Delta c  $ and integrate by parts to see that
\begin{equation}
\begin{array}{rl}
&\disp\frac{1}{{2}}\disp\frac{d}{dt}\|\nabla{c  }\|^{{{2}}}_{L^{{2}}(\Omega)}+
\int_{\Omega} |\Delta c  |^2+ \int_{\Omega} | \nabla c  |^2
\\
=&\disp{-\int_{\Omega} m  \Delta c  +\int_{\Omega} (u  \cdot\nabla c )\Delta c  }
\\
=&\disp{-\int_{\Omega} m  \Delta c -\int_{\Omega}\nabla c  \nabla (u  \cdot\nabla c )}
\\
=&\disp{-\int_{\Omega} m  \Delta c -\int_{\Omega}\nabla c  (\nabla u  \cdot\nabla c )~~\mbox{for all}~~ t\in(0,T_{max}),}
\end{array}
\label{hhxxcsssdfvvjjczddfdddfff2.5}
\end{equation}
where we have used the fact that
$$
\disp{\int_{\Omega}\nabla c  \cdot(D^2 c  \cdot u  )
=\frac{1}{2}\int_{\Omega}  u  \cdot\nabla|\nabla c  |^2=0
~~\mbox{for all}~~ t\in(0,T_{max}).}
$$
On the other hand,  the Young inequality in conjunction with  \dref{ddczhjjjj2.5ghju48cfg9ssdd24} ensures that
\begin{equation}
\begin{array}{rl}
\disp-\int_{\Omega} m \Delta c \leq&\disp{\int_{\Omega} m^2 +\frac{1}{4}\int_{\Omega}|\Delta c |^2  }\\
\leq&\disp{|\Omega|\| m_0\|^2_{L^\infty(\Omega)}+\frac{1}{4}\int_{\Omega}|\Delta c |^2  ~~\mbox{for all}~~ t\in(0, T_{max}).}\\
\end{array}
\label{ssdddaassshhxxcdfvvjjcz2.5}
\end{equation}
In the last summand in \dref{hhxxcsssdfvvjjczddfdddfff2.5}, we use the Cauchy-Schwarz inequality to obtain
\begin{equation}
\begin{array}{rl}
\disp-\int_{\Omega}\nabla c   (\nabla u  \cdot\nabla c )
\leq&\disp{\|\nabla u  \|_{L^{2}(\Omega)}\|\nabla c  \|_{L^{4}(\Omega)}^2~~\mbox{for all}~~ t\in(0,T_{max}).}
\end{array}
\label{hhxxcsssdfvvjjcddssdddffzddfdddfff2.5}
\end{equation}
Now thanks to \dref{ddfgczhhhh2.5sddddghju48cfg924ghyuji} and in view of the Gagliardo-Nirenberg inequality, we can find $C_1> 0$ and  $C_2> 0$ fulfilling
%
\begin{equation}
\begin{array}{rl}
\disp \|\nabla c  \|_{L^{4}(\Omega)}^2\leq&\disp{C_{1}\|\Delta c  \|_{L^{2}(\Omega)}\|c  \|_{L^{\infty}(\Omega)}+C_{1}\|c  \|_{L^{\infty}(\Omega)}^2}\\
\leq&\disp{C_{2}\|\Delta c  \|_{L^{2}(\Omega)}+C_{2}
~~\mbox{for all}~~ t\in(0,T_{max}),}\\
\end{array}
\label{hhxxcsssdfvvjjcddfffddffzddfdddfff2.5}
\end{equation}
where by the  Young  inequality,
\begin{equation}
\begin{array}{rl}
&\disp-\int_{\Omega}\nabla c  (\nabla u  \cdot\nabla c )
\\
\leq&\disp{\|\nabla u  \|_{L^{2}(\Omega)}[C_{2}\|\Delta c  \|_{L^{2}(\Omega)}+C_{2}]}
\\
\leq&\disp{C_{2}^2\|\nabla u  \|_{L^{2}(\Omega)}^2
+\frac{1}{4}\|\Delta c  \|_{L^{2}(\Omega)}^2+C_3~~\mbox{for all}~~ t\in(0,T_{max}).}
\end{array}
\label{hhxxcsssdfvvjjcddffzddfdddfff2.5}
\end{equation}
Inserting \dref{ssdddaassshhxxcdfvvjjcz2.5} and \dref{hhxxcsssdfvvjjcddffzddfdddfff2.5} into  \dref{hhxxcsssdfvvjjczddfdddfff2.5}, we have
\begin{equation}
\begin{array}{rl}
\disp\frac{1}{{2}}\disp\frac{d}{dt}\|\nabla{c  }\|^{{{2}}}_{L^{{2}}(\Omega)}+\frac{1}{2}
\int_{\Omega} |\Delta c  |^2+ \int_{\Omega} | \nabla c  |^2
\leq&\disp{C_{2}^2\|\nabla u  \|_{L^{2}(\Omega)}^2+C_4.}
\end{array}
\label{hhxxcsssdfvvjjczddfssdddddfff2.5}
\end{equation}
As a consequence of \dref{hhxxcsssdfvvjjczddfssdddddfff2.5}, \dref{czfvgb2.5ghhjuyuccvviihjj} is valid by a choice of
$\varrho_1:= \max\{C_{2}^2, C_4\}$.
 \end{proof}

Now, we try to analyze the evolution of $\int_{\Omega}  | {u }|^2$, which contributes to absorbing
$\int_{\Omega}  |\nabla {u }|^2$ on the right-hand side of \dref{czfvgb2.5ghhjuyuccvviihjj}.
And the proof is almost precisely like that of Lemma 2.7 of \cite{Winklessddffr444sdddssdff51215} (see also Lemma 3.2 of \cite{Wangjjk5566ddfggghjjkk1}) except that the $L^1$-boundedness of $n $, instead of the $L^1$-conservation therein, is now encountered. And therefore, we omit it.
\begin{lemma}\label{33444lemmaghjssddgghhmk4563025xxhjklojjkkk}
Let $\kappa\in \mathbb{R}$ as well as $|S(n)|\leq C_S(1+n)^{-\alpha}$ with $\alpha\geq-1$.
Then there exists $\varrho_2>0$  such that the solution of \dref{1.dddduiikkldffdffg1}, \dref{ccvvx1.73142sdd6677gg}--\dref{ccvvx1.ddfff731426677gg} satisfies
\begin{equation}
\begin{array}{rl}
&\disp{\frac{d}{dt}\int_{\Omega}  | {u }|^2+\int_{\Omega}  |\nabla {u }|^2\leq \varrho_2 \int_{\Omega} {n }\ln{n }+\varrho_2~~~ \mbox{for all}~ t\in(0,T_{max}).}\\
\end{array}
\label{333333czfvgb2.5ghhjusssyuccvviihjj}
\end{equation}
\end{lemma}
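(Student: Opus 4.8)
The plan is to test the fourth (Navier-)Stokes equation of \dref{1.dddduiikkldffdffg1} by $u$ in $L^2(\Omega)$ and to exploit the cancellation of the convective term. Multiplying $u_t+\kappa(u\cdot\nabla)u+\nabla P=\Delta u+(n+m)\nabla\phi$ by $u$, integrating over $\Omega$, and using $\nabla\cdot u=0$ together with the no-slip boundary condition $u=0$ on $\partial\Omega$, we get $\int_\Omega(u\cdot\nabla)u\cdot u=\frac12\int_\Omega u\cdot\nabla|u|^2=0$ and $\int_\Omega\nabla P\cdot u=0$, so that
\begin{equation}
\frac12\frac{d}{dt}\int_\Omega|u|^2+\int_\Omega|\nabla u|^2=\int_\Omega(n+m)\nabla\phi\cdot u\qquad\mbox{for all }t\in(0,T_{max}).
\label{planeq1}
\end{equation}
First I would dispose of the $m$-contribution: since $\|m(\cdot,t)\|_{L^2(\Omega)}\le\|m_0\|_{L^2(\Omega)}$ by \dref{ddczhjjjj2.5ghju48cfg9ssdd24} and $\nabla\phi\in L^\infty(\Omega)$ by \dref{dd1.1fghyuisdakkkllljjjkk}, Young's inequality bounds $\int_\Omega m\nabla\phi\cdot u$ by $\frac18\int_\Omega|u|^2+C\|m_0\|_{L^2(\Omega)}^2\|\nabla\phi\|_{L^\infty(\Omega)}^2$; the $L^2$ term here can in turn be absorbed using the Poincar\'e inequality $\int_\Omega|u|^2\le C_P\int_\Omega|\nabla u|^2$ (valid because $u|_{\partial\Omega}=0$), at the cost of a small fraction of $\int_\Omega|\nabla u|^2$.

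The main work is the term $\int_\Omega n\,\nabla\phi\cdot u$. The natural route is to estimate $|\int_\Omega n\nabla\phi\cdot u|\le\|\nabla\phi\|_{L^\infty(\Omega)}\int_\Omega n|u|$ and then to apply the Moser--Trudinger type functional inequality of Lemma \ref{lsssddsssddemma3.5} with the choices $\phi:=n(\cdot,t)$ (which is nonnegative, and may be assumed not identically zero) and $\psi:=u$ componentwise, or more directly with $|u|\in W^{1,2}(\Omega)$. Taking $a>0$ as the free parameter there and $\varepsilon>0$ fixed small, Lemma \ref{lsssddsssddemma3.5} yields
\[
\int_\Omega n|u|\le\frac1a\int_\Omega n\ln\frac{n}{\bar n}+\frac{(1+\varepsilon)a}{8\pi}\Big(\int_\Omega n\Big)\int_\Omega|\nabla u|^2+Ma\Big(\int_\Omega n\Big)\Big(\int_\Omega|u|\Big)^2+Ma\int_\Omega n.
\]
Because $\int_\Omega n\le\int_\Omega n_0$ for all $t$ by \dref{ddfgczhhhh2.5ghjjjssddju48cfg924ghyuji}--\dref{ddfgczhhhh2.ssss5ghjjjssddju48cfg924ghyuji}, the coefficient of $\int_\Omega|\nabla u|^2$ is bounded by $\frac{(1+\varepsilon)a}{8\pi}\|n_0\|_{L^1(\Omega)}$; I would choose $a$ small enough (depending only on $\|n_0\|_{L^1(\Omega)}$, $\|\nabla\phi\|_{L^\infty(\Omega)}$ and $\varepsilon$) that $\|\nabla\phi\|_{L^\infty(\Omega)}\cdot\frac{(1+\varepsilon)a}{8\pi}\|n_0\|_{L^1(\Omega)}\le\frac14$, so that after inserting this estimate into \dref{planeq1} the gradient term on the right is absorbed into $\int_\Omega|\nabla u|^2$ on the left. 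The remaining terms are controlled as follows: $\frac1a\int_\Omega n\ln\frac{n}{\bar n}$ is rewritten, using $\int_\Omega n=\int_\Omega\bar n$ and $\ln\frac{n}{\bar n}=\ln\frac{n+1}{\bar n}+\ln\frac{n}{n+1}\le\ln(n+1)+\ln\frac1{\bar n}+0$ together with $n\ln(n+1)\ge0$, as a constant multiple of $\int_\Omega n\ln(n+1)$ plus a $t$-independent constant (here $\bar n$ is constant in time by \dref{ddfgczhhhh2.5ghjjjssddju48cfg924ghyuji}, and is positive provided $n_0\not\equiv0$; the degenerate case $n_0\equiv0$ is trivial since then $n\equiv0$); the term $Ma(\int_\Omega n)(\int_\Omega|u|)^2$ is handled by $(\int_\Omega|u|)^2\le|\Omega|\int_\Omega|u|^2\le|\Omega|C_P\int_\Omega|\nabla u|^2$, whose coefficient $Ma\|n_0\|_{L^1(\Omega)}|\Omega|C_P\|\nabla\phi\|_{L^\infty(\Omega)}$ is again made $\le\frac18$ by shrinking $a$ further; and $Ma\int_\Omega n\le Ma\|n_0\|_{L^1(\Omega)}$ is a constant.

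Putting all estimates into \dref{planeq1}, choosing $a$ small in the finitely many smallness conditions above (all of which depend only on fixed data), and collecting the absorbed gradient fractions, I obtain
\begin{equation}
\frac{d}{dt}\int_\Omega|u|^2+\int_\Omega|\nabla u|^2\le\varrho_2\int_\Omega n\ln n+\varrho_2\qquad\mbox{for all }t\in(0,T_{max})
\label{planeq2}
\end{equation}
with a suitable $\varrho_2>0$, where I have absorbed the finitely many additive constants into $\varrho_2$ and used $n\ln n\ge-e^{-1}|\Omega|$ to pass freely between $\int_\Omega n\ln(n+1)$ and $\int_\Omega n\ln n$ up to a constant; this is precisely \dref{333333czfvgb2.5ghhjusssyuccvviihjj}. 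The only delicate point — and the reason the statement singles out $\kappa\in\mathbb R$ and only the $L^1$-bound on $n$ rather than exact conservation — is that the coefficient of $\int_\Omega|\nabla u|^2$ produced by the Moser--Trudinger inequality is proportional to $\|n\|_{L^1(\Omega)}$, so it is the uniform bound $\|n(\cdot,t)\|_{L^1(\Omega)}\le\|n_0\|_{L^1(\Omega)}$ from Lemma \ref{fvfgsdfggfflemma45} that makes the absorption legitimate; the argument is otherwise identical to Lemma 2.7 of \cite{Winklessddffr444sdddssdff51215} (and Lemma 3.2 of \cite{Wangjjk5566ddfggghjjkk1}), as indicated in the paper, so I would refer to those sources for the routine remaining computations.
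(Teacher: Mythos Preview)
Your approach is correct and is exactly the argument the paper has in mind (the paper does not give a proof but simply refers to Lemma 2.7 of \cite{Winklessddffr444sdddssdff51215}): test the $u$-equation by $u$, use the cancellation of the convective term, and control $\int_\Omega n|u|$ via the Moser--Trudinger type inequality of Lemma \ref{lsssddsssddemma3.5}, choosing the parameter $a$ small depending only on $\|n_0\|_{L^1(\Omega)}$ so as to absorb the $\int_\Omega|\nabla u|^2$ contribution.

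One small correction: you claim ``$\bar n$ is constant in time by \dref{ddfgczhhhh2.5ghjjjssddju48cfg924ghyuji}'', but this is false here---in the coral fertilization system $\frac{d}{dt}\int_\Omega n=-\int_\Omega nm\le 0$, so $\bar n(t)$ is merely nonincreasing (as you yourself note correctly at the end, and as the paper emphasizes when it says ``the $L^1$-boundedness of $n$, instead of the $L^1$-conservation therein, is now encountered''). Your estimate still goes through, since the potentially problematic term $(\ln\tfrac{1}{\bar n})\int_\Omega n=|\Omega|\,\bar n\ln\tfrac{1}{\bar n}$ is bounded uniformly for $\bar n\in(0,\|n_0\|_{L^1(\Omega)}/|\Omega|]$; just replace the incorrect sentence by this observation.
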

Collecting the result of Lemma \ref{fvfgsdfggfflemma45}, Lemma \ref{lsssddddemma3.5} to Lemma \ref{33444lemmaghjssddgghhmk4563025xxhjklojjkkk} and by using some careful analysis, we thus establish the following energy-type
inequality in the case of $\kappa\in\mathbb{R}$.

\begin{lemma}\label{lsssemma3.5}
Let $\kappa\in \mathbb{R}$ as well as $|S(n)|\leq C_S(1+n)^{-\alpha}$ with $\alpha\geq-\frac{1}{2}$.
one can find $C > 0$
such that

\begin{equation}
\begin{array}{rl}\label{ddfff44ss55ddd5eqx45xx12112ccgghh}
\disp\int_\Omega
\bigl[\ln(n  + 1)+  {u ^2}+ | \nabla c  |^2\bigr]
\leq &
C~~~~ \mbox{for all}~ t\in(0,T_{max})\\
\end{array}
\end{equation}
as well as
\begin{equation}
\begin{array}{rl}\label{ddfff44ss555eqx45xx12112ccgghh}
 \disp\int_{(t-\tau)_+}^t\int_\Omega \bigl[{}{(n  + 1)\ln (n  + 1)}+ |\Delta c  |^2+  | \nabla u  |^2+\frac{| \nabla n  |^2}{(n  + 1)^2}\bigr]
\leq &  C~~~\mbox{for all}~~ t \in (0,T_{max}-\tau)\\
\end{array}
\end{equation}
and
\begin{equation}
\begin{array}{rl}\label{ssddfff44ss555eqx45xx12112ccgghh}
 \disp\int_{(t-\tau)_+}^t\int_\Omega \bigl[ | \nabla c  |^4\bigr]
\leq &  C~~~\mbox{for all}~~ t \in (0,T_{max}-\tau),\\
\end{array}
\end{equation}
where $\tau$ is given by  \dref{jvgxddrg}.
\end{lemma}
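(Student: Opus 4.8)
\textbf{Proof plan for Lemma \ref{lsssemma3.5} (the $\kappa\in\mathbb{R}$ energy inequality).}

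The strategy is to construct a coupled energy functional combining the three quantities appearing in \dref{ddfff44ss55ddd5eqx45xx12112ccgghh}, namely $\int_\Omega \ln(n+1)$, $\int_\Omega u^2$, and $\int_\Omega |\nabla c|^2$, and to derive a single differential inequality for it. First I would test the first equation of \dref{1.dddduiikkldffdffg1} by $-\frac{1}{n+1}$ (equivalently, track $\frac{d}{dt}\int_\Omega \ln(n+1)$): this produces a good dissipation term $-\int_\Omega \frac{|\nabla n|^2}{(n+1)^2}$, a cross term of the form $\int_\Omega \frac{n S(n)}{(n+1)^2}\nabla n\cdot\nabla c$, and a term $\int_\Omega \frac{nm}{n+1}$ which is harmless by \dref{ddczhjjjj2.5ghxxccju48cfg9ssdd24}. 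Using $|S(n)|\le C_S(1+n)^{-\alpha}$, the cross term is bounded — via Young's inequality — by $\varepsilon\int_\Omega\frac{|\nabla n|^2}{(n+1)^2}+C\int_\Omega (1+n)^{-2\alpha}|\nabla c|^2$, and here the hypothesis $\alpha\ge -\frac12$ is exactly what makes $(1+n)^{-2\alpha}\le (1+n)^{1}$ usable, so the last integral is controlled by $\int_\Omega (n+1)|\nabla c|^2$. That integral I would in turn handle by the Gagliardo–Nirenberg-type interpolation and the already-established bounds from Lemma \ref{ssdddlemmaghjdfgggffggssddgghhmk4563025xxhjklojjkkk}, at the cost of a term proportional to $\int_\Omega|\Delta c|^2$ plus lower order (using $\|c\|_{L^\infty}\le M_*$ from Lemma \ref{fvfgsdfggfflemma45}).

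Next I would add suitable positive multiples of the inequalities already available: \dref{czfvgb2.5ghhjuyuccvviihjj} from Lemma \ref{ssdddlemmaghjdfgggffggssddgghhmk4563025xxhjklojjkkk}, which gives $\frac12\frac{d}{dt}\|\nabla c\|_{L^2}^2$ controlled by $\varrho_1\|\nabla u\|_{L^2}^2+\varrho_1$ minus dissipation $\frac12\int_\Omega|\Delta c|^2+\int_\Omega|\nabla c|^2$; and \dref{333333czfvgb2.5ghhjusssyuccvviihjj} from Lemma \ref{33444lemmaghjssddgghhmk4563025xxhjklojjkkk}, which gives $\frac{d}{dt}\int_\Omega u^2+\int_\Omega|\nabla u|^2\le \varrho_2\int_\Omega n\ln n+\varrho_2$. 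Choosing the weights so that the $\|\nabla u\|_{L^2}^2$ produced by the $\nabla c$-equation is absorbed by the $u$-equation dissipation, and the $\int_\Omega|\Delta c|^2$ produced by the $n\ln n$ estimate is absorbed by the $\nabla c$-dissipation, I am left with a differential inequality of the shape $\frac{d}{dt}\mathcal{E}(t)+\mathcal{D}(t)\le C_1\int_\Omega n\ln(n+1)+C_1$, where $\mathcal{E}=\int_\Omega[\ln(n+1)+u^2+|\nabla c|^2]$ and $\mathcal{D}$ contains $\int_\Omega (n+1)\ln(n+1)$ (coming from combining the $\ln(n+1)$ dissipation with a negative multiple of $\int_\Omega\ln(n+1)$), together with $\int_\Omega|\Delta c|^2$, $\int_\Omega|\nabla u|^2$, and $\int_\Omega\frac{|\nabla n|^2}{(n+1)^2}$. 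The crucial point — and the main obstacle — is closing the loop on the $\int_\Omega n\ln(n+1)$ term on the right: here Lemma \ref{lsssddddemma3.5} enters, bounding $\int_\Omega n\ln(n+1)$ by $\frac{1+\varepsilon}{2\pi}\{\int_\Omega n\}\int_\Omega\frac{|\nabla n|^2}{(n+1)^2}+C$, and since $\int_\Omega n\le\|n_0\|_{L^1}$ by \dref{ddfgczhhhh2.5ghjjjssddju48cfg924ghyuji}, choosing $\varepsilon$ small makes the coefficient of $\int_\Omega\frac{|\nabla n|^2}{(n+1)^2}$ strictly smaller than the dissipation coefficient available, so this term is absorbed.

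With the absorption carried out, I obtain $\frac{d}{dt}\mathcal{E}(t)+\kappa_0\,\mathcal{D}(t)\le C$ for some $\kappa_0>0$, and since $\int_\Omega\ln(n+1)\le C\int_\Omega(n+1)\ln(n+1)+C$ one also gets $\mathcal{E}(t)\le C\mathcal{D}(t)+C$; applying Lemma \ref{lemma630} (the linear differential-inequality lemma, with $\alpha=1$) then yields the uniform bound \dref{ddfff44ss55ddd5eqx45xx12112ccgghh}, while integrating $\frac{d}{dt}\mathcal{E}+\kappa_0\mathcal{D}\le C$ over $((t-\tau)_+,t)$ and using the now-established boundedness of $\mathcal{E}$ gives \dref{ddfff44ss555eqx45xx12112ccgghh}. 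Finally, \dref{ssddfff44ss555eqx45xx12112ccgghh} follows from \dref{ddfff44ss555eqx45xx12112ccgghh} by the interpolation $\int_\Omega|\nabla c|^4\le C\|\Delta c\|_{L^2}^2\|c\|_{L^\infty}^2+C\|c\|_{L^\infty}^4\le C\|\Delta c\|_{L^2}^2+C$ together with the space-time bound on $\int_\Omega|\Delta c|^2$ from \dref{ddfff44ss555eqx45xx12112ccgghh} and the $L^\infty$ bound on $c$. I expect the delicate bookkeeping of constants in the simultaneous absorption of $\|\nabla u\|_{L^2}^2$, $\|\Delta c\|_{L^2}^2$ and $\int_\Omega\frac{|\nabla n|^2}{(n+1)^2}$ to be the technically heaviest part, with the role of $\alpha\ge-\frac12$ entering precisely in keeping $(1+n)^{-2\alpha}$ subordinate to $(n+1)\ln(n+1)$-type dissipation.
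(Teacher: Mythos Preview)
Your overall architecture is correct and matches the paper closely: combine the $\ln(n+1)$ inequality with Lemmas \ref{ssdddlemmaghjdfgggffggssddgghhmk4563025xxhjklojjkkk} and \ref{33444lemmaghjssddgghhmk4563025xxhjklojjkkk}, feed $\int_\Omega n\ln(n+1)$ back through Lemma \ref{lsssddddemma3.5}, apply Lemma \ref{lemma630}, and finish \dref{ssddfff44ss555eqx45xx12112ccgghh} by the Gagliardo--Nirenberg bound $\|\nabla c\|_{L^4}^4\le C\|\Delta c\|_{L^2}^2+C$.

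The gap is in your treatment of the chemotaxis cross term. After Young's inequality you arrive at $C\int_\Omega (1+n)^{-2\alpha}|\nabla c|^2 \le C\int_\Omega (n+1)|\nabla c|^2$, and you assert this can be absorbed ``at the cost of a term proportional to $\int_\Omega|\Delta c|^2$ plus lower order''. That step does not go through with the information available: at this stage the only control on $n$ is the $L^1$ bound \dref{ddfgczhhhh2.5ghjjjssddju48cfg924ghyuji}, and no interpolation reduces $\int_\Omega (n+1)|\nabla c|^2$ to $\|\Delta c\|_{L^2}^2$ without invoking an $L^p$ bound on $n$ with $p>1$ (which you do not yet have) or a Moser--Trudinger bound on $e^{\delta|\nabla c|^2}$ (which would be nonlinear in $\|\Delta c\|_{L^2}^2$ and hence not absorbable). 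The obstruction is genuine: take $n$ large on a set where $\nabla c\not=0$.

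The paper avoids this by integrating by parts \emph{before} applying Young. Writing the cross term as
\[
\int_\Omega \frac{nS(n)}{(n+1)^2}\nabla n\cdot\nabla c
=\int_\Omega \nabla\Bigl[\int_0^{n}\tfrac{\tau S(\tau)}{(\tau+1)^2}\,d\tau\Bigr]\cdot\nabla c
=-\int_\Omega G(n)\,\Delta c,
\qquad G(n):=\int_0^{n}\tfrac{\tau S(\tau)}{(\tau+1)^2}\,d\tau,
\]
one then applies Young to obtain $\varepsilon\int_\Omega|\Delta c|^2+C_\varepsilon\int_\Omega G(n)^2$. Since $|\tfrac{\tau S(\tau)}{(\tau+1)^2}|\le C_S(1+\tau)^{-1-\alpha}$, one has $|G(n)|\le C(1+n)^{(-\alpha)_+}$ (with a logarithm at $\alpha=0$), so for $\alpha\ge-\tfrac12$ the square satisfies $G(n)^2\le C(1+n)$ and hence $\int_\Omega G(n)^2\le C$ by the $L^1$ bound alone. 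This is where $\alpha\ge-\tfrac12$ actually enters: it keeps the \emph{antiderivative} $G(n)$ in $L^2$ uniformly, rather than (as in your version) keeping the \emph{coefficient} $(1+n)^{-2\alpha}$ below $n+1$. With this modification your plan closes exactly as you describe.
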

\begin{proof} Integrating by parts in the first equation
from \dref{1.dddduiikkldffdffg1}, we derive from  the Young inequality and Lemma \ref{fvfgsdfggfflemma45}  that there is $C_1>0$ such that
\begin{equation}\label{wwwddsssdffsssf44ss55dddlll5eqx45xx12112ccgghh}
\begin{array}{rl}
& \disp-\frac{d}{dt}\disp\int_\Omega
\ln(n  + 1) + \disp\int_\Omega\frac{| \nabla n  |^2}{(n  + 1)^2}\\
\leq&\disp \int_\Omega \frac{n }{(n  + 1)^2}S(n )\nabla n \cdot\nabla c \\
=&\disp \int_\Omega \nabla \int_0^{n }\frac{\tau}{(\tau + 1)^2}S(\tau)d\tau \cdot \nabla c \\
=&\disp -\int_\Omega \int_0^{n }\frac{\tau}{(\tau + 1)^2}S(\tau)d\tau \Delta c \\
\leq&\disp \int_\Omega C_S(1+n )^{-1-\alpha}|\Delta c |\\
\leq&\disp\frac{\pi}{64\varrho_1\varrho_2\|n_0\|_{L^1(\Omega)}}\int_\Omega|\Delta c |^2+\frac{16\varrho_1\varrho_2\|n_0\|_{L^1(\Omega)}}{\pi}\int_\Omega C_S(1+n )^{-2-2\alpha}\\
\leq&\disp \frac{\pi}{64\varrho_1\varrho_2\|n_0\|_{L^1(\Omega)}}\int_\Omega|\Delta c |^2+\frac{16\varrho_1\varrho_2\|n_0\|_{L^1(\Omega)}}{\pi}\int_\Omega C_S(1+n )\\
\leq&\disp \frac{\pi}{64\varrho_1\varrho_2\|n_0\|_{L^1(\Omega)}}\int_\Omega|\Delta c |^2+C_1~~~\mbox{for all}~~~ t\in(0,T_{max})\\
\end{array}
\end{equation}
with some $C_1>0,$ where $\varrho_1$ and $\varrho_2$ are given by Lemma \ref{ssdddlemmaghjdfgggffggssddgghhmk4563025xxhjklojjkkk} and Lemma \ref{33444lemmaghjssddgghhmk4563025xxhjklojjkkk}, respectively.
Here we have used the fact that $-2-2\alpha\leq1$ by using $\alpha\geq-\frac{1}{2}$.
Taking  an evident linear combination of the inequalities provided by \dref{czfvgb2.5ghhjuyuccvviihjj} as well as \dref{333333czfvgb2.5ghhjusssyuccvviihjj} and  \dref{wwwddsssdffsssf44ss55dddlll5eqx45xx12112ccgghh}, one can
obtain
\begin{equation}
\begin{array}{rl}
& \disp-\frac{16\varrho_1\varrho_2\|n_0\|_{L^1(\Omega)}}{\pi}\frac{d}{dt}\disp\int_\Omega
\ln(n  + 1) + \frac{16\varrho_1\varrho_2\|n_0\|_{L^1(\Omega)}}{\pi}\disp\int_\Omega\frac{| \nabla n  |^2}{(n  + 1)^2}\\
&\disp+\frac{1}{{2}}\disp\frac{d}{dt}\|\nabla{c  }\|^{{{2}}}_{L^{{2}}(\Omega)}+\frac{1}{4}
\int_{\Omega} |\Delta c  |^2+ \int_{\Omega} | \nabla c  |^2+4\varrho_1\varrho_2\int_{\Omega} {n }\ln{(1+n )}\\
&\disp+2\varrho_1\frac{d}{dt}\int_{\Omega}  | {u }|^2+\varrho_1\int_{\Omega} |\nabla {u }|^2\\
\leq&\disp{ 4\varrho_1\varrho_2\int_{\Omega} {n }\ln{(1+n )}+4\varrho_1\varrho_2\int_{\Omega} {n }\ln{n }+C_2}\\
=&\disp{ 8\varrho_1\varrho_2\int_{\Omega} {n }\ln{(1+n )}+C_2
~~\mbox{for all}~ ~t\in(0, T_{max})}\\
\end{array}
\label{hhxxcdfvvjjczddfgghjj2ffghhkl.5ddsdfrfggggdrr}
\end{equation}
with $$C_2=2\varrho_1\varrho_2+\varrho_1+C_1\frac{16\varrho_1\varrho_2\|n_0\|_{L^1(\Omega)}}{\pi}.$$
In order to appropriately estimate the integral on the right-hand side herein, we rely
on Lemma \ref{lsssddsssddemma3.5} and Lemma \ref{fvfgsdfggfflemma45}, whence it is possible to
find $C_3 > 0$ such that
\begin{equation}
\begin{array}{rl}\label{ddfff44ss55sssdssssdd5eqx45xx12112ccgghh}
\disp{\disp\int_\Omega
n  \disp\ln(n  +1)}
\leq&\disp{\disp\frac{1}{\pi}\|n_{0}\|_{L^1(\Omega)}8\varrho_1\varrho_2
\disp\int_\Omega\frac{| \nabla  n  |^2}{( n  + 1)^2}+8\varrho_1\varrho_2 C_3},
\end{array}
\end{equation}
which immediately implies that
\begin{equation}
\begin{array}{rl}\label{ddfff44ss5sss5sssdssssdd5eqx45xx12112ccgghh}
\disp{\disp8\varrho_1\varrho_2\int_\Omega
n  \disp\ln(n  +1)}
\leq&\disp{\disp\frac{ 8\varrho_1\varrho_2}{\pi}\|n_{0}\|_{L^1(\Omega)}
\disp\int_\Omega\frac{| \nabla  n  |^2}{( n  + 1)^2}+C_3}.
\end{array}
\end{equation}
This combined with \dref{hhxxcdfvvjjczddfgghjj2ffghhkl.5ddsdfrfggggdrr} and Lemma \ref{fvfgsdfggfflemma45} yields to
\begin{equation}
\begin{array}{rl}
& \disp\frac{16\varrho_1\varrho_2\|n_0\|_{L^1(\Omega)}}{\pi}\frac{d}{dt}\disp \bigl[\int_\Omega
n  -\int_\Omega
\ln(n  + 1)\bigr] + \frac{8\varrho_1\varrho_2\|n_0\|_{L^1(\Omega)}}{\pi}\disp\int_\Omega\frac{| \nabla n  |^2}{(n  + 1)^2}\\
&\disp+\frac{1}{{2}}\disp\frac{d}{dt}\|\nabla{c  }\|^{{{2}}}_{L^{{2}}(\Omega)}+\frac{1}{4}
\int_{\Omega} |\Delta c  |^2+ \int_{\Omega} | \nabla c  |^2+4\varrho_1\varrho_2\int_{\Omega} {n }\ln{(1+n )}\\
&\disp+2\varrho_1\frac{d}{dt}\int_{\Omega}  | {u }|^2+\varrho_1\int_{\Omega} |\nabla {u }|^2+\int_\Omega
n \\
\leq&\disp{ C_4
~~\mbox{for all}~ ~t\in(0, T_{max})}\\
\end{array}
\label{hhxxcdfvvjjczddfgghjj2ffghhkl.5ddsdfsssrfggggdrr}
\end{equation}
with some positive constant $C_4.$
Since the Poincar\'{e} inequality and Lemma \ref{fvfgsdfggfflemma45} provide $C_{5}>0$
such that
 \begin{equation}
 \begin{array}{rl}
0\leq&\disp{\int_\Omega
n  -\int_\Omega
\ln(n  + 1)+\int_{\Omega}  {u ^2}+\int_{\Omega} | \nabla c  |^2}\\
 \leq&\disp{\frac{1}{2C_5}(\int_{\Omega} |\nabla {u }|^2+\int_\Omega
n +\int_{\Omega} | \nabla c  |^2)~~~\mbox{for all}~ ~t\in(0, T_{max}),}\\
\end{array}\label{vgbccvssbbfssdfeerfgghhjussdkkkulsssoollgghhhyhh}
\end{equation}
and thus, we infer from Lemma \ref{fvfgsdfggfflemma45} and \dref{hhxxcdfvvjjczddfgghjj2ffghhkl.5ddsdfsssrfggggdrr} 
that there exist $C_{6}> 0$ and $C_{7}> 0$ such that 
\begin{equation}
 \begin{array}{rl}
 & \disp\frac{d}{dt}\disp \bigl\{\frac{16\varrho_1\varrho_2\|n_0\|_{L^1(\Omega)}}{\pi}[\int_\Omega
n  -\int_\Omega
\ln(n  + 1)]+2\varrho_1| {u }|^2+\frac{1}{{2}}\int_{\Omega} | \nabla c  |^2\bigr\}\\
 & \disp+\disp C_{6}\bigl\{\frac{16\varrho_1\varrho_2\|n_0\|_{L^1(\Omega)}}{\pi}[\int_\Omega
n  -\int_\Omega
\ln(n  + 1)]+2\varrho_1| {u }|^2+\frac{1}{{2}}\int_{\Omega} | \nabla c  |^2\bigr\}\\
 & \disp+ \frac{8\varrho_1\varrho_2\|n_0\|_{L^1(\Omega)}}{\pi}\disp\int_\Omega\frac{| \nabla n  |^2}{(n  + 1)^2}\\
&\disp+\frac{1}{4}
\int_{\Omega} |\Delta c  |^2+\frac{1}{2} \int_{\Omega} | \nabla c  |^2+4\varrho_1\varrho_2\int_{\Omega} {n }\ln{(1+n )}\\
&\disp+\frac{\varrho_1}{2}\int_{\Omega} |\nabla {u }|^2+\frac{1}{2}\int_\Omega
n \\
\leq&\disp{ C_7
~~\mbox{for all}~ ~t\in(0, T_{max}).}\\
\end{array}\label{vgbccvssbbffeerfgghhjussdkkkulsssoollgghhhyhh}
\end{equation}
As a consequence of Lemma \ref{lemma630} we see that \dref{vgbccvssbbffeerfgghhjussdkkkulsssoollgghhhyhh} implies the inequalities
\dref{ddfff44ss55ddd5eqx45xx12112ccgghh}--\dref{ddfff44ss555eqx45xx12112ccgghh}.

Finally, thanks to \dref{ddfgczhhhh2.5sddddghju48cfg924ghyuji}, by means of the Gagliardo-Nirenberg inequality we infer the existence of $C_8> 0,C_9>0$ and  $C_{10}> 0$ fulfilling
%
\begin{equation}
\begin{array}{rl}
\disp \disp\int_{(t-\tau)_+}^t\|\nabla c  \|_{L^{4}(\Omega)}^4\leq&\disp{C_{8}\disp\int_{(t-\tau)_+}^t\bigl[\|\Delta c  \|_{L^{2}(\Omega)}^2\|c  \|_{L^{\infty}(\Omega)}^2+\|c  \|_{L^{\infty}(\Omega)}^4\bigr]}\\
\leq&\disp{C_{9}\disp\int_{(t-\tau)_+}^t[\|\Delta c  \|_{L^{2}(\Omega)}+1]}\\
\leq&\disp{C_{10}
~~\mbox{for all}~~ t\in(0,T_{max}),}\\
\end{array}
\label{sssshhxxcsssdfvvjjcddfffddffzddfdddfff2.5}
\end{equation}
and thereby proves \dref{ssddfff44ss555eqx45xx12112ccgghh}.
\end{proof}

Now thanks to the bound on $\nabla u$ and $m$
gained  Lemma \ref{lsssemma3.5} and Lemma \ref{fvfgsdfggfflemma45}, respectively, 
we can successively improve our knowledge on the regularity
properties of $\nabla c$ by means of the statement which will serve as the inductive step in a recursion.


 \begin{lemma}\label{9999lemma4563ddfff025xxhjkloghyui}
 Let $\kappa\in \mathbb{R}$ as well as $|S(n)|\leq C_S(1+n)^{-\alpha}$ with $\alpha\geq-\frac{1}{2}$.
Let $p\geq 2$ and $L > $0, and again abbreviate $\tau:=\min\{1,\frac{1}{2}T_{max}\}$. Then there exists
$C = C(p, L) > 0$ with the property that if
\begin{equation}
\int_t^{t+\tau}\int_{\Omega} {|\nabla c | ^{2{p}}} \leq L~~\mbox{for all}~~ t\in(0, T_{max}-\tau),
\label{cz2.5ghju48cjkklfg924ghyuji}
\end{equation}
then
\begin{equation}
\int_{\Omega} {|\nabla c | ^{2{p}}} \leq C~~\mbox{for all}~~ t\in(0, T_{max})
\label{cz2.5ghju48cjkddffffklfg924ghyuji}
\end{equation}
and
\begin{equation}
\int_t^{t+\tau}\int_{\Omega}\bigl[|\nabla c | ^{2{p}+2}+ |\nabla c |^{2{p}-2}|D^2c |^2+\left|\nabla |\nabla c |^{{p}}\right|^2\bigr]\leq C~~\mbox{for all}~~ t\in(0, T_{max}-\tau).
\label{223455cz2.5ghsssjddffffffgggu48cfg924ghyuji}
\end{equation}
\end{lemma}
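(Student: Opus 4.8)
The plan is to start from the differential inequality of Lemma~\ref{789lemma4563025xxhjkloghyui}, to absorb the fluid contribution on its right-hand side into the parabolic dissipation at the cost of a coefficient that is integrable over time windows of length $\tau$, and finally to run a Gronwall-type argument fed by the hypothesis \dref{cz2.5ghju48cjkklfg924ghyuji}. Abbreviating $y(t):=\|\nabla c(\cdot,t)\|_{L^{2p}(\Omega)}^{2p}$, Lemma~\ref{789lemma4563025xxhjkloghyui} provides
$$\frac{1}{2p}y'(t)+\frac{p-1}{2p^2}\int_\Omega\left|\nabla|\nabla c|^p\right|^2+\frac12\int_\Omega|\nabla c|^{2p-2}|D^2c|^2+\frac12\int_\Omega|\nabla c|^{2p}\leq\int_\Omega(u\cdot\nabla c)\nabla\cdot(|\nabla c|^{2p-2}\nabla c)+C.$$
Integrating by parts on the right as in \dref{ssddfffcz2.5ghju48156}, splitting into the two resulting integrals, and using $|\Delta c|\leq\sqrt2\,|D^2c|$, $|u\cdot\nabla c|\leq|u|\,|\nabla c|$ together with Young's inequality in the spirit of \dref{cz2.5ghju48hjuikl451}--\dref{cz2.5ghjddfghhu4ghjuk81}, one may absorb a suitably small multiple of $\int_\Omega|\nabla c|^{2p-2}|D^2c|^2$ and of $\int_\Omega|\nabla|\nabla c|^p|^2$ into the left-hand side, the remaining task being to control $\int_\Omega|u|^2|\nabla c|^{2p}$; the additive constants here are handled via $\|u(\cdot,t)\|_{L^2(\Omega)}\leq C$, which follows from \dref{ddfff44ss55ddd5eqx45xx12112ccgghh}.

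This is where the case $\kappa\in\mathbb{R}$ genuinely differs from the Stokes case of Lemma~\ref{lemma4563025xxhjkloghyui}: there, Lemma~\ref{lemma3.4} supplied $L^q$ bounds for $u$ of arbitrary order, so $\int_\Omega|u|^2|\nabla c|^{2p}$ could be split off into a $\int_\Omega|\nabla c|^{2p+2}$ term and a bounded $\int_\Omega|u|^{2p+2}$ term; now only $u\in L^\infty((0,T_{max});L^2(\Omega))$ and $\nabla u\in L^2$ in space-time are at hand, from \dref{ddfff44ss55ddd5eqx45xx12112ccgghh} and \dref{ddfff44ss555eqx45xx12112ccgghh}. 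Instead I would use Hölder's inequality to write $\int_\Omega|u|^2|\nabla c|^{2p}\leq\|u\|_{L^4(\Omega)}^2\,\||\nabla c|^p\|_{L^4(\Omega)}^2$, and then the two-dimensional interpolation estimates $\|u\|_{L^4(\Omega)}^2\leq C\|\nabla u\|_{L^2(\Omega)}\|u\|_{L^2(\Omega)}+C\|u\|_{L^2(\Omega)}^2\leq C(\|\nabla u(\cdot,t)\|_{L^2(\Omega)}+1)$ and $\||\nabla c|^p\|_{L^4(\Omega)}^2\leq C\|\nabla|\nabla c|^p\|_{L^2(\Omega)}\,y(t)^{1/2}+C\,y(t)$, so that one further application of Young's inequality bounds $\int_\Omega|u|^2|\nabla c|^{2p}$ by a small multiple of $\int_\Omega|\nabla|\nabla c|^p|^2$ plus $C(\|\nabla u(\cdot,t)\|_{L^2(\Omega)}^2+1)\,y(t)$. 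Replacing, in addition, a fraction of $\int_\Omega|\nabla c|^{2p-2}|D^2c|^2$ by $\lambda\int_\Omega|\nabla c|^{2p+2}-C$ with some $\lambda>0$ by means of \dref{ddddhddddjui9sss09klllllopji115} and \dref{ddfgczhhhh2.5sddddghju48cfg924ghyuji} (just as in \dref{cz2.5ghsssjddfffu4ghjuk81}), these manipulations lead to
$$y'(t)+c_1\int_\Omega|\nabla c|^{2p+2}+c_2\int_\Omega|\nabla c|^{2p-2}|D^2c|^2+c_3\int_\Omega\left|\nabla|\nabla c|^p\right|^2+c_4\,y(t)\leq C\bigl(\|\nabla u(\cdot,t)\|_{L^2(\Omega)}^2+1\bigr)\,y(t)+C$$
for all $t\in(0,T_{max})$, with positive constants $c_1,\dots,c_4$ depending only on $p$.

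To finish, I would note that \dref{ddfff44ss555eqx45xx12112ccgghh} together with $\tau\leq1$ gives $\int_t^{t+\tau}\bigl(\|\nabla u(\cdot,s)\|_{L^2(\Omega)}^2+1\bigr)\,ds\leq A$ for all $t\in(0,T_{max}-\tau)$, while \dref{cz2.5ghju48cjkklfg924ghyuji} supplies $\int_t^{t+\tau}y(s)\,ds\leq L$, and $y(0)=\|\nabla c_0\|_{L^{2p}(\Omega)}^{2p}<\infty$ by \dref{ccvvx1.731426677gg}. Dropping the nonnegative dissipation integrals, the above reduces to $y'\leq a(t)y+C$ with $a\geq0$ and $\int_t^{t+\tau}a\leq A$; a uniform Gronwall argument — integrating from $s$ to $t$ for $s\in(t-\tau,t)$ and averaging over $s$ when $t\geq\tau$, and estimating $y$ directly in terms of $y(0)$ on $(0,\tau]$ — then yields $y(t)\leq C(p,L)$, which is \dref{cz2.5ghju48cjkddffffklfg924ghyuji}. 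Integrating the full differential inequality over $(t,t+\tau)$ and inserting this uniform bound for $y$, together with the above integral estimates, finally controls $\int_t^{t+\tau}\int_\Omega\bigl[|\nabla c|^{2p+2}+|\nabla c|^{2p-2}|D^2c|^2+|\nabla|\nabla c|^p|^2\bigr]$, which is \dref{223455cz2.5ghsssjddffffffgggu48cfg924ghyuji}. I expect the interpolation step for $\int_\Omega|u|^2|\nabla c|^{2p}$ to be the main obstacle: it is what permits the advective term to be absorbed despite the weak a priori control of $u$, at the price of a coefficient $a(t)\sim\|\nabla u(\cdot,t)\|_{L^2(\Omega)}^2+1$ — precisely a quantity whose integral over windows of length $\tau$ is bounded by virtue of the energy estimate \dref{ddfff44ss555eqx45xx12112ccgghh}.
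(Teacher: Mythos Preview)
Your argument is correct and follows the same overall architecture as the paper: start from Lemma~\ref{789lemma4563025xxhjkloghyui}, convert the fluid term into a contribution of the form $C(\|\nabla u\|_{L^2}^2+1)\,y(t)$ plus absorbable pieces, extract $\int_\Omega|\nabla c|^{2p+2}$ from the $|D^2c|^2$-dissipation via \dref{ddddhddddjui9sss09klllllopji115}, and close by a windowed Gronwall argument using \dref{ddfff44ss555eqx45xx12112ccgghh} and the hypothesis \dref{cz2.5ghju48cjkklfg924ghyuji}.

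The one genuine difference lies in how the fluid term is estimated. You expand $\nabla\cdot(|\nabla c|^{2p-2}\nabla c)$ as in \dref{ssddfffcz2.5ghju48156}, which leaves you with $\int_\Omega|u|^2|\nabla c|^{2p}$ and then requires a Gagliardo--Nirenberg interpolation on $u$ (between $L^2$ and $H^1$) in addition to the one on $|\nabla c|^p$. The paper instead integrates by parts once more, writing
\[
\int_\Omega(u\cdot\nabla c)\,\nabla\cdot(|\nabla c|^{2p-2}\nabla c)
= -\int_\Omega|\nabla c|^{2p-2}\nabla c\cdot(\nabla u\cdot\nabla c),
\]
the companion term $\int_\Omega|\nabla c|^{2p-2}\nabla c\cdot(D^2c\,u)=\tfrac{1}{2p}\int_\Omega u\cdot\nabla|\nabla c|^{2p}$ vanishing by $\nabla\cdot u=0$. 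This puts the derivative on $u$ directly, so a single H\"older estimate $\bigl(\int_\Omega|\nabla c|^{4p}\bigr)^{1/2}\|\nabla u\|_{L^2}$ followed by Gagliardo--Nirenberg on $|\nabla c|^p$ alone suffices. Both routes yield the same differential inequality; the paper's is marginally cleaner (it avoids interpolating $u$), while yours has the virtue of being a transparent modification of the Stokes-case proof in Lemma~\ref{lemma4563025xxhjkloghyui}.
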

\begin{proof}
 Firstly, applying
the several integrations by parts,  one deduces
that
$$
\begin{array}{rl}
&\disp{\int_\Omega (u \cdot\nabla  c )\nabla\cdot( |\nabla c |^{2{p}-2}\nabla c )}
\\
=&\disp{-\int_\Omega |\nabla c |^{2{p}-2}\nabla c \cdot(\nabla u \cdot\nabla  c )
-\int_\Omega |\nabla c |^{2{p}-2}\nabla c \cdot(D^2c u )}\\
=&\disp{-\int_\Omega |\nabla c |^{2{p}-2}\nabla c \cdot(\nabla u \cdot\nabla  c )}
\end{array}
$$
for all $t\in(0,T_{max})$. Here we have used the fact that
$$-\int_\Omega |\nabla c |^{2{p}-2}\nabla c \cdot(D^2c u )=\frac{1}{2{p}}\int_\Omega(\nabla\cdot u ) |\nabla c |^{2{p}}=0.$$
Here, 
 by utilizing the H\"{o}lder  inequality as well as the Gagliardo-Nirenberg inequality and  the Young   inequality
 we thus infer that there exist $C_1>0$ as well as $C_2>0$ and $C_3>0$ such that
$$
\begin{array}{rl}
&\disp-\int_\Omega |\nabla c |^{2{p}-2}\nabla c \cdot(\nabla u \cdot\nabla  c )
\\
\leq&\disp{\left(\int_\Omega |\nabla c |^{4{p}}\right)^{\frac{1}{2}}\left(\int_\Omega |\nabla u |^{2}\right)^{\frac{1}{2}}}
\\
\leq&\disp{C_1\bigl[\left(\int_{\Omega}\left|\nabla |\nabla c |^{{p}}\right|^2\right)^{\frac{1}{2}}\left(\int_\Omega |\nabla c |^{2{p}}\right)^{\frac{1}{2}}+\left(\int_\Omega |\nabla c |^{2{p}}\right)\bigr]\left(\int_\Omega |\nabla u |^{2}\right)^{\frac{1}{2}}}
\\
\leq&\disp{\frac{({p}-1)}{4{{p}^2}}\int_{\Omega}\left|\nabla |\nabla c |^{{p}}\right|^2+C_2\int_\Omega |\nabla c |^{2{p}}\int_\Omega |\nabla u |^{2}+C_3\int_\Omega |\nabla c |^{2{p}}}
\\
\end{array}
$$
for all $t\in(0,T_{max})$.  
Inserting the above inequality into \dref{789hjui909klopji115} and using \dref{cz2.5ghsssjddfffu4ghjuk81}, we conclude that there exists a positive constant $C_4$ such that
\begin{equation}\label{789hjuilll909klopji115}
\begin{array}{rl}
&\disp{\frac{d}{dt}\|\nabla c \|^{{{2{p}}}}_{L^{{2{p}}}(\Omega)}
+\frac{{p}-1}{2{{p}}}\int_{\Omega}\left|\nabla |\nabla c |^{{p}}\right|^2}\\
&\disp{+\frac{p}{2}\int_\Omega  |\nabla c |^{2{p}-2}|D^2c |^2+\frac{p}{2C_\Omega M_*^{2}}\int_\Omega |\nabla c |^{2{p}+2}+{p}\int_{\Omega} |\nabla c |^{2{p}}}
\\
\leq&\disp{
C_4\int_\Omega|\nabla u |^2\int_{\Omega} |\nabla c |^{2{p}}+C_4~~\mbox{for all}~~ t\in(0,T_{max}),}
\end{array}
\end{equation}
where $M_*>0$ is the same as  Lemma \ref{fvfgsdfggfflemma45}.
Thus, if we write $y(t) :=\|\nabla c (\cdot, t)\|^{{{2{p}}}}_{L^{{2{p}
}}(\Omega)}$
as well as  $\rho(t) =C_4\int_\Omega|\nabla u |^2$ and $h(t)=
C_4$
for $t\in(0,T_{max})$, so that,  \dref{789hjuilll909klopji115}   implies that
\begin{equation}
\begin{array}{rl}
y'(t)+\frac{p}{2C_\Omega M_*^{2}}\disp\int_\Omega |\nabla c |^{2{p}+2}+{p}y(t)
\leq&\disp{ \rho(t)y(t)+h(t)~\mbox{for all}~t\in(0,T_{max}).}\\
\end{array}
\label{ddfghgfhggddhjjjnkkll11cz2.5ghju48}
\end{equation}
Next,  applying  Lemma
\ref{lsssemma3.5} and \dref{cz2.5ghju48cjkklfg924ghyuji} to infer the existence of $C_5> 0$ satisfying
for all $t\in(0,T_{max}-\tau)$
\begin{equation}
\begin{array}{rl}
\disp\int_{t}^{t+\tau}\rho(s)ds
\leq&\disp{ C_{5}}\\
\end{array}
\label{ddfghgffgghsdfdfffffgggggddhjjjhjjnnhhkklld911cz2.5ghju48}
\end{equation}
and
\begin{equation}
\begin{array}{rl}
\disp\int_{t}^{t+\tau}h(s)ds
\leq&\disp{ C_{5}}.
\end{array}
\label{ddfghgffgghsdfdfffffgsssggggddhjjjhjjnnhhkklld911cz2.5ghju48}
\end{equation}
Now given $t\in(0,T_{max})$, in view of \dref{cz2.5ghju48cjkklfg924ghyuji} it is possible to fix $t_0 \in  (t-\tau, t)$ such that $t_0 \geq 0$ and
\begin{equation}
y(t_0) \leq C_6
\label{cz2.5ghju48cjkddffffklfgkkk924ghyuji}
\end{equation}
with some $C_6>0$, wereafter integrating \dref{ddfghgfhggddhjjjnkkll11cz2.5ghju48}  in time,  recalling  the fact that $t-t_0\leq \tau\leq1$
and using \dref{ddfghgffgghsdfdfffffgggggddhjjjhjjnnhhkklld911cz2.5ghju48} and \dref{ddfghgffgghsdfdfffffgsssggggddhjjjhjjnnhhkklld911cz2.5ghju48} yields
\begin{equation}
\begin{array}{rl}
y(t)\leq&\disp{y(t_0)e^{\disp\int_{t_0}^t\rho(s)ds}+\int_{t_0}^te^{\disp\int_{s}^t\rho(\tau)d\tau}h(s)ds}\\
\leq&\disp{C_{6}e^{C_{5}}+\int_{t_0}^te^{C_{5}}C_{5}ds}\\
\leq&\disp{(C_{5}+C_{6})e^{C_{5}}~~\mbox{for all}~~t\in(0,T_{max}),}\\
\end{array}
\label{czfvgb2.5ghhddffggjuygdddhjjjuffghhhddfghhllccvjkkklllhhjkkviihjj}
\end{equation}
and thereby verifies \dref{cz2.5ghju48cjkddffffklfg924ghyuji},
and hence \dref{223455cz2.5ghsssjddffffffgggu48cfg924ghyuji} results from one
further integration of \dref{ddfghgfhggddhjjjnkkll11cz2.5ghju48} if we once more make use of \dref{ddfghgffgghsdfdfffffgggggddhjjjhjjnnhhkklld911cz2.5ghju48} and \dref{ddfghgffgghsdfdfffffgsssggggddhjjjhjjnnhhkklld911cz2.5ghju48}.
This completes the proof of Lemma \ref{9999lemma4563ddfff025xxhjkloghyui}.
\end{proof}

Along with the basic estimate from Lemma \ref{lsssemma3.5}, this immediately implies the following.

\begin{lemma}\label{jkkkklsssemma3.5}
Let $\kappa\in \mathbb{R}$ as well as $|S(n)|\leq C_S(1+n)^{-\alpha}$ with $\alpha\geq-\frac{1}{2}$. Then for all ${p} > 1$, one can find $C > 0$ such that
\begin{equation}
\begin{array}{rl}\label{ssddsssdfff44ss555eqx45xx12112ccgghh}
 \disp\int_\Omega  | \nabla c (\cdot,t) |^{2{p}}
\leq &  C~~~\mbox{for all}~~ t \in (0,T_{max}).\\
\end{array}
\end{equation}
\end{lemma}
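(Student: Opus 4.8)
The plan is to bootstrap from the basic energy estimate of Lemma \ref{lsssemma3.5} by means of a finite induction built on the recursive step supplied by Lemma \ref{9999lemma4563ddfff025xxhjkloghyui}. First I would observe that the space--time bound \dref{ssddfff44ss555eqx45xx12112ccgghh}, i.e.\ $\int_{(t-\tau)_+}^t\int_\Omega|\nabla c|^4\leq C$, is --- up to a harmless shift of the time window of width $\tau$, which at most doubles the constant --- precisely the hypothesis \dref{cz2.5ghju48cjkklfg924ghyuji} of Lemma \ref{9999lemma4563ddfff025xxhjkloghyui} with $p=2$ and some $L=L_2>0$. Applying that lemma with $p=2$ then yields simultaneously $\int_\Omega|\nabla c(\cdot,t)|^4\leq C$ for all $t\in(0,T_{max})$, which is the assertion \dref{ssddsssdfff44ss555eqx45xx12112ccgghh} for $p=2$, together with, via \dref{223455cz2.5ghsssjddffffffgggu48cfg924ghyuji}, the bound $\int_t^{t+\tau}\int_\Omega|\nabla c|^6\leq C$ on $(0,T_{max}-\tau)$, which is again the hypothesis \dref{cz2.5ghju48cjkklfg924ghyuji}, now at the level $p=3$.

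Iterating this mechanism is the heart of the argument: I would show by induction on integers $k\geq 2$ that there is $C_k>0$ with $\int_\Omega|\nabla c(\cdot,t)|^{2k}\leq C_k$ for all $t\in(0,T_{max})$. Assuming the sliding-window estimate $\int_t^{t+\tau}\int_\Omega|\nabla c|^{2k}\leq L_k$ for all $t\in(0,T_{max}-\tau)$, Lemma \ref{9999lemma4563ddfff025xxhjkloghyui} with $p=k$ provides a constant $C=C(k,L_k)>0$ such that $\int_\Omega|\nabla c(\cdot,t)|^{2k}\leq C$ on $(0,T_{max})$ and, in addition, $\int_t^{t+\tau}\int_\Omega|\nabla c|^{2k+2}\leq C$ on $(0,T_{max}-\tau)$; the latter is exactly the hypothesis needed to run the step with $p=k+1$, with $L_{k+1}:=C$. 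Since the case $k=2$ is in hand from the first paragraph, the induction proceeds, and because the chain $2\to 3\to\cdots\to k$ leading to any prescribed exponent has finite length, no accumulation of constants occurs: $C_k$ depends only on $k$ (through $C_S$, $\alpha$, $\Omega$ and the initial data). To pass from integer exponents to an arbitrary $p>1$, I would simply fix an integer $k\geq 2$ with $k\geq p$ and use H\"older's inequality on the bounded domain $\Omega$:
\[
\int_\Omega|\nabla c(\cdot,t)|^{2p}\leq|\Omega|^{1-\frac{p}{k}}\Bigl(\int_\Omega|\nabla c(\cdot,t)|^{2k}\Bigr)^{\frac{p}{k}}\leq|\Omega|^{1-\frac{p}{k}}\,C_k^{\frac{p}{k}}\qquad\mbox{for all } t\in(0,T_{max}),
\]
which is the claimed bound \dref{ssddsssdfff44ss555eqx45xx12112ccgghh}.

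The genuinely hard analysis lies upstream rather than in this lemma: it resides in the energy-type inequality of Lemma \ref{lsssemma3.5} --- which rests on the Moser--Trudinger-based functional inequalities of Lemmas \ref{lsssddsssddemma3.5}--\ref{lsssddddemma3.5} and, decisively, on the restriction $\alpha\geq-\frac{1}{2}$ (it is exactly this that makes $-2-2\alpha\leq 1$ in \dref{wwwddsssdffsssf44ss55dddlll5eqx45xx12112ccgghh}) --- and in the recursive step of Lemma \ref{9999lemma4563ddfff025xxhjkloghyui}, where the Gagliardo--Nirenberg and Young inequalities are used to close the differential inequality \dref{ddfghgfhggddhjjjnkkll11cz2.5ghju48}. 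Granting those, the only point in the present proof that requires a little care is that the constant in Lemma \ref{9999lemma4563ddfff025xxhjkloghyui} depends only on $p$ and $L$, which is what makes the induction legitimate; beyond that, everything here is bookkeeping together with the elementary H\"older interpolation.
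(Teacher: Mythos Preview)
Your proposal is correct and follows precisely the paper's own approach: the paper's proof is the one-liner ``This results from a straightforward induction on the basis of Lemma \ref{9999lemma4563ddfff025xxhjkloghyui}, using \dref{ssddfff44ss555eqx45xx12112ccgghh} as a starting point,'' and you have simply spelled out that induction in detail (including the H\"older step to pass from integer $k$ to arbitrary $p>1$, which the paper leaves implicit).
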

\begin{proof} This results from a straightforward induction on the basis of Lemma \ref{9999lemma4563ddfff025xxhjkloghyui}, using \dref{ssddfff44ss555eqx45xx12112ccgghh} as a
starting point.
\end{proof}

Subtly combining the acquirements from Lemma 3.2, Lemma 3.3 and Lemma 3.6, we
can obtain the uniform boundedness of $\int_\Omega |u (\cdot, t)|^2$ as well as
$\int_0^t\int_\Omega |\nabla u (\cdot, t)|^2$ and $\int_0^t\int_\Omega n^2 $ in any finite
time interval, as formulated below.

\begin{lemma}\label{ggh788999hjllsssemmkkllla3.5}
Let $S$ satisfies that \dref{x1.73142vghf48gg} with $\alpha>-\frac{1}{2}$.  Then for any $\kappa\in\mathbb{R}$ and $p>1,$ 
one can find $C > 0$
such that

\begin{equation}
\begin{array}{rl}\label{ddfff44ss55ddd5kkkkkeqx45xx12112ccgghh}
\disp\int_\Omega n ^p(\cdot,t)
\leq &
C~~~~ \mbox{for all}~ t\in(0,T_{max}).\\
\end{array}
\end{equation}
\end{lemma}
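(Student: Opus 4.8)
The plan is to carry out the usual $L^p$ testing procedure on the first equation of \dref{1.dddduiikkldffdffg1} with the test function $(n+1)^{p-1}$, but --- in contrast to the Stokes case $\kappa=0$ handled earlier, where one shifts the $c$-derivative by parts and then relies on $L^q$-bounds of $\Delta c$ (hence, through parabolic regularity, on $L^q$-bounds of $u$ that are no longer available once $\kappa\neq0$) --- to keep the chemotactic contribution in the form $\int_\Omega n^{p-1}S(n)\nabla n\cdot\nabla c$ and to absorb it using only the uniform-in-time bounds $\|\nabla c(\cdot,t)\|_{L^q(\Omega)}\le C(q)$ supplied by Lemma \ref{jkkkklsssemma3.5} for every finite $q$. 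Observe that, since $\nabla\cdot u=0$, the convective term disappears after testing ($\int_\Omega(n+1)^{p-1}u\cdot\nabla n=\tfrac1p\int_\Omega u\cdot\nabla(n+1)^p=0$), so the velocity field does not enter this step at all; this is what makes the argument work for every $\kappa\in\mathbb{R}$.

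Concretely, testing the first equation by $(n+1)^{p-1}$, discarding the boundary terms via the no-flux condition together with $\partial_\nu c=0$, dropping the nonpositive contribution $-\int_\Omega(n+1)^{p-1}nm$, and then applying Young's inequality to the chemotactic term while using $|S(n)|\le C_S(1+n)^{-\alpha}$ and $n(1+n)^{-\alpha}\le(1+n)^{1-\alpha}$, I would arrive at
\begin{equation*}
\frac{1}{p}\frac{d}{dt}\int_\Omega(n+1)^p+\frac{p-1}{2}\int_\Omega(n+1)^{p-2}|\nabla n|^2\le C\int_\Omega(1+n)^{p+2(-\alpha)_+}|\nabla c|^2
\end{equation*}
for all $t\in(0,T_{max})$ with $C=C(p)>0$. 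To the right-hand side I would apply Hölder's inequality with exponents $r,r'$, estimate $\|\nabla c\|_{L^{2r'}(\Omega)}$ by the constant from Lemma \ref{jkkkklsssemma3.5} (which is legitimate for every, possibly large, $r'$), and then use the two-dimensional Gagliardo--Nirenberg inequality for $w:=(n+1)^{p/2}$ together with the mass bound $\|n(\cdot,t)\|_{L^1(\Omega)}\le\|n_0\|_{L^1(\Omega)}$ from Lemma \ref{fvfgsdfggfflemma45} to bound $\big(\int_\Omega(1+n)^{r(p+2(-\alpha)_+)}\big)^{1/r}$ by $\varepsilon\,\|\nabla w\|_{L^2(\Omega)}^2+C(\varepsilon)$, where $\|\nabla w\|_{L^2(\Omega)}^2=\tfrac{p^2}{4}\int_\Omega(n+1)^{p-2}|\nabla n|^2$. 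After absorbing, and using Gagliardo--Nirenberg once more in the form $\int_\Omega(n+1)^p\le\varepsilon\int_\Omega(n+1)^{p-2}|\nabla n|^2+C(\varepsilon)$, this produces a differential inequality of the type $\frac{d}{dt}\int_\Omega(n+1)^p+\delta\int_\Omega(n+1)^p\le C$, from which \dref{ddfff44ss55ddd5kkkkkeqx45xx12112ccgghh} follows by an ODE comparison (equivalently, by an application of Lemma \ref{lemma630} with the exponent there equal to $1$).

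The step that actually has to be argued with care --- and the only place where the threshold $\alpha=-\tfrac12$ enters --- is the exponent bookkeeping in the Gagliardo--Nirenberg estimate: one must choose $r>1$ so small that $\beta:=r(p+2(-\alpha)_+)<p+1$, and then verify that the power of $\|\nabla w\|_{L^2(\Omega)}$ obtained by interpolating $\|w\|_{L^{2\beta/p}(\Omega)}$ between $\|\nabla w\|_{L^2(\Omega)}$ and $\|w\|_{L^{2/p}(\Omega)}$, raised to the exponent needed to recover $\|1+n\|_{L^\beta(\Omega)}^{p+2(-\alpha)_+}$, remains strictly below $2$, so that Young's inequality permits absorption into the dissipation; a short computation shows that both requirements reduce to $2r(-\alpha)_+<1$, which is solvable in $r>1$ precisely when $(-\alpha)_+<\tfrac12$, that is, when $\alpha>-\tfrac12$. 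A minor point worth recording is that for $p>2$ the ``low'' exponent $\tfrac2p$ above lies below $1$; the Gagliardo--Nirenberg inequality remains valid in this regime on a bounded domain, but if one prefers to avoid it the estimate may instead be obtained by a finite induction on $p$, raising the integrability of $n$ from $L^1$ to $L^2$, then to $L^4$, and so on, using the previously obtained norm in the low slot at each stage.
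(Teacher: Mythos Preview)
Your proposal is correct and follows essentially the same route as the paper: test the $n$-equation, use Young's inequality to split the chemotactic term into a power of $(1+n)$ and a high $L^q$ norm of $\nabla c$, invoke Lemma~\ref{jkkkklsssemma3.5} to bound the latter uniformly, and absorb the former via the two-dimensional Gagliardo--Nirenberg inequality and the mass bound, with the threshold $\alpha>-\tfrac12$ arising from the same exponent condition $2(-\alpha)_+<1$. The only cosmetic difference is that the paper first converts part of the dissipation into $\int_\Omega n^{p+1}$ and then applies Young with exponents $\tfrac{p+1}{p+2(-\alpha)_+}$ and $\tfrac{p+1}{1-2(-\alpha)_+}$, whereas you apply H\"older first and run Gagliardo--Nirenberg directly on the $n$-factor; the bookkeeping is equivalent.
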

\begin{proof}Integrating by parts in the first equation
from \dref{1.dddduiikkldffdffg1}  to see that thanks to the Young inequality,
\begin{equation}\label{ddsssdffsssf44ss512cchh}
\begin{array}{rl}
& \disp\frac{d}{dt}\disp\int_\Omega n ^p+ \disp(p-1)\int_\Omega n ^{p-2}{| \nabla n  |^2}\\
\leq&\disp (p-1)\int_\Omega { C_Sn ^{p-1}(1+n )^{-\alpha}}{}|\nabla n ||\nabla c |\\
\leq&\disp \disp\frac{(p-1)}{2}\int_\Omega n ^{p-2}{| \nabla n  |^2}+\frac{C_S^2(p-1)}{2}\int_\Omega  n ^{p}(1+n )^{-2\alpha}|\nabla c |^2~~~\mbox{for all}~~~ t\in(0,T_{max}).\\
\end{array}
\end{equation}
 In view of the Gagliardo-Nirenberg inequality giving  constants $C_1> 0$ and $C_2> 0$
 such that
 \begin{equation}\label{ddsssdffssssssf44ss55dddlll5eqx45xx12112ccgghh}
\begin{array}{rl}
& \disp\int_\Omega{}n ^{p+1}\\
=&\disp \|n  ^{\frac{p}{2}}\|^{\frac{2(p+1)}{p}}_{L^{\frac{2(p+1)}{p}}(\Omega)}\\
\leq&\disp C_1(\|\nabla n  ^{\frac{p}{2}}\|^2_{L^2(\Omega)}\|n  ^{\frac{p}{2}}\|^\frac{2}{p}_{L^\frac{2}{p}(\Omega)}+\|\nabla n  ^{\frac{p}{2}}\|^{\frac{2(p+1)}{p}}_{L^\frac{2}{p}(\Omega)})\\
\leq&\disp C_2(\|\nabla n ^{\frac{p}{2}}\|^2_{L^2(\Omega)}+1)~~~\mbox{for all}~~~ t\in(0,T_{max}),
\end{array}
\end{equation}
which entails
$$\disp \disp\int_\Omega n ^{p-2}{| \nabla n  |^2}\geq\frac{1}{C_2}(\frac{4}{p^2}\int_\Omega{}{n  ^{p+1}}-1)~~~\mbox{for all}~~~ t\in(0,T_{max}).$$
Inserting the above inequality into \dref{ddsssdffsssf44ss512cchh} as well as using the Young inequality and Lemma \ref{jkkkklsssemma3.5}, there are $C_3>0$ and
$C_4>0$ such that
\begin{equation}\label{ddffggg67788344555ddsssdffsssf44ss55dddlll5eqx45xx12112ccgghh}
\begin{array}{rl}
& \disp\frac{d}{dt}\disp\int_\Omega n  ^p +  \disp\frac{(p-1)}{4}\int_\Omega n ^{p-2}{| \nabla n  |^2}+ \disp\frac{(p-1)}{4}\frac{1}{C_2}\frac{4}{p^2}\int_\Omega{}{n  ^{p+1}}\\
\leq&\disp\disp\frac{1}{2}\frac{(p-1)}{4}\frac{1}{C_2}\frac{4}{p^2}\int_\Omega{}{n  ^{p+1}}+C_3 \int_\Omega |\nabla c |^{\frac{p+1}{1-2(-\alpha)_+}}+C_3\\
\leq&\disp\disp\frac{1}{2}\frac{(p-1)}{4}\frac{1}{C_2}\frac{4}{p^2}\int_\Omega{}{n  ^{p+1}}+C_4~~~\mbox{for all}~~~ t\in(0,T_{max}),\\
\end{array}
\end{equation}
which together with some basic  calculations implies that \dref{ddfff44ss55ddd5kkkkkeqx45xx12112ccgghh} holds.
\end{proof}

\begin{lemma}\label{lsssemma3788999.5}
Let $|S(n)|\leq C_S(1+n)^{-\alpha}$ and $\alpha=\frac{1}{2}$.  Then for  any $\kappa\in\mathbb{R}$ and  finite $T\in (0,T_{max}]$, 
one can find $C > 0$
such that
\begin{equation}
\begin{array}{rl}\label{ddfff44ss5sddff5ddd5eqx45xx12112ccgghh}
\disp\int_\Omega
[(n  + 1)\ln(n  + 1)+  {|\nabla u |^2}]
\leq &
C~~~~ \mbox{for all}~ t\in(0,T)\\
\end{array}
\end{equation}
as well as
\begin{equation}
\begin{array}{rl}\label{ddfff44ss5s234555sdd55eqx45xx12112ccgghh}
 \disp\int_{0}^t\int_\Omega \bigl[(n +1)^2+ \frac{| \nabla n  |^2}{n  + 1}+|\Delta u |^2\bigr]
\leq &  C~~~\mbox{for all}~~ t \in (0,T).\\
\end{array}
\end{equation}
\end{lemma}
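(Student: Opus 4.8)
The plan is to establish the two displays separately, deriving the cell-density estimate by testing the first equation against $\ln(n+1)+1$ and the fluid estimate by testing the fourth equation against $Au$, and then closing each differential inequality with the uniform and sliding-window bounds already at hand. First I would multiply the $n$-equation in \dref{1.dddduiikkldffdffg1} by $\ln(n+1)+1$ and integrate by parts. Since $\nabla\cdot u=0$ the drift term vanishes, the diffusion term produces the dissipation $-\int_\Omega\frac{|\nabla n|^2}{n+1}$, the fertilization term $-\int_\Omega(\ln(n+1)+1)nm\le 0$ may be discarded, and the chemotactic term becomes $\int_\Omega\frac{nS(n)}{n+1}\nabla n\cdot\nabla c$. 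The decisive use of $\alpha=\frac12$ is that $\frac{n|S(n)|}{n+1}\le C_S n(1+n)^{-3/2}\le C_S(1+n)^{-1/2}$, so that writing $(1+n)^{-1/2}|\nabla n|=\frac{|\nabla n|}{(1+n)^{1/2}}$ and applying Young's inequality absorbs half of the dissipation while leaving only $\frac{C_S^2}{2}\int_\Omega|\nabla c|^2$, which is bounded uniformly in $t$ by \dref{ddfff44ss55ddd5eqx45xx12112ccgghh}. Integrating the resulting inequality
\[
\frac{d}{dt}\int_\Omega(n+1)\ln(n+1)+\frac12\int_\Omega\frac{|\nabla n|^2}{n+1}\le C
\]
over $(0,t)$ yields both the $(n+1)\ln(n+1)$ bound of \dref{ddfff44ss5sddff5ddd5eqx45xx12112ccgghh} and the spacetime control of $\frac{|\nabla n|^2}{n+1}$ in \dref{ddfff44ss5s234555sdd55eqx45xx12112ccgghh}, with constants growing linearly in $T$.

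To reach the remaining cell-density term $\int_0^t\int_\Omega(n+1)^2$ I would feed this last bound into the two-dimensional Gagliardo-Nirenberg inequality applied to $\sqrt{n+1}$, namely $\|\sqrt{n+1}\|_{L^4(\Omega)}^4\le C\|\nabla\sqrt{n+1}\|_{L^2(\Omega)}^2\|\sqrt{n+1}\|_{L^2(\Omega)}^2+C\|\sqrt{n+1}\|_{L^2(\Omega)}^4$. Here $\|\sqrt{n+1}\|_{L^2}^2=\int_\Omega(n+1)$ is controlled by the $L^1$-bound of Lemma \ref{fvfgsdfggfflemma45}, while $\|\nabla\sqrt{n+1}\|_{L^2}^2=\frac14\int_\Omega\frac{|\nabla n|^2}{n+1}$, so a further integration in time gives $\int_0^t\int_\Omega(n+1)^2\le C(T)$.

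For the fluid part I would test the projected fourth equation with $Au$, using $\|A^{1/2}u\|_{L^2}^2=\int_\Omega|\nabla u|^2$ and $\|\Delta u\|_{L^2}\le C\|Au\|_{L^2}$, to obtain
\[
\frac12\frac{d}{dt}\int_\Omega|\nabla u|^2+\int_\Omega|Au|^2=\int_\Omega(n+m)\nabla\phi\cdot Au-\kappa\int_\Omega(u\cdot\nabla)u\cdot Au.
\]
The forcing term is bounded by $\|\nabla\phi\|_{L^\infty}\|n+m\|_{L^2}\|Au\|_{L^2}$ and absorbed after invoking the $L^2$-bound on $n$ from Lemma \ref{ggh788999hjllsssemmkkllla3.5} (which applies since $\alpha=\frac12>-\frac12$) and the $L^\infty$-bound on $m$ from Lemma \ref{fvfgsdfggfflemma45}. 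When $\kappa\ne 0$ the convective term is handled through $\|(u\cdot\nabla)u\|_{L^2}\le\|u\|_{L^4}\|\nabla u\|_{L^4}$ and the planar interpolation inequalities $\|u\|_{L^4}^2\le C\|u\|_{L^2}\|\nabla u\|_{L^2}+C\|u\|_{L^2}^2$ and $\|\nabla u\|_{L^4}^2\le C\|Au\|_{L^2}\|\nabla u\|_{L^2}+C\|\nabla u\|_{L^2}^2$; Young's inequality then absorbs another portion of $\int_\Omega|Au|^2$ and, after using $\int_\Omega u^2\le C$ from \dref{ddfff44ss55ddd5eqx45xx12112ccgghh}, leaves a superlinear term, so that altogether
\[
\frac{d}{dt}\int_\Omega|\nabla u|^2+\int_\Omega|Au|^2\le C\Big(\int_\Omega|\nabla u|^2\Big)^2+C.
\]

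The main obstacle is closing this superlinear inequality, and it is precisely here that the restriction to finite $T$ and the $T$-dependence of $C$ become unavoidable. Writing $y(t)=\int_\Omega|\nabla u|^2$ and reading $Cy^2$ as the linear term $(Cy)\cdot y$ with time-integrable coefficient, I would use the sliding-window estimate for $\int\int|\nabla u|^2$ in \dref{ddfff44ss555eqx45xx12112ccgghh}, summed over the $O(T/\tau)$ windows of length $\tau$ covering $(0,t)$, to bound $\int_0^t Cy\,ds\le C(T)$; Gronwall's inequality then gives $y(t)\le(y(0)+Ct)e^{C(T)}\le C(T)$, which is the $|\nabla u|^2$ bound in \dref{ddfff44ss5sddff5ddd5eqx45xx12112ccgghh}. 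A concluding integration, estimating $\int_0^t y^2\le\big(\sup_{[0,t]}y\big)\int_0^t y\le C(T)$, then yields $\int_0^t\int_\Omega|Au|^2\le C(T)$ and hence the $|\Delta u|^2$ spacetime bound of \dref{ddfff44ss5s234555sdd55eqx45xx12112ccgghh}, completing both displays.
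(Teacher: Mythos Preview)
There is a sign error in the statement of the lemma: the paper's own proof (and the surrounding context, in particular Lemma~\ref{lemmasdde45673.4dd} and Theorem~\ref{theossddrem3}) makes clear that the intended hypothesis is the critical case $\alpha=-\tfrac{1}{2}$, not $\alpha=\tfrac{1}{2}$. You have taken $\alpha=\tfrac{1}{2}$ at face value, and under that hypothesis your argument is essentially correct---but also largely unnecessary, since for $\alpha>-\tfrac{1}{2}$ Lemma~\ref{ggh788999hjllsssemmkkllla3.5} already yields a uniform $L^p$ bound on $n$ for every $p>1$, making the $(n+1)\ln(n+1)$ and $\int_0^t\!\int_\Omega(n+1)^2$ estimates trivial.

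For the intended case $\alpha=-\tfrac{1}{2}$, however, your proof has a genuine gap. With $|S(n)|\le C_S(1+n)^{1/2}$ the pointwise bound you use becomes $\frac{n|S(n)|}{n+1}\le C_S(1+n)^{1/2}$ rather than $C_S(1+n)^{-1/2}$, so after Young's inequality the chemotaxis term leaves behind $\tfrac{C_S^2}{2}\int_\Omega(1+n)^2|\nabla c|^2$, not $\tfrac{C_S^2}{2}\int_\Omega|\nabla c|^2$. This is not controlled by the bounds at hand. In the same vein, your appeal to Lemma~\ref{ggh788999hjllsssemmkkllla3.5} for the $\|n\|_{L^2}$ forcing term in the fluid inequality is unavailable, since that lemma requires $\alpha>-\tfrac{1}{2}$ strictly.

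The paper circumvents both issues by a coupled argument. In the $n$-inequality it integrates the chemotaxis term by parts once more to produce $\Delta c$ rather than $\nabla c$, giving the right-hand side $C_S\int_\Omega(1+n)^{3/2}|\Delta c|$; Young then splits this into a small multiple of $\int_\Omega(1+n)^2$ (absorbable by the dissipation via Gagliardo--Nirenberg) plus $C\int_\Omega|\Delta c|^4$. Parabolic $L^4$-regularity for the $c$-equation bounds $\int_0^t\!\int_\Omega|\Delta c|^4$ by $\int_0^t\|u\|_{L^8}^4+C$, and interpolation plus Young turn this into $\delta\int_0^t\|\Delta u\|_{L^2}^2+C(\delta)$. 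The fluid inequality, in turn, controls $\int_0^t\|\Delta u\|_{L^2}^2$ by $\int_0^t\!\int_\Omega n^2+C$. Adding an appropriate multiple of one inequality to the other and choosing $\delta$ small closes the loop; the decoupled strategy you propose cannot work here because neither the $L^2$ bound on $n$ nor a sufficiently strong bound on the chemotaxis term is available in isolation.
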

\begin{proof} Integrating by parts in the first equation
from \dref{1.dddduiikkldffdffg1}, we derive from  the Young inequality as well as  Lemma \ref{fvfgsdfggfflemma45} and $\alpha=-\frac{1}{2}$  that 
\begin{equation}\label{wwwddsssdff099900dddlll5eqx45xx12112ccgghh}
\begin{array}{rl}
& \disp\frac{d}{dt}\disp\int_\Omega(n  + 1)
\ln(n  + 1) + \disp\int_\Omega\frac{| \nabla n  |^2}{n  + 1}\\
\leq&\disp \int_\Omega \frac{n }{n  + 1}S(n )\nabla n \cdot\nabla c \\
=&\disp \int_\Omega \nabla \int_0^{n }\frac{\tau}{\tau + 1}S(\tau)d\tau \cdot \nabla c \\
=&\disp -\int_\Omega \int_0^{n }\frac{\tau}{\tau + 1}S(\tau)d\tau \Delta c \\
\leq&\disp \int_\Omega C_S(1+n )^{\frac{3}{2}}|\Delta c |~~~\mbox{for all}~~~ t\in(0,T_{max}).\\
\end{array}
\end{equation}
We apply the Helmholtz projection operator $\mathcal{P}$ to the fourth  equation of \dref{1.dddduiikkldffdffg1} and multiply the
resulting identity, $u_t+Au=-\mathcal{P}[\kappa(u\cdot\nabla)u]+\mathcal{P}[(n+m)\nabla\phi]$, $t\in(0, T_{max})$, by $Au$.
According to the Young inequality, the boundedness of $\nabla\phi$ and the orthogonal projection property of $\mathcal{P}$, one can find that there exists some positive constant $C_1$ such that
\begin{equation}
\begin{array}{rl}
\disp\frac{1}{2}\frac{d}{dt}\int_{\Omega}|A^{\frac{1}{2}}u|^2+\int_{\Omega}|Au|^{2}
=&\disp-\int_{\Omega}\mathcal{P}[\kappa(u\cdot\nabla)u]\cdot Au+\int_{\Omega}\mathcal{P}[(v_1+v_2)\nabla\phi]\cdot Au\\
\leq&\disp\frac{1}{4}\int_{\Omega}|Au|^2+\int_{\Omega}|\kappa(u\cdot\nabla)u|^2+\frac{1}{4}\int_{\Omega}|Au|^2+\int_{\Omega}|(n+m)\nabla\phi|^{2}\\
\leq&\disp\frac{1}{2}\int_{\Omega}|Au|^2+\int_{\Omega}|\kappa(u\cdot\nabla)u|^2+\|\nabla\phi\|_{L^\infty(\Omega)}^2\int_{\Omega}(n+m)^2\\
\leq&\disp\frac{1}{2}\int_{\Omega}|Au|^2+\int_{\Omega}|\kappa(u\cdot\nabla)u|^2+C_1\int_{\Omega}(n^2+1)~~~\mbox{for all}~~t\in(0,T_{max}),
\end{array}
\label{dkdmks}
\end{equation}
where in the last inequality we have used \dref{ddfgczhhhh2.5ghju48cfg924ghyuji} and the fact that
$$(a+b)^2\leq 2(a^2+b^2) ~~~\mbox{for any}~~a,b\geq0.$$
The Young inequality in combination with the Gagliardo-Nirenberg inequality and the outcome of Lemma \ref{lsssemma3788999.5} enables us to
find $C_2>0$ as well as $C_3>0$ and $C_4>0$ such that
\begin{equation}
\begin{array}{rl}
\disp \int_{\Omega}|\kappa(u\cdot\nabla)u|^2
\leq &\disp C_2\int_{\Omega}|(u\cdot\nabla)u|^2\\
\leq &\disp C_2\|u\|^2_{L^{\infty}(\Omega)}\int_{\Omega}|\nabla u|^{2}\\
\leq &\disp C_3\|Au\|_{L^2(\Omega)}\|u\|_{L^2(\Omega)}\|\nabla u\|^2_{L^2(\Omega)}\\
\leq &\disp C_4\|Au\|_{L^2(\Omega)}\|\nabla u\|^2_{L^2(\Omega)}\\
\leq &\disp \frac{1}{4}\int_{\Omega}|Au|^2+C_4^2(\int_{\Omega}|\nabla u|^2)^2~~~\mbox{for all}~~t\in(0,T_{max}).
\end{array}
\label{psjdnch}
\end{equation}
Summing \dref{dkdmks} and \dref{psjdnch}, we show that
\begin{equation}
\frac{1}{2}\frac{d}{dt}\int_{\Omega}|A^{\frac{1}{2}}u|^2+\frac{1}{4}\int_{\Omega}|Au|^2
\leq C_4^2(\int_{\Omega}|\nabla u|^2)^2+C_1\int_{\Omega}(n^2+1)~~~\mbox{for all}~~t\in(0,T_{max}).
\label{cjjskx}
\end{equation}
Now, we let $h_1(t):=2C_4^2\int_{\Omega}|\nabla u|^2$, $h_2(t):=2C_1\int_{\Omega}(n^2+1)-\frac{1}{2}\int_{\Omega}|\Delta u|^2$ and $y(t):=\int_{\Omega}|\nabla u|^2$, $t\in(0,T_{max})$. From the definition of $y(t)$, we can further write \dref{cjjskx} as
\begin{equation}
y'(t)\leq h_1(t)\cdot y(t)+h_2(t)~~~\mbox{for all}~~t\in(0,T_{max}).
\label{ivfsgh}
\end{equation}
With the aid of Lemma \ref{lsssemma3788999.5}, for  any   finite $T\in (0,T_{max}]$,
we find 
 $C_5>0$ such that
\begin{equation}
\int_{0}^{t}\int_{\Omega}|\nabla u|^2\leq C_5~~~\mbox{for all}~~t\in(0,T).
\label{whxndj}
\end{equation}
Integrating \dref{ivfsgh} from $0$ to $t$ and using the definition of $h_1$ and $h_2$, we see that
\begin{equation}\label{ddsssdffssssssddsssf44ss55dddlll5eqx45xx12112ccgghh}
\begin{array}{rl}
y(t)\leq& y(0) e^{\int_{0}^{t}h_1(s)ds}+e^{\disp\int_{0}^{t}h_1(\tau)d\tau}\int_{0}^{t}h_2(s)e^{-\int_{0}^{s}h_1(\tau)d\tau}ds\\
=&y(0) e^{\int_{0}^{t}h_1(s)ds}+\disp\int_{0}^{t}\bigl[2C_1\int_{\Omega}(n^2+1)-\frac{1}{2}\int_{\Omega}|\Delta u|^2\bigr]\cdot e^{\int_{s}^{t}2C_4^2\int_{\Omega}|\nabla u|^2d\tau}ds\\
\leq&y(0)e^{\int_{0}^{t}h_1(s)ds}+\disp\int_{0}^{t}\bigl[2C_1\int_{\Omega}(n^2+1)\bigr]\cdot e^{2C_4^2C_5}ds-
\frac{1}{2}\int_{0}^{t}\int_{\Omega}|\Delta u|^2.
\end{array}
\end{equation}
Here we have used the fact that
$$0\leq\int_{s}^{t}2C_4^2\int_{\Omega}|\nabla u|^2d\tau\leq C_5.$$
This combined with the definition of $h_1$  and \dref{ddsssdffssssssddsssf44ss55dddlll5eqx45xx12112ccgghh} yields to  
$$\int_{\Omega}|\nabla u(\cdot,t)|^2\leq C_6+2C_1\int_{\Omega}(n^2+1)e^{2C_3^2C_5}-\frac{1}{2}\int_{0}^{t}\int_{\Omega}|\Delta u|^2$$
with some $C_6>0.$
In view of the Gagliardo-Nirenberg inequality giving  constants $C_7> 0$ and $C_8> 0$
 such that
 \begin{equation}\label{ddsssdffssssssf44ss55dddlll5eqx45xx12112ccgghh}
\begin{array}{rl}
& \disp\int_\Omega{}{(n  + 1)^2}\\
=&\disp \|(n  + 1)^{\frac{1}{2}}\|^4_{L^4(\Omega)}\\
\leq&\disp C_7(\|\nabla(n  + 1)^{\frac{1}{2}}\|^2_{L^2(\Omega)}\|(n  + 1)^{\frac{1}{2}}\|^2_{L^2(\Omega)}+\|\nabla(n  + 1)^{\frac{1}{2}}\|^4_{L^2(\Omega)})\\
\leq&\disp C_8(\|\nabla(n  + 1)^{\frac{1}{2}}\|^2_{L^2(\Omega)}+1)~~~\mbox{for all}~~~ t\in(0,T_{max}),
\end{array}
\end{equation}
which yields to
$$\disp\int_\Omega\frac{| \nabla n  |^2}{(n  + 1)}\geq\frac{4}{C_8}(\int_\Omega{}{(n  + 1)^2}-1)~~~\mbox{for all}~~~ t\in(0,T_{max}).$$
Inserting the above inequality into \dref{ddsssdffsssf44ss512cchh} and using the Young inequality, there is $C_9>0$ such that
\begin{equation}\label{344555ddsssdfer56677fsssf44ss55dddlll5eqx45xx12112ccgghh}
\begin{array}{rl}
& \disp\frac{d}{dt}\disp\int_\Omega(n  + 1)
\ln(n  + 1) + \disp\frac{1}{2}\int_\Omega\frac{| \nabla n  |^2}{(n  + 1)}+\frac{4}{2C_8}\int_\Omega{}{(n  + 1)^2}\\
\leq&\disp\disp\frac{1}{4}\int_\Omega\frac{| \nabla n  |^2}{(n  + 1)}+C_9 \int_\Omega |\Delta c |^{4}+C_9~~~\mbox{for all}~~~
t\in(0,T_{max}).\\
\end{array}
\end{equation}
For  any   finite $T\in (0,T_{max}],$ integrating \dref{344555ddsssdfer56677fsssf44ss55dddlll5eqx45xx12112ccgghh} with respect to time,
 we have
\begin{equation}\label{ssddd344555ddsgggjjssdffsssf44ss55dddlll5eqx45xx12112ccgghh}
\begin{array}{rl}
& \disp\disp\int_\Omega(n  + 1)
\ln(n  + 1) +\frac{2}{C_8}\int_0^t\int_\Omega{}{(n  + 1)^2}+\frac{1}{4}\int_0^t\int_\Omega\frac{| \nabla n  |^2}{(n  + 1)}\\
\leq&\disp\int_\Omega(n_0 + 1)
\ln(n_0 + 1)+C_9 \int_{0}^t\int_\Omega |\Delta c |^{4}+C_9T~~~\mbox{for all}~~~ t\in(0,T),\\
\end{array}
\end{equation}
By the regularity theory of parabolic equations, using Lemma \ref{fvfgsdfggfflemma45} and  Lemma \ref{jkkkklsssemma3.5},
we derive that there exist positive constants $C_{10}$ as well as $C_{11}$ and $C_{12}$ such that
\begin{equation}
\begin{array}{rl}
&\disp{\int_{0}^t\|\Delta c(\cdot,t)\|^{4}_{L^{4}(\Omega)}ds}\\
\leq &\disp{C_{10}\left(\int_{0}^t
\|w(\cdot,s)-u(\cdot,s)\cdot\nabla c(\cdot,s)\|^{4}_{L^{4}(\Omega)}ds\right).}\\
\leq &\disp{C_{11}\left(\int_{0}^t
\bigl[\|u(\cdot,s)\|^{{4}}_{L^{{8}}(\Omega)}\|\nabla c(\cdot,s)\|^{{4}}_{L^{{8}}(\Omega)}+1\bigr]ds\right)}\\
\leq &\disp{C_{12}\left(\int_{0}^t
\bigl[\|u(\cdot,s)\|^{{4}}_{L^{{8}}(\Omega)}+1\bigr]ds\right)~~\mbox{for all}~~ t\in(0,T).}\\
\end{array}
\label{cz2.5bbssse4567788v114}
\end{equation}
Next, recalling Lemma  \ref{lsssemma3788999.5}, then
Gagliardo-Nirenberg interpolation inequality  yields to
\begin{equation}
\begin{array}{rl}\label{ddfff44sssss55dddlll5eqx45xx12112ccgghh}
\disp&\disp \disp\|u(\cdot,s)\|^{{4}}_{L^{{8}}(\Omega)}\\
\leq &C_{13}\disp \bigl[\|\Delta u(\cdot,t)\|^{\frac{3}{2}}_{L^{2}(\Omega)}\|u\|^{\frac{5}{2}}_{L^{2}(\Omega)}+\|u\|^{4}_{L^{2}(\Omega)}\bigr]\\
\leq &C_{14}\disp \bigl[\|\Delta u(\cdot,t)\|^{\frac{3}{2}}_{L^{2}(\Omega)}+1\bigr]~~~\mbox{for all}~~ t \in (0,T_{max})\\
\end{array}
\end{equation}
with some $C_{13}>0$ and $C_{14}>0$. Inserting \dref{ddfff44sssss55dddlll5eqx45xx12112ccgghh} into \dref{cz2.5bbssse4567788v114}, then for any $\delta>0$ and  finite $T\in (0,T_{max}]$, we derive from  the Young inequality that there are $C_{15}>0$ and $C_{16}>0$ such that
\begin{equation}
\begin{array}{rl}
&\disp{\int_{0}^t\|\Delta c(\cdot,t)\|^{4}_{L^{4}(\Omega)}ds}\\
\leq &\disp{C_{15}\left(\int_{0}^t
\bigl[\|\Delta u(\cdot,t)\|^{\frac{3}{2}}_{L^{2}(\Omega)}+1\bigr]ds\right)}\\
\leq &\disp{\delta\int_{0}^t
\|\Delta u(\cdot,t)\|^{2}_{L^{2}(\Omega)}ds+C_{16}~~\mbox{for all}~~ t\in(0,T),}\\
\end{array}
\label{cz2.5bbssssdddsv114}
\end{equation}
which combined with \dref{ssddd344555ddsgggjjssdffsssf44ss55dddlll5eqx45xx12112ccgghh} implies  that
\begin{equation}\label{ssddd344sssd555ddsgggjjssdffsssf44ss55dddlll5eqx45xx12112ccgghh}
\begin{array}{rl}
& \disp\disp\int_\Omega(n  + 1)
\ln(n  + 1) +\frac{2}{C_7}\int_0^t\int_\Omega{}{(n  + 1)^2}+\frac{1}{4}\int_0^t\int_\Omega\frac{| \nabla n  |^2}{(n  + 1)}\\
\leq&\disp\int_\Omega(n_0 + 1)
\ln(n_0 + 1)+C_8[\delta\int_{0}^t
\|\Delta u(\cdot,t)\|^{2}_{L^{2}(\Omega)}ds+C_9]+C_8T~~~\mbox{for all}~~~ t\in(0,T)\\
\end{array}
\end{equation}
with
$$\delta=\frac{1}{4C_8\iota}$$
and
$$\iota=2C_1C_7e^{2C_4^2C_5}.$$
$\dref{ddsssdffssssssddsssf44ss55dddlll5eqx45xx12112ccgghh}+\iota\times\dref{ssddd344sssd555ddsgggjjssdffsssf44ss55dddlll5eqx45xx12112ccgghh}$ yields to
\begin{equation}\label{ssddd3ssdd44sssd555ddsgggjjssdffsssf44ss55dddlll5eqx45xx12112ccgghh}
\begin{array}{rl}
& \disp\disp\iota \int_\Omega(n  + 1)
\ln(n  + 1) +y(t)+\frac{\iota}{C_7}\int_0^t\int_\Omega{}{(n  + 1)^2}+\frac{1}{4}\int_0^t\int_\Omega\frac{| \nabla n  |^2}{(n  + 1)}\\
\leq&\disp y(0) e^{\int_{0}^{t}h_1(s)ds}-
\frac{1}{4}\int_{0}^{t}\int_{\Omega}|\Delta u|^2\\
&\disp+\iota\int_\Omega(n_0 + 1)
\ln(n_0 + 1)+C_8\iota\delta C_9+C_8T\iota~~~\mbox{for all}~~~ t\in(0,T),\\
\end{array}
\end{equation}
which implies \dref{ddfff44ss5sddff5ddd5eqx45xx12112ccgghh} and \dref{ddfff44ss5s234555sdd55eqx45xx12112ccgghh} immediately by using the definition of $y(t).$
\end{proof}

Based on a well-known continuous embedding with
the above Lemma  provides the uniform $L^p$ conditional estimates of $u$ as follows.

\begin{lemma}\label{lemmasdde45673.4dd}
Let $\kappa\in \mathbb{R}$ as well as $|S(n)|\leq C_S(1+n)^{-\alpha}$ with $\alpha=-\frac{1}{2}$.
Then
for any $p>1$ and finite $T\in (0,T_{max}]$, there exists $C>0$ such that
\begin{equation}
\int_{\Omega}| u (\cdot,t)|^p\leq C~~\mbox{for all}~~t\in(0,T).
\label{qidjfnhf}
\end{equation}
\end{lemma}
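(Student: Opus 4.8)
The plan is to read this off from the time-local $H^1$ estimate for $u$ already established in Lemma \ref{lsssemma3788999.5}, combined with the two-dimensional Sobolev embedding; no new differential inequality is needed.

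First I would invoke Lemma \ref{lsssemma3788999.5}: for the given finite $T\in(0,T_{max}]$ it provides a constant $C_1>0$ with $\int_\Omega|\nabla u(\cdot,t)|^2\leq C_1$ for all $t\in(0,T)$ (this is part of \dref{ddfff44ss5sddff5ddd5eqx45xx12112ccgghh}). Since $u(\cdot,t)=0$ on $\partial\Omega$ by the boundary condition in \dref{ccvvx1.ddfff731426677gg}, the Poincar\'e inequality upgrades this to a bound $\|u(\cdot,t)\|_{W^{1,2}(\Omega)}\leq C_2$ uniformly for $t\in(0,T)$. Then, because $\Omega\subset\mathbb{R}^2$ is bounded and smooth, the continuous embedding $W^{1,2}(\Omega)\hookrightarrow L^p(\Omega)$ holds for every $p\in[1,\infty)$; applied componentwise to $u$ this gives $\|u(\cdot,t)\|_{L^p(\Omega)}\leq C_3\|u(\cdot,t)\|_{W^{1,2}(\Omega)}\leq C_2C_3$ for all $t\in(0,T)$, and raising to the $p$-th power yields \dref{qidjfnhf} with $C:=(C_2C_3)^p$.

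I do not expect a genuine obstacle in this step: the substantive work — the finite-time-uniform control of $\|\nabla u(\cdot,t)\|_{L^2(\Omega)}$ together with the space--time bound on $\|\Delta u\|_{L^2(\Omega)}$, obtained under the critical assumption $\alpha=-\frac12$ by exploiting the logarithmic energy inequality and Lemma \ref{lsssemma3788999.5}'s testing procedure — has already been carried out, and here we merely transport it through a standard embedding. The only point to keep in mind is that, consistently with Lemma \ref{lsssemma3788999.5}, the constant $C$ depends on $p$ and on $T$, and the assertion is made only on finite subintervals $(0,T)$ of $(0,T_{max})$ rather than globally in time; this "conditional"/local character is exactly what is needed as input for the subsequent bootstrap toward the full $L^\infty$ bounds on $n$, $c$, $m$ and $u$.
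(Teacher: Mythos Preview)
Your proposal is correct and follows essentially the same route as the paper: invoke the finite-time $\|\nabla u(\cdot,t)\|_{L^2(\Omega)}$ bound from Lemma \ref{lsssemma3788999.5} and apply the two-dimensional Sobolev embedding $W^{1,2}(\Omega)\hookrightarrow L^p(\Omega)$. Your added remark about the Poincar\'e inequality (using $u|_{\partial\Omega}=0$) to pass from the gradient bound to a full $W^{1,2}$ bound makes explicit a step the paper leaves implicit, and your observation that the resulting constant depends on $T$ is accurate and consistent with the statement.
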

\begin{proof}
For any $p>1$, recalling Lemma \ref{lsssemma3788999.5},
by applying Sobolev embedding $W^{1,2}(\Omega)\hookrightarrow L^p(\Omega)$, we derive that  \dref{qidjfnhf} holds.
\end{proof}

\begin{lemma}\label{lsssemssddma3.5}
Let $|S(n)|\leq C_S(1+n)^{-\alpha}$ and $\alpha=\frac{1}{2}$.  Then for  any finite $T\in (0,T_{max}]$ and $p>2$, 
one can find $C > 0$
such that
\begin{equation}
\begin{array}{rl}\label{ddfff44ss55dddserr5dddeqx45xx12112ccgghh}
\disp\int_\Omega
n  ^p(\cdot,t)
\leq &
C~~~~ \mbox{for all}~ t\in(0,T).\\
\end{array}
\end{equation}
\end{lemma}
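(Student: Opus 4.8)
The plan is to derive, for a fixed $p>2$ and a fixed finite $T\in(0,T_{max}]$, an autonomous differential inequality of the type covered by Lemma \ref{lemma630} for $y(t):=\int_\Omega n^p(\cdot,t)$, with a forcing term whose integral over time windows of length $\tau:=\min\{1,\tfrac12 T_{max}\}$ is controlled by parabolic regularity for the second equation.

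First I would test the first equation in \dref{1.dddduiikkldffdffg1} by $n^{p-1}$ and integrate by parts. Since $\nabla\cdot u=0$ and $u=0$ on $\partial\Omega$ the convective term disappears, the condition $\partial_\nu c=0$ removes the boundary integrals (and forces $\partial_\nu n=0$), and the reactive term contributes $-\int_\Omega n^p m\le 0$, which is discarded. Writing the chemotactic term in divergence form through $G(s):=(p-1)\int_0^s\tau^{p-1}S(\tau)\,d\tau$, which by \dref{x1.73142vghf48rtgyhu}, \dref{x1.73142vghf48gg} and $\alpha=-\tfrac12$ satisfies $0\le G(s)\le C(1+s)^{p+\frac12}$, one integration by parts gives
$$\frac{1}{p}\frac{d}{dt}\int_\Omega n^p+(p-1)\int_\Omega n^{p-2}|\nabla n|^2=-\int_\Omega G(n)\Delta c\le C\int_\Omega (1+n)^{p+\frac12}|\Delta c|,$$
and by Young's inequality the right-hand side is at most $\delta\int_\Omega(1+n)^{p+1}+C(\delta)\int_\Omega|\Delta c|^{2(p+1)}$ for any $\delta>0$.

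Next, exactly as in the $\kappa=0$ bootstrap already carried out above, the mass bound \dref{ddfgczhhhh2.5ghjjjssddju48cfg924ghyuji} together with the two-dimensional Gagliardo--Nirenberg inequality yields $\int_\Omega n^{p+1}\le C(\|\nabla n^{p/2}\|_{L^2(\Omega)}^2+1)=C(\tfrac{p^2}{4}\int_\Omega n^{p-2}|\nabla n|^2+1)$, so that $(p-1)\int_\Omega n^{p-2}|\nabla n|^2\ge c_p\int_\Omega n^{p+1}-C$ with some $c_p>0$. Choosing $\delta$ small (and using $\int_\Omega(1+n)^{p+1}\le C(1+\int_\Omega n^{p+1})$) to absorb the first term on the right, and invoking $\int_\Omega n^{p+1}\ge |\Omega|^{-1/p}(\int_\Omega n^p)^{(p+1)/p}$, I arrive at
$$y'(t)+A\,y(t)^{\frac{p+1}{p}}\le h(t)+C\qquad\text{for all }t\in(0,T),$$
with $A>0$ and $h(t):=C(\delta)\,p\int_\Omega|\Delta c(\cdot,t)|^{2(p+1)}$.

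Finally, to apply Lemma \ref{lemma630} with exponent $\tfrac{p+1}{p}>1$ it remains to check $\int_{(t-\tau)_+}^t\bigl(h(s)+C\bigr)\,ds\le B$ uniformly in $t$, i.e. $\int_{(t-\tau)_+}^t\|\Delta c(\cdot,s)\|_{L^{2(p+1)}(\Omega)}^{2(p+1)}\,ds\le C$. This I would obtain from the maximal $L^q$-regularity estimate applied to $c_t-\Delta c+c=m-u\cdot\nabla c$ on each such window, just as in \dref{cz2.5bbssse4567788v114}: indeed $\|m\|_{L^{2(p+1)}(\Omega)}\le C$ by Lemma \ref{fvfgsdfggfflemma45}, and $\|u\cdot\nabla c\|_{L^{2(p+1)}(\Omega)}\le\|u\|_{L^{4(p+1)}(\Omega)}\|\nabla c\|_{L^{4(p+1)}(\Omega)}\le C$ by Lemma \ref{lemmasdde45673.4dd} (on the finite interval $(0,T)$) and Lemma \ref{jkkkklsssemma3.5}, so the source is bounded in $L^{2(p+1)}(\Omega)$ uniformly in time and the window estimate follows; Lemma \ref{lemma630} then gives $y(t)\le C$ on $(0,T)$, which is \dref{ddfff44ss55dddserr5dddeqx45xx12112ccgghh}. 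The step demanding the most care is this last one --- producing a bound on $\int_{(t-\tau)_+}^t\|\Delta c\|_{L^{2(p+1)}}^{2(p+1)}$ that is genuinely uniform in $t$, which rests on the uniform-in-time higher integrability of $u$ from Lemma \ref{lemmasdde45673.4dd} and is precisely why the conclusion is confined to finite time intervals; once the critical power $(1+n)^{p+1}$ has been absorbed on the left, everything else is routine.
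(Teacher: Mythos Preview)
Your proof is correct and follows essentially the same route as the paper: test by $n^{p-1}$, move the chemotactic term onto $\Delta c$ via an integration by parts, absorb $\int_\Omega(1+n)^{p+1}$ with the two-dimensional Gagliardo--Nirenberg inequality and the mass bound, and close via Lemma \ref{lemma630} after bounding $\int_{(t-\tau)_+}^t\|\Delta c\|_{L^{2(p+1)}}^{2(p+1)}$ by maximal parabolic regularity for the $c$-equation. The only tactical difference is in this last step: the paper handles the source $u\cdot\nabla c$ by a self-closing argument (splitting as $\|u\|_{L^{3q}}^{3q}+\|\nabla c\|_{L^{3q/2}}^{3q/2}$, interpolating $\|\nabla c\|_{L^{3q/2}}$ against $\|\Delta c\|_{L^q}$, and reabsorbing), whereas you invoke the already-proved uniform bound on $\|\nabla c\|_{L^{4(p+1)}}$ from Lemma \ref{jkkkklsssemma3.5} directly, which is a legitimate and slightly cleaner shortcut.
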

\begin{proof}
Using $n ^{p-1}$ as a test function for the first equation in \dref{1.dddduiikkldffdffg1}, integrating by parts and
using that
$\nabla\cdot u  = 0$,, we obtain
\begin{equation}\label{ddssf44s5eqx45xx12112ccgghh}
\begin{array}{rl}
& \disp\frac{1}{p}\frac{d}{dt}\disp\int_\Omega n ^p + (p-1)\disp\int_\Omega n ^{p-2}{| \nabla n  |^2}\\
\leq&\disp (p-1)\int_\Omega  n ^{p-1}{ C_S(1+n )^{-\alpha}}{}\nabla n  \cdot \nabla c \\
=&\disp \int_\Omega \nabla \int_0^{n }{ \tau^{p-1}C_S(1+\tau)^{-\alpha}}{}d\tau \cdot \nabla c \\
=&\disp -\int_\Omega \int_0^{n }{\tau^{p-1} C_S(1+\tau)^{-\alpha}}{}d\tau \Delta c \\
\leq&\disp C_S\int_\Omega n ^{p}(1+n )^{(-\alpha)_+}|\Delta c |~~~\mbox{for all}~~~ t\in(0,T_{max}).\\
\end{array}
\end{equation}
Since $p > 2$ implies that $\frac{2}{p}<\frac{2(p+1)}{p}<+\infty$,
and thereby an application of the Gagliardo-Nirenberg inequality along with Lemma \ref{fvfgsdfggfflemma45} provides  $C_1> 0$ and $C_2> 0$
such that
 \begin{equation}\label{sssddsssdffssssssf44ss55dddlll5eqx45xx12112ccgghh}
\begin{array}{rl}
& \disp\int_\Omega{}{n ^{p+1}}\\
=&\disp \|n ^{\frac{p}{2}}\|^{\frac{2(p+1)}{p}}_{L^{\frac{2(p+1)}{p}}(\Omega)}\\
\leq&\disp C_1(\|\nabla n ^{\frac{p}{2}}\|^2_{L^2(\Omega)}\|n ^{\frac{p}{2}}\|^{\frac{2}{p}}_{L^\frac{2}{p}(\Omega)}+\|n ^{\frac{p}{2}}\|^{\frac{2(p+1)}{p}}_{L^\frac{2}{p}(\Omega)})\\
\leq&\disp C_2(\|\nabla n ^{\frac{p}{2}}|^2_{L^2(\Omega)}+1)~~~\mbox{for all}~~~ t\in(0,T_{max}),
\end{array}
\end{equation}
and hence
$$\disp\|\nabla n ^{\frac{p}{2}}|^2_{L^2(\Omega)}\geq\frac{1}{C_2}(\int_\Omega{}{n ^{p+1}}-1)~~~\mbox{for all}~~~ t\in(0,T_{max}),$$
where we use that employing \dref{ddssf44s5eqx45xx12112ccgghh} and the Young inequality we can find $C_3>0$  satisfying
\begin{equation}\label{344555ddsssdffsss89009kif44ss55dddlll5eqx45xx12112ccgghh}
\begin{array}{rl}
& \disp\frac{1}{p}\frac{d}{dt}\disp\int_\Omega n ^p
+ \disp\frac{p-1}{2}\disp\int_\Omega n ^{p-2}{| \nabla n  |^2}+\frac{1}{2C_2}\frac{p-1}{2}\frac{4}{p^2}\int_\Omega{}{n ^{p+1}}\\
\leq&\disp\disp\frac{1}{4C_2}\frac{p-1}{2}\frac{4}{p^2}\int_\Omega{}{n ^{p+1}}+C_3 \int_\Omega |\Delta c |^{\frac{p+1}{(1-(-\alpha)_+}}+\frac{1}{2C_2}\frac{p-1}{2}\frac{4}{p^2}~~~\mbox{for all}~~~ t\in(0,T_{max}).\\
\end{array}
\end{equation}
In the following we will show that $\int_{(t-\hat{\tau})_+}^t\int_\Omega |\Delta c |^{\frac{p+1}{(1-(-\alpha)_+}}$ can be controlled by using Lemma \ref{lemmasdde45673.4dd} and Lemma \ref{fvfgsdfggfflemma45}, where
\begin{equation}\label{3009kif44ss55dddlll5eqx45xx12112ccgghh}\hat{\tau}=\min\{1,\frac{1}{2}T\}.
\end{equation}
 To this end, for any $q>1,$ and applying the the regularity theory of parabolic equations on parabolic equations  with the  Neumann boundary condition, we find that there exists some positive constant
$C_4$ as well as $C_5$ and $C_6$ such that
\begin{equation}
\begin{array}{rl}
&\disp{\int_{(t-\hat{\tau})_+}^t\|\Delta c(\cdot,t)\|^{q}_{L^{q}(\Omega)}ds}\\
\leq &\disp{C_4\left(\int_{(t-\hat{\tau})_+}^t
\|m(\cdot,s)-u(\cdot,s)\cdot\nabla c(\cdot,s)\|^{q}_{L^{q}(\Omega)}ds\right).}\\
\leq &\disp{C_5\left(\int_{(t-\hat{\tau})_+}^t
\bigl[\|u(\cdot,s)\|^{{3q}}_{L^{{3q}}(\Omega)}+\|\nabla c(\cdot,s)\|^{\frac{3q}{2}}_{L^{\frac{3q}{2}}(\Omega)}+1\bigr]ds\right)}\\
\leq &\disp{C_6\left(\int_{(t-\hat{\tau})_+}^t
[\|\nabla c(\cdot,s)\|^{\frac{3q}{2}}_{L^{\frac{3q}{2}}(\Omega)}+1]ds\right)~~\mbox{for all}~~ t\in(0,T-\hat{\hat{\tau}})}\\
\end{array}
\label{cz2.5bbsssv8876899114}
\end{equation}
by using Lemma \ref{fvfgsdfggfflemma45} and Lemma \ref{lemmasdde45673.4dd},
with any finite $T\in (0,T_{max}]$.
Next, again,
according to Gagliardo-Nirenberg interpolation inequality and Lemma \ref{fvfgsdfggfflemma45}, we can pick $C_7>0$ and $C_8>0$
such that
\begin{equation}
\begin{array}{rl}\label{ddfff44sssssqx45xx1789992112ccgghh}
\disp&\disp \disp\int_\Omega| \nabla c  |^{\frac{3q}{2}}\\
\leq &C_7\disp [\|\Delta c(\cdot,t)\|^{\frac{\frac{3q}{4}-1}{1-\frac{1}{q}}}_{L^{q}(\Omega)}\|c\|^{\frac{3q}{2}-\frac{\frac{3q}{4}-1}{1-\frac{1}{q}}}_{L^{\infty}(\Omega)}+\|c\|^{\frac{3q}{2}}_{L^{\infty}(\Omega)}]\\
\leq &C_8\disp \bigl[\|\Delta c(\cdot,t)\|^{\frac{\frac{3q}{4}-1}{1-\frac{1}{q}}}_{L^{q}(\Omega)}+1\bigr]~~~\mbox{for all}~~ t \in (0,T).\\
\end{array}
\end{equation}
Also thanks to $q > 1$, we can verify that
$${\frac{\frac{3q}{4}-1}{1-\frac{1}{q}}}<q,$$
and whereupon an application of the Young inequality entails
\begin{equation*}
\begin{array}{rl}
\disp \disp \disp\int_\Omega| \nabla c  |^{\frac{3q}{2}}
\leq &\disp\frac{C_8}{2C_6}\disp \bigl[\|\Delta c(\cdot,t)\|^{q}_{L^{q}(\Omega)}+1\bigr]~~~\mbox{for all}~~ t \in (0,T).\\
\end{array}
\end{equation*}
Substitute the above inequality  into \dref{cz2.5bbsssv8876899114} to have
\begin{equation}
\begin{array}{rl}
\disp{\int_{(t-\hat{\tau})_+}^t\|\Delta c(\cdot,t)\|^{q}_{L^{q}(\Omega)}ds}
\leq &\disp{C_9~~\mbox{for all}~~ t\in(0,T-\hat{\tau})}\\
\end{array}
\label{cz2.5bbsssv1ssssddffgs14}
\end{equation}
with some $C_9>0$. We abbreviate $q=\frac{p+1}{1-(-\alpha)_+}$ and invoke \dref{cz2.5bbsssv1ssssddffgs14} to
fix $C_{10}>0$ fulfilling
\begin{equation}
\begin{array}{rl}
\disp{\int_{(t-\hat{\tau})_+}^t\|\Delta c(\cdot,t)\|^{\frac{p+1}{(1-(-\alpha)_+}}_{L^{\frac{p+1}{(1-(-\alpha)_+}}(\Omega)}ds}
\leq &\disp{C_{10}~~\mbox{for all}~~ t\in(0,T-\hat{\tau}),}\\
\end{array}
\label{ssdddfc900900z2.5bbkkkksssv1sssss14}
\end{equation}
where $\hat{\tau}$ is given by \dref{3009kif44ss55dddlll5eqx45xx12112ccgghh}.
Recalling   \dref{344555ddsssdffsss89009kif44ss55dddlll5eqx45xx12112ccgghh} and using \dref{ssdddfc900900z2.5bbkkkksssv1sssss14}, an application of the comparison argument of an ordinary differential
equation yields \dref{ddfff44ss55dddserr5dddeqx45xx12112ccgghh}.
\end{proof}

\begin{lemma}\label{lemma63kkllll0jklhhjj}
Let  $\alpha>-1,\kappa = 0$ or  $\alpha>-\frac{1}{2},\kappa \in\mathbb{R}$. There exists a positive constant $C$  such that
\begin{equation}
\int_{\Omega}{|\nabla u (\cdot,t)|^2}\leq C~~\mbox{for all}~~ t\in(0, T_{max}).
\label{3333ddxcvbbggddfgcz2vv.5ghju48cfg924ghyuji}
\end{equation}
%
\end{lemma}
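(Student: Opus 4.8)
The plan is to control $\int_\Omega|\nabla u|^2$ by testing the projected fluid equation against the Stokes operator applied to $u$. Applying the Helmholtz projection $\mathcal P$ to the fourth equation in \dref{1.dddduiikkldffdffg1} gives $u_t+Au=-\kappa\mathcal P[(u\cdot\nabla)u]+\mathcal P[(n+m)\nabla\phi]$ on $(0,T_{max})$, and testing this identity by $Au$ while using $\int_\Omega|A^{\frac12}u|^2=\int_\Omega|\nabla u|^2$ yields
\[
\frac12\frac{d}{dt}\int_\Omega|\nabla u|^2+\int_\Omega|Au|^2=-\kappa\int_\Omega\mathcal P[(u\cdot\nabla)u]\cdot Au+\int_\Omega\mathcal P[(n+m)\nabla\phi]\cdot Au\quad\text{for }t\in(0,T_{max}).
\]
For the buoyancy term, Young's inequality together with \dref{dd1.1fghyuisdakkkllljjjkk} and the contractivity of $\mathcal P$ on $L^2$ gives $\int_\Omega\mathcal P[(n+m)\nabla\phi]\cdot Au\le\frac14\int_\Omega|Au|^2+C\int_\Omega(n^2+m^2)$, where $\int_\Omega m^2\le|\Omega|\,\|m_0\|_{L^\infty(\Omega)}^2$ by Lemma~\ref{fvfgsdfggfflemma45} and $\int_\Omega n^2\le C$ uniformly in $t$ --- by \dref{ssddfddfff44ss55ddd5eqx45xx12112ccgghh} when $\kappa=0,\alpha>-1$, and by \dref{ddfff44ss55ddd5kkkkkeqx45xx12112ccgghh} when $\kappa\in\mathbb R,\alpha>-\frac12$. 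Hence, in the Stokes case $\kappa=0$, where the convective term is absent, the inequality $\int_\Omega|\nabla u|^2\le\lambda_1^{-1}\int_\Omega|Au|^2$ (with $\lambda_1>0$ the smallest eigenvalue of $A$) reduces the identity to $\frac{d}{dt}\int_\Omega|\nabla u|^2+c\int_\Omega|\nabla u|^2\le C$ for a suitable $c>0$, so \dref{3333ddxcvbbggddfgcz2vv.5ghju48cfg924ghyuji} follows by an elementary ODE comparison.

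For $\kappa\in\mathbb R$ the convective term must be absorbed. I would bound $|\kappa|\int_\Omega|\mathcal P[(u\cdot\nabla)u]|\,|Au|\le\frac14\int_\Omega|Au|^2+C\|u\|_{L^\infty(\Omega)}^2\int_\Omega|\nabla u|^2$, and then invoke the two-dimensional Gagliardo--Nirenberg inequality $\|u\|_{L^\infty(\Omega)}^2\le C\|Au\|_{L^2(\Omega)}\|u\|_{L^2(\Omega)}+C\|u\|_{L^2(\Omega)}^2$ together with the uniform bound $\|u(\cdot,t)\|_{L^2(\Omega)}\le C$ from \dref{ddfff44ss55ddd5eqx45xx12112ccgghh}, followed once more by Young's inequality, to arrive at
\[
\frac{d}{dt}\int_\Omega|\nabla u|^2+\int_\Omega|Au|^2\le C\Bigl(\int_\Omega|\nabla u|^2\Bigr)^2+C\int_\Omega|\nabla u|^2+C\quad\text{for }t\in(0,T_{max}).
\]
Writing $y(t):=\int_\Omega|\nabla u(\cdot,t)|^2$ and $\rho(t):=Cy(t)+C$, this reads $y'\le\rho(t)y+C$, and the spatio-temporal estimate \dref{ddfff44ss555eqx45xx12112ccgghh} (with $\tau$ as in \dref{jvgxddrg}) gives $\int_t^{t+\tau}\rho(s)\,ds\le C$ for all admissible $t$. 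For each $t$ one then selects, exactly as in the proof of Lemma~\ref{9999lemma4563ddfff025xxhjkloghyui}, some $t_0\in(t-\tau,t)$ with $t_0\ge0$ and $y(t_0)\le C$ (using \dref{ddfff44ss555eqx45xx12112ccgghh} again, or the initial datum when $t<\tau$), and integrating $y'\le\rho y+C$ from $t_0$ to $t$ while recalling $t-t_0\le\tau\le1$ yields $y(t)\le\bigl(y(t_0)+C\tau\bigr)e^{\int_{t_0}^t\rho}\le C$, which is \dref{3333ddxcvbbggddfgcz2vv.5ghju48cfg924ghyuji}.

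I expect the only genuinely delicate point to be the case $\kappa\neq0$: the convection term forces the superlinear coefficient $\rho(t)\sim\int_\Omega|\nabla u|^2$ into the differential inequality, so the bound cannot be read off the inequality itself and must instead be extracted from the time-averaged control of $\int_\Omega|\nabla u|^2$ coming from the energy inequality \dref{ddfff44ss555eqx45xx12112ccgghh}, in combination with the localization that picks a favorable initial instant $t_0$. When $\kappa=0$ the whole argument is linear and immediate. Note also that this is precisely where the hypothesis $\alpha>-\frac12$ (rather than $\alpha\ge-\frac12$) enters, since it is needed to make the uniform-in-time $L^2$ bound on $n$ available; the borderline case $\alpha=-\frac12$ is excluded from the present statement.
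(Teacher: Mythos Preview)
Your proposal is correct and follows essentially the same route as the paper: test the projected fluid equation by $Au$, estimate the buoyancy term via the uniform $L^2$ bounds on $n$ and $m$, and for $\kappa\neq0$ control the convective term by a Gagliardo--Nirenberg interpolation (the paper uses $\|u\|_{L^4}^2\|\nabla u\|_{L^4}^2$ rather than your $\|u\|_{L^\infty}^2\|\nabla u\|_{L^2}^2$, but both reduce to $C\|Au\|_{L^2}\|\nabla u\|_{L^2}^2$ after invoking the $L^2$ bound on $u$), obtain $y'\le\rho(t)y+h(t)$ with $\rho\sim y$, and close via the time-averaged bound on $\int_\Omega|\nabla u|^2$ from \dref{ddfff44ss555eqx45xx12112ccgghh} together with the $t_0$-localization argument. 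Your separate, simpler treatment of the Stokes case $\kappa=0$ via $y'+cy\le C$ is a clean variant of what the paper does implicitly (there $\rho\equiv0$), and your closing remark on why $\alpha>-\tfrac12$ (not $\ge$) is the relevant hypothesis is accurate.
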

\begin{proof}
Firstly, applying the Helmholtz projection to both sides of the fourth  equation in \dref{1.dddduiikkldffdffg1},
then multiplying the result identified by $Au $, integrating by parts, and using the Young inequality, we find that
\begin{equation}
\begin{array}{rl}
&\disp{\frac{1}{{2}}\frac{d}{dt}\|A^{\frac{1}{2}}u \|^{{{2}}}_{L^{{2}}(\Omega)}+
\int_{\Omega}|Au |^2 }
\\
=&\disp{ \int_{\Omega}Au \mathcal{P}(-\kappa
(u  \cdot \nabla)u )+ \int_{\Omega}\mathcal{P}(n +m )\nabla\phi) Au }
\\
\leq&\disp{ \frac{1}{2}\int_{\Omega}|Au |^2+\kappa^2\int_{\Omega}
|(u  \cdot \nabla)u |^2+ 2\|\nabla\phi\|^2_{L^\infty(\Omega)}\int_{\Omega}(n ^2+m ^2)~~\mbox{for all}~~t\in(0,T_{max}).}
\end{array}
\label{ddfghgghjjnnhhkklld911cz2.5ghju48}
\end{equation}
It follows from the Gagliardo-Nirenberg inequality and the Cauchy-Schwarz inequality that with some $C_1 >0$ and $C_2 > 0$
\begin{equation}
\begin{array}{rl}
&\kappa^2\disp\int_{\Omega}
|( u  \cdot \nabla)u |^2\\
\leq&\disp{ \kappa^2\| u \|^2_{L^4(\Omega)}\|\nabla u \|^2_{L^4(\Omega)}}
\\
\leq&\disp{ \kappa^2C_1[\|\nabla  u \|_{L^2(\Omega)}\| u\|_{L^2(\Omega)}]
[\|A u \|_{L^2(\Omega)}\|\nabla u \|_{L^2(\Omega)}]
}\\
\leq&\disp{ \kappa^2C_1C_{2}\|\nabla u \|_{L^2(\Omega)}
[\|A u \|_{L^2(\Omega)}\|\nabla u \|_{L^2(\Omega)}]
~~\mbox{for all}~~t\in(0,T_{max})}.
\end{array}
\label{ssdcfvgddfghgghjd911cz2.5ghju48}
\end{equation}
Now, from the fact that $D( A^{\frac{1}{2}})  :=W^{1,2}_0(\Omega;\mathbb{R}^2) \cap L_{\sigma}^{2}(\Omega)$ and
\dref{czfvgb2.5ghhjuyuccvviihjj}, it follows that
\begin{equation}
\|\nabla  u \|_{L^2(\Omega)}=\|A^{\frac{1}{2}}  u \|_{L^2(\Omega)}.
\label{ssdcfdhhgghjjnnhhkklld911cz2.5ghju48}
\end{equation}
Due to  Theorem 2.1.1 in \cite{Sohr},  $\|A(\cdot)\|_{L^{2}(\Omega)}$ defines a norm
equivalent to $\|\cdot\|_{W^{2,2}(\Omega)}$ on $D(A)$.
This, together with the Young inequality and estimates \dref{ssdcfdhhgghjjnnhhkklld911cz2.5ghju48} and \dref{ssdcfvgddfghgghjd911cz2.5ghju48}, yields
$$
\begin{array}{rl}
&\kappa^2\disp\int_{\Omega}|( u  \cdot \nabla)u |^2
\\
\leq&\disp{ C_{3}\|A u \|_{L^2(\Omega)}\|\nabla u \|_{L^2(\Omega)}^2}
\\
\leq&\disp{ \frac{1}{4}\|A u \|_{L^2(\Omega)}+\kappa^4C_{1}^2C_{2}^2\|\nabla u \|_{L^2(\Omega)}^4
~~\mbox{for all}~~t\in(0,T_{max}),}
\end{array}
$$
which combining with \dref{ddfghgghjjnnhhkklld911cz2.5ghju48}   implies that
$$
\disp\frac{1}{{2}}\frac{d}{dt}\|A^{\frac{1}{2}}u \|^{{{2}}}_{L^{{2}}(\Omega)}
\leq\disp{ \kappa^4C_{1}^2C_{2}^2\|\nabla u \|_{L^2(\Omega)}^4+2 \|\nabla\phi\|^2_{L^\infty(\Omega)}\int_{\Omega}(n ^2+m ^2)~\mbox{for all}~t\in(0,T_{max}).}
$$
By the fact that $\|A^{\frac{1}{2}}u \|^{{{2}}}_{L^{{2}}(\Omega)} = \|\nabla u \|^{{{2}}}_{L^{{2}}(\Omega)},$ we conclude that
\begin{equation}
z'(t)\leq\rho(t)z(t)+ h(t)\disp{~~\mbox{for all}~~t\in(0,T_{max})},
\label{ddfghgffgghggddhjjjhjjnnhhkklld911cz2.5ghju48}
\end{equation}
where
$$
z(t) :=\int_{\Omega}|\nabla u (\cdot, t)|^2
$$
as well as
$$
\rho(t) =2\kappa^4C_{1}^2C_{2}^2\int_{\Omega}|\nabla u (\cdot, t)|^2
$$
and
$$
h(t)=4 \|\nabla\phi\|^2_{L^\infty(\Omega)}\int_{\Omega}[n ^2(\cdot,t)+m ^2(\cdot,t)].
$$
However,  Lemma \ref{ggh788999hjllsssemmkkllla3.5} along with  and Lemma \ref{fvfgsdfggfflemma45} warrants that for  some  positive constant $\alpha_0$,
\begin{equation}
\label{3333cz2.kkk5kke345677ddfddffddddf89001ddff214114114rrggjjkk}
\disp\int_{t}^{t+\tau}\int_\Omega  |\nabla {u }|^2\leq\alpha_{0}~~\mbox{for all}~~ t\in(0,T_{max}-\tau)
\end{equation}
and
\begin{equation}
\label{3333cz2.kkk5kke345fffff677ddfdddddf89001ddff214114114rrggjjkk}
\disp\int_{t}^{t+\tau}\int_\Omega (n   ^{2}+m   ^{2})\leq\alpha_{0}~~\mbox{for all}~~ t\in(0,T_{max}-\tau)
\end{equation}
with $\tau$ is the same as \dref{jvgxddrg}.
Now, \dref{3333cz2.kkk5kke345677ddfddffddddf89001ddff214114114rrggjjkk}
and \dref{3333cz2.kkk5kke345fffff677ddfdddddf89001ddff214114114rrggjjkk}    ensure that for all $t\in(0,T_{max}-\tau)$
$$
\int_{t}^{t+\tau}\rho(s)ds\leq\disp{ 2\kappa^4C_{1}^2C_{2}^2\alpha_0}
$$
and
$$
\int_{t}^{t+\tau}h(s)ds\leq\disp{ 4 \|\nabla\phi\|^2_{L^\infty(\Omega)}\alpha_0.}
$$
For given $t\in (0, T_{max})$, applying \dref{3333cz2.kkk5kke345677ddfddffddddf89001ddff214114114rrggjjkk} again,
we can choose $t_0 \geq 0$ such that $t_0\in [t-\tau, t)$ and
$$
\disp{\int_{\Omega}|\nabla u (\cdot,t_0)|^2\leq C_{3},}
$$
which combined with \dref{ddfghgffgghggddhjjjhjjnnhhkklld911cz2.5ghju48} implies that
\begin{equation}
\begin{array}{rl}
z(t)\leq&\disp{z(t_0)e^{\int_{t_0}^t\rho(s)ds}+\int_{t_0}^te^{\int_{s}^t\rho(\tau)d\tau}h(s)ds}
\\
\leq&\disp{C_{3}e^{2\kappa^4C_{1}^2C_{2}^2\alpha_0}+\int_{t_0}^te^{2\kappa^4C_{1}^2C_{2}^2\alpha_0}h(s)ds}
\\
\leq&\disp{C_{3}e^{2\kappa^4C_{1}^2C_{2}^2\alpha_0}+e^{2\kappa^4C_{1}^2C_{2}^2\alpha_0}4 \|\nabla\phi\|^2_{L^\infty(\Omega)}\alpha_{0}~~\mbox{for all}~~t\in(0,T_{max})}
\end{array}
\label{czfvgb2.5ghhddffggjuyghjjjuffghhhddfghhccvjkkklllhhjkkviihjj}
\end{equation}
by integration. The claimed inequality \dref{ddxcvbbggddfgcz2vv.5ghju48cfg924ghyuji} thus results from \dref{czfvgb2.5ghhddffggjuyghjjjuffghhhddfghhccvjkkklllhhjkkviihjj}.

\end{proof}

Recalling the above Lemma,
by applying Sobolev embedding $W^{1,2}(\Omega)\hookrightarrow L^p(\Omega)$, we can derive the following Lemma.

\begin{lemma}\label{leddddmma63kkllll0jklhhjj}
Let  $\alpha>-1,\kappa = 0$ or  $\alpha>-\frac{1}{2},\kappa \in\mathbb{R}$. Then for any $p>1,$ there exists a positive constant $C$ such that
\begin{equation}
\int_{\Omega}{| u (\cdot,t)|^p}\leq C~~\mbox{for all}~~ t\in(0, T_{max}).
\label{ddxcvbbggddfgcz2vv.5ghju48cfg924ghyuji}
\end{equation}
\end{lemma}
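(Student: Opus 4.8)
The plan is to promote the uniform bound on $\int_\Omega|\nabla u(\cdot,t)|^2$ provided by the preceding lemma to a uniform bound on the full norm $\|u(\cdot,t)\|_{W^{1,2}(\Omega)}$, and then to read off the desired $L^p$ estimate from the two-dimensional Sobolev embedding.

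First I would record that, by Lemma \ref{lemma63kkllll0jklhhjj}, there is $C_1>0$ with $\int_\Omega|\nabla u(\cdot,t)|^2\le C_1$ for all $t\in(0,T_{max})$, under either of the two hypotheses on $\alpha$ and $\kappa$. Since the solution keeps the no-slip boundary condition $u=0$ on $\partial\Omega$ from \dref{ccvvx1.ddfff731426677gg}, one has $u(\cdot,t)\in W^{1,2}_0(\Omega;\mathbb{R}^2)$ for every $t\in(0,T_{max})$, so applying the Poincar\'{e} inequality componentwise yields $C_2>0$ such that $\|u(\cdot,t)\|_{L^2(\Omega)}\le C_2\|\nabla u(\cdot,t)\|_{L^2(\Omega)}\le C_2C_1^{1/2}$. (When $\kappa=0$ this $L^2$ bound, and in fact the whole assertion, is already contained in Lemma \ref{lemma3.4}.) Adding the two estimates gives $C_3>0$ with $\|u(\cdot,t)\|_{W^{1,2}(\Omega)}\le C_3$ for all $t\in(0,T_{max})$. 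Because $\Omega\subset\mathbb{R}^2$ is bounded with smooth boundary, the embedding $W^{1,2}(\Omega)\hookrightarrow L^p(\Omega)$ is continuous for every finite $p\ge 1$, with operator norm $C_S=C_S(p,\Omega)$, whence $\|u(\cdot,t)\|_{L^p(\Omega)}\le C_S\|u(\cdot,t)\|_{W^{1,2}(\Omega)}\le C_SC_3=:C$ for all $t\in(0,T_{max})$; this is precisely \dref{ddxcvbbggddfgcz2vv.5ghju48cfg924ghyuji}.

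I do not expect a genuine obstacle at this step: all the hard analysis has already been done in establishing \dref{3333ddxcvbbggddfgcz2vv.5ghju48cfg924ghyuji}, which in turn rests on the uniform $L^p$-estimates for $n$ (Lemma \ref{ggh788999hjllsssemmkkllla3.5}, respectively Lemma \ref{lsssemssddma3.5}) feeding the linear-differential-inequality argument in the proof of Lemma \ref{lemma63kkllll0jklhhjj}. The only point deserving attention is that the two-dimensional embedding $W^{1,2}\hookrightarrow L^p$ degenerates as $p\to\infty$, so the constant $C$ must be allowed to depend on $p$, which is exactly the form in which the lemma is stated.
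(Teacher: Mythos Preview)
Your proof is correct and follows essentially the same route as the paper: invoke the $\nabla u$ bound from Lemma~\ref{lemma63kkllll0jklhhjj} and apply the two-dimensional Sobolev embedding $W^{1,2}(\Omega)\hookrightarrow L^p(\Omega)$. Your explicit use of the Poincar\'e inequality (via $u\in W^{1,2}_0$) to pass from the gradient bound to the full $W^{1,2}$ norm is a helpful clarification that the paper leaves implicit.
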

\begin{proof}
For any $p>1$, recalling Lemma \ref{lemma63kkllll0jklhhjj},
by applying Sobolev embedding $W^{1,2}(\Omega)\hookrightarrow L^p(\Omega)$, we derive that  \dref{ddxcvbbggddfgcz2vv.5ghju48cfg924ghyuji} holds.

\end{proof}

Based on above lemmas,
we could see the solution $(n ,c ,m ,u )$ of \dref{1.dddduiikkldffdffg1}, \dref{ccvvx1.73142sdd6677gg}--\dref{ccvvx1.ddfff731426677gg} is indeed globally solvable. 

\begin{lemma}\label{lemmassddddff45630223}
Let  $(n ,c ,m ,u )$ be a solution of \dref{1.dddduiikkldffdffg1}, \dref{ccvvx1.73142sdd6677gg}--\dref{ccvvx1.ddfff731426677gg}.
Assume that   \dref{sssdddd1.1sssfghyuisdakkkllljjjkk} and \dref{sssdddd1.1sssfghyuisdakkkllljjddddjkk} hold. Then   $(n ,c ,m ,u )$ solves \dref{1.dddduiikkldffdffg1}, \dref{ccvvx1.73142sdd6677gg}--\dref{ccvvx1.ddfff731426677gg} in the classical sense in $\Omega\times (0,\infty).$
Furthermore, for each $T > 0$, there exists $C(T ) > 0$ such that 
\begin{equation}
\|n (\cdot, t)\|_{L^\infty(\Omega)}+\|c (\cdot, t)\|_{W^{1,\infty}(\Omega)}+\|m (\cdot, t)\|_{W^{1,\infty}(\Omega)}+\|A^\gamma u (\cdot, t)\|_{L^2(\Omega)}\leq C~~ \mbox{for all}~~ t\in(0,T).
\label{1.163072xggtsssstsdddyyu}
\end{equation}
While if  $\alpha>-1,\kappa = 0$ or  $\alpha>-\frac{1}{2},\kappa \in\mathbb{R},$
 then this solution is bounded in
$\Omega\times(0,\infty)$ in the sense that
\begin{equation}
\|n (\cdot, t)\|_{L^\infty(\Omega)}+\|c (\cdot, t)\|_{W^{1,\infty}(\Omega)}+\|m (\cdot, t)\|_{W^{1,\infty}(\Omega)}+\|A^\gamma u (\cdot, t)\|_{L^2(\Omega)}\leq C~~ \mbox{for all}~~ t>0.
\label{1.163072xggttsdddyyu}
\end{equation}

\end{lemma}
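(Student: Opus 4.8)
The plan is to bootstrap the $L^p$-bounds on $n$, $\nabla c$ and $u$ already established into the $L^\infty$-type bounds appearing in the extensibility criterion of Lemma~\ref{lemma70}, thereby excluding $T_{max}<\infty$; in the two regimes where those $L^p$-bounds are uniform in time the bootstrap produces time-uniform bounds, which is what yields \dref{1.163072xggttsdddyyu}.

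First I would collect the available $L^p$-information, distinguishing three cases. If $\alpha>-1,\kappa=0$, then for every finite $p$ there is $C>0$ independent of $t$ with $\|n(\cdot,t)\|_{L^p(\Omega)}\le C$ (by the $L^p$-estimate for $n$ proved for $\kappa=0$), $\|\nabla c(\cdot,t)\|_{L^p(\Omega)}\le C$ (Lemma~\ref{lemma4563025xxhjkloghyui}), $\|u(\cdot,t)\|_{L^p(\Omega)}\le C$ (Lemma~\ref{leddddmma63kkllll0jklhhjj}) and $\|\nabla u(\cdot,t)\|_{L^2(\Omega)}\le C$ (Lemma~\ref{lemma63kkllll0jklhhjj}), for all $t\in(0,T_{max})$. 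If $\alpha>-\frac12,\kappa\in\mathbb{R}$, the same holds with the bounds on $n$ and $\nabla c$ supplied instead by Lemma~\ref{ggh788999hjllsssemmkkllla3.5} and Lemma~\ref{jkkkklsssemma3.5}. Finally, if $\alpha=-\frac12,\kappa\ne0$, then $\|\nabla c(\cdot,t)\|_{L^p(\Omega)}\le C$ still holds on $(0,T_{max})$ by Lemma~\ref{jkkkklsssemma3.5}, whereas the corresponding bounds on $n$, $u$ and $\nabla u$ are available only on each fixed $(0,T)$, with $C=C(T)$, by Lemma~\ref{lsssemssddma3.5}, Lemma~\ref{lemmasdde45673.4dd} and Lemma~\ref{lsssemma3788999.5}. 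In all cases $\|m(\cdot,t)\|_{L^\infty(\Omega)}\le\|m_0\|_{L^\infty(\Omega)}$ by Lemma~\ref{fvfgsdfggfflemma45}. The bootstrap that follows is run on $(0,T_{max})$ with $t$-independent constants in the first two cases, and on a fixed $(0,T)$ in the third; the steps are identical.

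Next I would carry out the bootstrap. Since $\nabla\cdot u=0$, the first equation reads $n_t=\Delta n-\nabla\cdot\bigl(nS(n)\nabla c+nu\bigr)-nm$, and as $|nS(n)|\le C_S\,n(1+n)^{-\alpha}$ with $n\in L^p(\Omega)$ for all finite $p$, the flux $nS(n)\nabla c+nu$ and the term $nm$ lie in $L^q(\Omega)$ for every finite $q$; feeding this into the variation-of-constants representation for $n$ and using the standard $L^q$--$L^\infty$ smoothing estimates for the Neumann heat semigroup ($\|e^{\sigma\Delta}\nabla\cdot g\|_{L^\infty(\Omega)}\le C\sigma^{-\frac12-\frac1q}\|g\|_{L^q(\Omega)}$ for $q>2$ and $\|e^{\sigma\Delta}g\|_{L^\infty(\Omega)}\le C\sigma^{-\frac1q}\|g\|_{L^q(\Omega)}$, with exponential decay factors inserted where needed for the time-uniform bounds) together with $\|e^{\sigma\Delta}n_0\|_{L^\infty(\Omega)}\le\|n_0\|_{L^\infty(\Omega)}$, gives $\|n(\cdot,t)\|_{L^\infty(\Omega)}\le C$. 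Then $nm\in L^\infty(\Omega)$, so rewriting the third equation as $m_t=\Delta m-\nabla\cdot(mu)-nm$ and applying parabolic smoothing once more yields $\|\nabla m(\cdot,t)\|_{L^\infty(\Omega)}\le C$; likewise, from $c_t=\Delta c-c+m-\nabla\cdot(cu)$ with $m\in L^\infty(\Omega)$ and $cu\in L^q(\Omega)$ for all finite $q$, a standard bootstrap through fractional powers of $-\Delta+1$ (or maximal Sobolev regularity) gives $\|c(\cdot,t)\|_{W^{1,\infty}(\Omega)}\le C$. Finally, from the projected fourth equation $u(\cdot,t)=e^{-tA}u_0+\int_0^te^{-(t-s)A}\mathcal{P}\bigl[(n+m)\nabla\phi-\kappa\nabla\cdot(u\otimes u)\bigr]\,ds$, using $(n+m)\nabla\phi\in L^q(\Omega)$ and $u\otimes u\in L^q(\Omega)$ for all finite $q$, the Stokes smoothing estimates $\|A^\gamma e^{-\sigma A}\mathcal{P}f\|_{L^2(\Omega)}\le C\sigma^{-\gamma-(\frac1q-\frac12)}e^{-\lambda\sigma}\|f\|_{L^q(\Omega)}$ and $\|A^\gamma e^{-\sigma A}\mathcal{P}\nabla\cdot g\|_{L^2(\Omega)}\le C\sigma^{-\gamma-\frac1q}e^{-\lambda\sigma}\|g\|_{L^q(\Omega)}$ with $q$ chosen large enough that $\gamma+\frac1q<1$ (possible since $\gamma<1$), and $\|A^\gamma e^{-tA}u_0\|_{L^2(\Omega)}\le\|A^\gamma u_0\|_{L^2(\Omega)}$ from \dref{ccvvx1.731426677gg}, give $\|A^\gamma u(\cdot,t)\|_{L^2(\Omega)}\le C$ for the exponent $\gamma\in(\frac12,1)$ fixed there.

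Combining the four bounds on $\|n(\cdot,t)\|_{L^\infty(\Omega)}$, $\|c(\cdot,t)\|_{W^{1,\infty}(\Omega)}$, $\|m(\cdot,t)\|_{W^{1,\infty}(\Omega)}$ and $\|A^\gamma u(\cdot,t)\|_{L^2(\Omega)}$ contradicts the blow-up alternative of Lemma~\ref{lemma70} as soon as $T_{max}<\infty$; hence $T_{max}=\infty$, the solution is global and classical, and \dref{1.163072xggtsssstsdddyyu} holds on every finite interval. In the regimes $\alpha>-1,\kappa=0$ and $\alpha>-\frac12,\kappa\in\mathbb{R}$ all constants in the bootstrap are $t$-independent, so the bounds extend to $(0,\infty)$ and \dref{1.163072xggttsdddyyu} follows. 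The step I expect to be the main obstacle is the $L^q$-to-$L^\infty$ passage for $n$ in the borderline case $\alpha=-\frac12,\kappa\ne0$: there the underlying $L^p$-bounds on $n$ and $u$ are only locally uniform in time, which forces the whole chain to be run on a fixed $(0,T)$ with $T$-dependent constants --- harmless for global solvability, but exactly the reason why the uniform bound \dref{1.163072xggttsdddyyu} cannot be claimed in that regime.
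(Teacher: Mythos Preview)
Your overall scheme is correct and matches the paper's: collect the $L^p$-bounds on $n,\nabla c,u,\nabla u$ in the three regimes, then bootstrap via variation-of-constants and semigroup smoothing to reach the norms in the blow-up criterion. The $n$-, $m$- and $c$-steps are fine and in fact slightly more streamlined than the paper's (which first passes through an energy estimate for $\|\nabla m\|_{L^2}$ before applying the heat semigroup).

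There is, however, a genuine gap in your $A^\gamma u$ step. The estimate you invoke,
\[
\|A^\gamma e^{-\sigma A}\mathcal{P}\nabla\cdot g\|_{L^2(\Omega)}\le C\,\sigma^{-\gamma-\frac1q}e^{-\lambda\sigma}\|g\|_{L^q(\Omega)}
\]
with $q$ large, is not correct: for $q\ge 2$ on a bounded planar domain the best exponent one gets is $-\gamma-\frac12$, and since $\gamma>\frac12$ this is not integrable near $\sigma=0$. (For $q\le 2$ the exponent $-\gamma-\frac1q$ is valid but then $\frac1q\ge\frac12$, so again non-integrable.) Hence treating the convection in divergence form $\nabla\cdot(u\otimes u)$ and relying only on $u\otimes u\in L^q$ does not close. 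The paper's remedy---and you already have the ingredient for it---is to keep the nonlinearity as $(u\cdot\nabla)u$, choose $p_0\in\bigl(\frac{2}{3-2\gamma},2\bigr)$, and estimate
\[
\|(u\cdot\nabla)u\|_{L^{p_0}(\Omega)}\le \|u\|_{L^{\frac{2p_0}{2-p_0}}(\Omega)}\|\nabla u\|_{L^{2}(\Omega)},
\]
which is bounded by the $L^p$-control on $u$ together with the $\|\nabla u\|_{L^2}$ bound you listed. Then the non-divergence estimate $\|A^\gamma e^{-\sigma A}\mathcal{P}h\|_{L^2}\le C\sigma^{-\gamma-(1/p_0-1/2)}e^{-\lambda\sigma}\|h\|_{L^{p_0}}$ has exponent $\gamma+\frac1{p_0}-\frac12<1$ by the choice of $p_0$, and the integral converges. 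With this correction your argument is complete and equivalent to the paper's.
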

\begin{proof}
In what follows, let $C, C_i$ denote some different constants, 
and if no special explanation, they depend at most on $\Omega, \phi, m_0, n_0, c_0$ and
$u_0$. We only need prove \dref{1.163072xggttsdddyyu}, since, \dref{1.163072xggtsssstsdddyyu} can be proved very similarly.

%

{\bf Step 1. The boundedness of $\|\nabla m  (\cdot, t)\|_{L^{2}(\Omega)}$   for all $t\in (0, T_{max})$}

We multiply the third equation in \dref{1.dddduiikkldffdffg1} by $-\Delta m  $ and integrate by parts to see that
\begin{equation}
\begin{array}{rl}
&\disp\frac{1}{{2}}\disp\frac{d}{dt}\|\nabla{m  }\|^{{{2}}}_{L^{{2}}(\Omega)}+
\int_{\Omega} |\Delta m  |^2
\\
=&\disp{\int_{\Omega} m n  \Delta m  +\int_{\Omega} (u  \cdot\nabla m )\Delta m  }
\\
=&\disp{\int_{\Omega} m n  \Delta m -\int_{\Omega}\nabla m  (\nabla u  \cdot\nabla m )}\\
\leq&\disp{\int_{\Omega} m n  \Delta m +\|\nabla u  \|_{L^{2}(\Omega)}\|\nabla m  \|_{L^{4}(\Omega)}^2~~\mbox{for all}~~ t\in(0,T_{max}),}
\end{array}
\label{uiiiihhxxcsssdfvvjjczddfddssssdfff2.5}
\end{equation}
where in the last inequality we have used the Cauchy-Schwarz inequality.
Next,  with the help of the Young inequality as well as Lemma \ref{lsssemma3788999.5} (or Lemma \ref{ggh788999hjllsssemmkkllla3.5}) and \dref{ddczhjjjj2.5ghju48cfg9ssdd24},
\begin{equation}
\begin{array}{rl}
\disp\int_{\Omega} n m \Delta m \leq&\disp{\|m \|^2_{L^\infty(\Omega)}\int_{\Omega} n^2 +\frac{1}{4}\int_{\Omega}|\Delta m |^2  }\\
\leq&\disp{\| m_0\|^2_{L^\infty(\Omega)}C_1+\frac{1}{4}\int_{\Omega}|\Delta m |^2  ~~\mbox{for all}~~ t\in(0, T_{max})}\\
\end{array}
\label{ssdddaassshhxxcdfsssssvvjjcz2.5}
\end{equation}
with some $C_1>0$.
In the last summand in \dref{hhxxcsssdfvvjjczddfdddfff2.5},  thanks to \dref{ddfgczhhhh2.5sddddghju48cfg924ghyuji} and in view of the Gagliardo-Nirenberg inequality, we can find $C_2> 0$ and  $C_3> 0$ fulfilling
%
$$
\begin{array}{rl}
\disp \|\nabla m  \|_{L^{4}(\Omega)}^4\leq&\disp{C_{2}\|\Delta m  \|_{L^{2}(\Omega)}^2\|m  \|_{L^{\infty}(\Omega)}^2+C_{2}\|m  \|_{L^{\infty}(\Omega)}^4}\\
\leq&\disp{C_{3}\|\Delta m  \|_{L^{2}(\Omega)}+C_{3}
~~\mbox{for all}~~ t\in(0,T_{max}),}\\
\end{array}
$$
which implies that
$$
\begin{array}{rl}
\disp \|\Delta m  \|_{L^{4}(\Omega)}^4\geq&\disp{\frac{1}{C_3}(\|\nabla m  \|_{L^{4}(\Omega)}^4-1)
~~\mbox{for all}~~ t\in(0,T_{max}).}\\
\end{array}
$$
%
%
According to the  Young  inequality as well as  \dref{uiiiihhxxcsssdfvvjjczddfddssssdfff2.5} and \dref{ssdddaassshhxxcdfsssssvvjjcz2.5}, this means that with some $C_4>0$ we have
\begin{equation}
\begin{array}{rl}
\disp\frac{1}{{2}}\disp\frac{d}{dt}\|\nabla{m  }\|^{{{2}}}_{L^{{2}}(\Omega)}+
\|\nabla{m  }\|^{{{2}}}_{L^{{2}}(\Omega)}
\leq&\disp{C_4\|\nabla u  \|_{L^{2}(\Omega)}^2+C_4~~\mbox{for all}~~ t\in(0,T_{max}).}\\
\end{array}
\label{uiiiihhxxcsssdfvvjjczddfddssssssdfff2.5}
\end{equation}
 This combined with Lemma \ref{leddddmma63kkllll0jklhhjj} yields to for some $C_5>0$ such that
\begin{equation}
\begin{array}{rl}
\disp{\|\nabla m  (\cdot, t)\|_{L^{2}(\Omega)}}
\leq&\disp{C_5~~\mbox{for all}~~ t\in(0,T_{max})}.\\
\end{array}
\label{44444zjccfgghhhfgbhjcvvvbsssscz2.5297x96301ku}
\end{equation}

{\bf Step 2. The boundedness of $\|\nabla m  (\cdot, t)\|_{L^{{q}}(\Omega)}$ and $\|\nabla c  (\cdot, t)\|_{L^{{q}}(\Omega)}$   for all $q>8$ and  $t\in (0, T_{max})$}

For any $q>8$, one can choose $p\in(1,2)$ such that
$$-\frac{1}{2}-\frac{2}{2}(\frac{1}{p}-\frac{1}{q})>-1,$$
so that,
an application of the variation of
constants formula for $m $ and $c $  and using the $L^p$-$L^q$ estimates associated heat semigroup leads to
\begin{equation}
\begin{array}{rl}
&\disp{\|\nabla m  (\cdot, t)\|_{L^{{q}}(\Omega)}}\\
\leq&\disp{\|\nabla e^{t(\Delta-1)} m_0\|_{L^{{q}}(\Omega)}+
\int_{0}^t\|\nabla e^{(t-s)(\Delta-1)}(m  (\cdot,s)-n  (\cdot,s)m  (\cdot,s))\|_{L^{{q}}(\Omega)}ds}\\
&\disp{+\int_{0}^t\|\nabla e^{(t-s)(\Delta-1)}(u  (\cdot,s) \cdot \nabla m  (\cdot,s))\|_{L^{{q}}(\Omega)}ds}\\
\leq&\disp{C_1+
\int_{0}^t[1+(t-s)^{-\frac{1}{2}-\frac{2}{2}(\frac{1}{p}-\frac{1}{q})}] e^{-\lambda_1(t-s)}[\|m  (\cdot,s)\|_{L^{p}(\Omega)}+\|n  (\cdot,s) \|_{L^{p}(\Omega)}\|m  (\cdot,s))\|_{L^{\infty}(\Omega)}]ds}\\
&\disp{+\int_{0}^t[1+(t-s)^{-\frac{1}{2}-\frac{2}{2}(\frac{1}{p}-\frac{1}{q})}] e^{-\lambda_1(t-s)}\|u  (\cdot,s) \cdot \nabla m  (\cdot,s)\|_{L^{p}(\Omega)}ds}\\
\leq&\disp{C_1+
\int_{0}^t[1+(t-s)^{-\frac{1}{2}-\frac{2}{2}(\frac{1}{p}-\frac{1}{q})}] e^{-\lambda_1(t-s)}[\|m  (\cdot,s)\|_{L^{p}(\Omega)}+\|n  (\cdot,s) \|_{L^{p}(\Omega)}\|m  (\cdot,s))\|_{L^{\infty}(\Omega)}]ds}\\
&\disp{+\int_{0}^t[1+(t-s)^{-\frac{1}{2}-\frac{2}{2}(\frac{1}{p}-\frac{1}{q})}] e^{-\lambda_1(t-s)}\|u  (\cdot,s)\|_{L^{\frac{p}{2-p}}(\Omega)}\|\nabla m  (\cdot,s)\|_{L^{2}(\Omega)}ds}\\
\leq&\disp{C_2~~ \mbox{for all}~~ t\in(0,T_{max})}\\
\end{array}
\label{44444zjccfgghhhfgbhjcvvvbscz2.5297x96301ku}
\end{equation}
and
\begin{equation}
\begin{array}{rl}
&\disp{\|\nabla c  (\cdot, t)\|_{L^{q}(\Omega)}}\\
\leq&\disp{\|\nabla e^{t(\Delta-1)} c_0\|_{L^{q}(\Omega)}+
\int_{0}^t\|\nabla e^{(t-s)(\Delta-1)}(m  (\cdot,s))\|_{L^{q}(\Omega)}ds}\\
&\disp{+\int_{0}^t\|\nabla e^{(t-s)(\Delta-1)}(u (\cdot,s) \cdot\nabla c  (\cdot,s))\|_{L^{q}(\Omega)}ds}\\
\leq&\disp{C_3+
\int_{0}^t[1+(t-s)^{-\frac{1}{2}-\frac{2}{2}(\frac{1}{p}-\frac{1}{q})}] e^{-\lambda_1(t-s)}\|m  (\cdot,s)\|_{L^{p}(\Omega)}ds}\\
&\disp{+\int_{0}^t[1+(t-s)^{-\frac{1}{2}-\frac{2}{2}(\frac{1}{p}-\frac{1}{q})}] e^{-\lambda_1(t-s)}\|u (\cdot,s)\nabla c  (\cdot,s))\|_{L^{p}(\Omega)}ds}\\
\leq&\disp{C_3+
\int_{0}^t[1+(t-s)^{-\frac{1}{2}-\frac{2}{2}(\frac{1}{p}-\frac{1}{q})}] e^{-\lambda_1(t-s)}\|m  (\cdot,s)\|_{L^{p}(\Omega)}ds}\\
&\disp{+\int_{0}^t[1+(t-s)^{-\frac{1}{2}-\frac{2}{2}(\frac{1}{p}-\frac{1}{q})}] e^{-\lambda_1(t-s)}\|u (\cdot,s)\|_{L^{\frac{2p}{2-p}}(\Omega)}\|\nabla c  (\cdot,s))\|_{L^{2}(\Omega)}ds}\\
\leq&\disp{C_4~~ \mbox{for all}~~ t\in(0,T_{max})}\\
\end{array}
\label{44444zjccfgghhsssshfgbhjcvvvbscz2.5297x96301ku}
\end{equation}
with some $C_i(i=1,2,3,4)>0.$

{\bf Step 3. The boundedness of $\|n  (\cdot, t)\|_{L^{\infty}(\Omega)}$   for all  $t\in (0, T_{max})$}

 Let 
$\tilde{h} := S (n   )\nabla c  +u  $. Then by  \dref{44444zjccfgghhsssshfgbhjcvvvbscz2.5297x96301ku}, \dref{x1.73142vghf48gg}  and Lemma \ref{leddddmma63kkllll0jklhhjj},
there exists $C_{5} > 0$ such that
\begin{equation}
\begin{array}{rl}
\|\tilde{h}  (\cdot, t)\|_{L^{4}(\Omega)}\leq&\disp{C_{5}~~ t\in(0,T_{max}),}\\
\end{array}
\label{cz2ddff.57151ccvhhjjjkkkuuifghhhivhccvvhjjjkkhhggjjllll}
\end{equation}
where we have used the H\"{o}lder inequality.
Hence, due to the fact that $\nabla\cdot u  =0$,  again,  by means of an
associate variation-of-constants formula for $n $, we can derive
\begin{equation}
n  (\cdot,t)=e^{(t-t_0)\Delta}n  (\cdot,t_0)-\int_{t_0}^{t}e^{(t-s)\Delta}\nabla\cdot(n  (\cdot,s)\tilde{h}  (\cdot,s)) ds-\int_{t_0}^{t}e^{(t-s)\Delta}(n  (\cdot,s)m  (\cdot,s)) ds
\label{sss5555fghbnmcz2.5ghjjjkkklu48cfg924ghyuji}
\end{equation}
for all
$t\in(t_0, T_{max})$, where $t_0 := (t-1)_{+}$.
As the last summand in \dref{5555fghbnmcz2.5ghjjjkkklu48cfg924ghyuji} is non-positive by the maximum principle, we can thus estimate
\begin{equation}
\|n  (\cdot,t)\|_{L^\infty(\Omega)}\leq \|e^{(t-t_0)\Delta}n  (\cdot,t_0)\|_{L^\infty(\Omega)}+\int_{t_0}^{t}\|
e^{(t-s)\Delta}\nabla\cdot(n  (\cdot,s)\tilde{h}  (\cdot,s))\|_{L^\infty(\Omega)} ds,~~ t\in(t_0, T_{max}).
\label{5555fghbnmcz2.5ghjjjkkklu48cfg924ghyuji}
\end{equation}
If $t\in(0,1]$,
by virtue of the maximum principle, we derive that
\begin{equation}
\begin{array}{rl}
\|e^{(t-t_0)\Delta}n  (\cdot,t_0)\|_{L^{\infty}(\Omega)}\leq &\disp{\|n_0\|_{L^{\infty}(\Omega)},}\\
\end{array}
\label{zjccffgbhjffghhjcghhhjjjvvvbscz2.5297x96301ku}
\end{equation}
while if $t > 1$ then with the help of the  $L^p$-$L^q$ estimates for the Neumann heat semigroup and \dref{ddfgczhhhh2.5ghju48cfg924ghyuji}, we conclude that
\begin{equation}
\begin{array}{rl}
\|e^{(t-t_0)\Delta}n  (\cdot,t_0)\|_{L^{\infty}(\Omega)}\leq &\disp{C_{5}(t-t_0)^{-\frac{2}{2}}\|n  (\cdot,t_0)\|_{L^{1}(\Omega)}\leq C_{6}.}\\
\end{array}
\label{zjccffgbhjffghhjcghghjkjjhhjjjvvvbscz2.5297x96301ku}
\end{equation}
Finally, we fix an arbitrary $\frac{7}{2}\in(2,4)$ and then once more invoke known smoothing
properties of the
Stokes semigroup  and the H\"{o}lder inequality to find $C_{7}>0,C_{8}>0$ as well as $C_{9}>0$ 
such that
\begin{equation}
\begin{array}{rl}
&\disp\int_{t_0}^t\| e^{(t-s)\Delta}\nabla\cdot(n  (\cdot,s)\tilde{h}  (\cdot,s)\|_{L^\infty(\Omega)}ds\\
\leq&\disp C_{7}\int_{t_0}^t(t-s)^{-\frac{1}{2}-\frac{2}{7}}\|n  (\cdot,s)\tilde{h}  (\cdot,s)\|_{L^{\frac{7}{2}}(\Omega)}ds\\
\leq&\disp C_{8}\int_{t_0}^t(t-s)^{-\frac{1}{2}-\frac{2}{7}}\| n  (\cdot,s)\|_{L^{28}(\Omega)}\|\tilde{h}  (\cdot,s)\|_{L^{4}(\Omega)}ds\\
\leq&\disp C_{9}~~\mbox{for all}~~ t\in(0, T_{max}).\\
\end{array}
\label{ccvbccvvbbnnndffghhjjvcvvbccfbbnfgbghjjccmmllffvvggcvvvvbbjjkkdffzjscz2.5297x9630xxy}
\end{equation}
In combination with \dref{5555fghbnmcz2.5ghjjjkkklu48cfg924ghyuji}--\dref{ccvbccvvbbnnndffghhjjvcvvbccfbbnfgbghjjccmmllffvvggcvvvvbbjjkkdffzjscz2.5297x9630xxy} 
we obtain
$C_{10}> 0$ such that
\begin{equation}
\begin{array}{rl}
\|n  (\cdot, t)\|_{L^{\infty}(\Omega)}\leq&\disp{C_{10}~~ \mbox{for all}~~ t\in(0,T_{max}).}\\
\end{array}
\label{cz2.57ghhhh151ccvhhjjjkkkffgghhuuiivhccvvhjjjkkhhggjjllll}
\end{equation}

{\bf Step 4. The boundedness of $\|A^\gamma u (\cdot, t)\|_{L^2(\Omega)}$ (with $\gamma\in ( \frac{1}{2}, 1)$) and $\| u (\cdot, t)\|_{L^{\infty}(\Omega)}$ for all $t\in (0, T_{max})$}

Applying  the variation-of-constants formula for the projected version of the third
equation in \dref{1.dddduiikkldffdffg1}, we derive that
$$u (\cdot, t) = e^{-tA}u_0 +\int_0^te^{-(t-\tau)A}
\mathcal{P}[n (\cdot,\tau)\nabla\phi-\kappa
(Y u (\cdot,\tau) \cdot \nabla)u (\cdot,\tau)]d\tau~~ \mbox{for all}~~ t\in(0,T_{max}).$$
Therefore, with the help of the  standard smoothing
properties of the Stokes semigroup we derive  that for all $t\in(0,T_{max})$ and $\gamma\in ( \frac{1}{2}, 1)$,
there exist $C_{11} > 0$ as well as  $C_{12} > 0$ and $\lambda>0$  such that
\begin{equation}
\begin{array}{rl}
\|A^\gamma u (\cdot, t)\|_{L^2(\Omega)}\leq&\disp{\|A^\gamma
e^{-tA}u_0\|_{L^2(\Omega)} +\int_0^t\|A^\gamma e^{-(t-\tau)A}h (\cdot,\tau)d\tau\|_{L^2(\Omega)}d\tau}\\
\leq&\disp{\|A^\gamma u_0\|_{L^2(\Omega)} +C_{11}\int_0^t(t-\tau)^{-\gamma-\frac{2}{2}(\frac{1}{p_0}-\frac{1}{2})}e^{-\lambda(t-\tau)}\|h (\cdot,\tau)\|_{L^{p_0}(\Omega)}d\tau}\\
\leq&\disp{C_{12}+C_{11}\int_0^t(t-\tau)^{-\gamma-\frac{2}{2}(\frac{1}{p_0}-\frac{1}{2})}e^{-\lambda(t-\tau)}\|h (\cdot,\tau)
\|_{L^{p_0}(\Omega)}d\tau}\\
\end{array}
\label{ssddcz2.571hhhhh51222ccvvhddfccvvhjjjkkhhggjjllll}
\end{equation}
by using \dref{ccvvx1.731426677gg},
where   $h (\cdot,\tau)=\mathcal{P}[n  (\cdot,\tau)\nabla\phi-\kappa
(Y u  (\cdot,\tau) \cdot \nabla)u (\cdot,\tau)]$ and $p_0\in (1,2)$ which satisfies 
\begin{equation}p_0>\frac{2}{3-2\gamma}.
\label{cz2.571hhhhh51222ccvvhddfccffgghhhhvvhjjjkkhhggjjllll}
\end{equation}
Now, in order to estimate $\|h (\cdot,\tau)
\|_{L^{p_0}(\Omega)}$, we use the H\"{o}lder inequality and the continuity of $\mathcal{P}$ in $L^p(\Omega;\mathbb{R}^2)$ (see \cite{Fujiwara66612186}) as well as the boundedness of $\|n (\cdot, t)\|_{L^{\infty}(\Omega)}$ (for all  $t\in (0, T_{max})$), we see that
there exist $C_{13},C_{14},C_{15}, C_{16}> 0$ and  $C_{17} > 0$ that
\begin{equation}
\begin{array}{rl}
\|h (\cdot,t)\|_{L^{p_0}(\Omega)}\leq& C_{13}\|(  u  \cdot \nabla)u (\cdot,t)\|_{L^{p_0}(\Omega)}+ C_{13}\|n (\cdot,t)\|_{L^{p_0}(\Omega)}\\
\leq& C_{14}\|  u \|_{L^{\frac{2p_0}{2-p_0}}(\Omega)} \|\nabla u (\cdot,t)\|_{L^{2}(\Omega)}+ C_{24}\\
\leq& C_{15}\|\nabla   u \|_{L^{2}(\Omega)} \|\nabla u (\cdot,t)\|_{L^{2}(\Omega)}+ C_{24}\\
\leq& C_{16}\|\nabla u \|_{L^{2}(\Omega)}^2+ C_{24}\\
\leq& C_{17}~~~\mbox{for all}~~ t\in(0,T_{max})\\
\end{array}
\label{cz2.571hhhhh5122ddfddfffgg2ccvvhddfccffgghhhhvvhjjjkkhhggjjllll}
\end{equation}
by using $W^{1,2}(\Omega)\hookrightarrow L^\frac{2p_0}{2-p_0}(\Omega)$ and the boundedness of $\|\nabla u (\cdot,t)\|_{L^{2}(\Omega)}.$
Inserting \dref{cz2.571hhhhh5122ddfddfffgg2ccvvhddfccffgghhhhvvhjjjkkhhggjjllll} into \dref{ssddcz2.571hhhhh51222ccvvhddfccvvhjjjkkhhggjjllll} and applying \dref{cz2.571hhhhh51222ccvvhddfccffgghhhhvvhjjjkkhhggjjllll}, we conclude that for some $C_{28}$ such that for all $t\in(0,T_{max})$,
\begin{equation}
\begin{array}{rl}
\|A^\gamma u (\cdot, t)\|_{L^2(\Omega)}\leq&\disp{C_{12}+C_{18}\int_0^t(t-\tau)^{-\gamma-\frac{2}{2}(\frac{1}{p_0}-\frac{1}{2})}e^{-\lambda(t-\tau)}\|h (\cdot,\tau)
\|_{L^{p_0}(\Omega)}d\tau,}\\
\end{array}
\label{ssddcz2.571hhjjjhhhhh51222ccvvhddfccvvhjjjksdddffkhhggjjllll}
\end{equation}
where we have used the fact that
$$\begin{array}{rl}\disp\int_{0}^t(t-\tau)^{-\gamma-\frac{2}{2}(\frac{1}{p_0}-\frac{1}{2})}e^{-\lambda(t-\tau)}ds
\leq&\disp{\int_{0}^{\infty}\sigma^{-\gamma-\frac{2}{2}(\frac{1}{p_0}-\frac{1}{2})} e^{-\lambda\sigma}d\sigma<+\infty}\\
\end{array}
$$
by \dref{cz2.571hhhhh51222ccvvhddfccffgghhhhvvhjjjkkhhggjjllll}.
Observe that  $D(A^\gamma)$ is continuously embedded into $L^\infty(\Omega)$ by $\gamma>\frac{1}{2},$ so that,  \dref{ssddcz2.571hhjjjhhhhh51222ccvvhddfccvvhjjjksdddffkhhggjjllll} yields to
 \begin{equation}
\begin{array}{rl}
\|u (\cdot, t)\|_{L^\infty(\Omega)}\leq  C_{19}~~ \mbox{for all}~~ t\in(0,T_{max}).\\
\end{array}
\label{cz2.5jkkcvvvhjdsdfffffdkfffffkhhgll}
\end{equation}

{\bf Step 4. The boundedness of $\|c  (\cdot, t)\|_{W^{1,\infty}(\Omega)}$ and $\|m  (\cdot, t)\|_{W^{1,\infty}(\Omega)}$
 for all  $t\in (\tau, T_{max})$ with $\tau\in(0,T_{max})$}

Choosing $\theta\in(\frac{1}{2}+\frac{1}{4},1),$ 
 then the domain of the fractional power $D((-\Delta + 1)^\theta)\hookrightarrow W^{1,\infty}(\Omega)$ (see e.g.  \cite{Horstmann791,Winkler792}). Thus,  
 using the H\"{o}lder inequality and the $L^p$-$L^q$ estimates associated heat semigroup, 
\begin{equation}
\begin{array}{rl}
&\| c  (\cdot, t)\|_{W^{1,\infty}(\Omega)}\\
\leq&\disp{C_{20}\|(-\Delta+1)^\theta c  (\cdot, t)\|_{L^{4}(\Omega)}}\\
\leq&\disp{C_{21}t^{-\theta}e^{-\lambda_1 t}\|c_0\|_{L^{4}(\Omega)}+C_{21}\int_{0}^t(t-s)^{-\theta}e^{-\lambda_1(t-s)}
\|(m  -u   \cdot \nabla c  )(\cdot,s)\|_{L^{4}(\Omega)}ds}\\
\leq&\disp{C_{22}+C_{22}\int_{0}^t(t-s)^{-\theta}e^{-\mu(t-s)}[\|m (\cdot,s)\|_{L^{\infty}(\Omega)}+\|c  (\cdot,s)\|_{L^{\infty}(\Omega)}+\|u  (\cdot,s)\|_{L^\infty(\Omega)}
\|\nabla c  (\cdot,s)\|_{L^{4}(\Omega)}]ds}\\
\leq&\disp{C_{23}~~ \mbox{for all}~~ t\in(\tau,T_{max})}\\
\end{array}
\label{zjccffgbhjcvvvbscz2.5297x96301ku}
\end{equation}
and
\begin{equation}
\begin{array}{rl}
&\| m  (\cdot, t)\|_{W^{1,\infty}(\Omega)}\\
\leq&\disp{C_{24}\|(-\Delta+1)^\theta m  (\cdot, t)\|_{L^{4}(\Omega)}}\\
\leq&\disp{C_{25}t^{-\theta}e^{-\lambda_1 t}\|m_0\|_{L^{4}(\Omega)}+C_{25}\int_{0}^t(t-s)^{-\theta}e^{-\lambda_1(t-s)}
\|(m  -m  n  -u   \cdot \nabla m  )(\cdot,s)\|_{L^{4}(\Omega)}ds}\\
\leq&\disp{C_{26}+C_{26}\int_{0}^t(t-s)^{-\theta}e^{-\mu(t-s)}[\|m  (\cdot,s)\|_{L^{\infty}(\Omega)}+\|n  (\cdot,s)\|_{L^{4}(\Omega)}+\|u  (\cdot,s)\|_{L^\infty(\Omega)}
\|\nabla m  (\cdot,s)\|_{L^{4}(\Omega)}]ds}\\
\leq&\disp{C_{27}~~ \mbox{for all}~~ t\in(\tau,T_{max})}\\
\end{array}
\label{zjccffgbhjcvbscz97x96u}
\end{equation}
with $\tau\in(0,T_{max})$,
where  we have used \dref{ddfgczhhhh2.5ghju48cfg924ghyuji} as well as  the H\"{o}lder inequality and
$$\int_{0}^t(t-s)^{-\theta}e^{-\lambda_1(t-s)}\leq \int_{0}^{\infty}\sigma^{-\theta}e^{-\lambda_1\sigma}d\sigma<+\infty.$$
Finally,
 by virtue of Lemma \ref{lemma70} and
\dref{cz2.571hhhhh51ccvvhddfccvvhjjjkkhhggjjllll}, \dref{cz2.57ghhhh151ccvhhjjjkkkffgghhuuiivhccvvhjjjkkhhggjjllll}, \dref{cz2.5jkkcvvvhjdsdfffffdkfffffkhhgll}--\dref{zjccffgbhjcvbscz97x96u},  the local solution can be extend to the global-in-time solutions.
Employing almost exactly the same arguments as in the proof of Lemma 3.1 in  \cite{LiLiLiLisssdffssdddddddgssddsddfff00} (see also \cite{Zhenddsdddddgssddsddfff00} and \cite{Wanssssssg21215}), and taking advantage of \dref{1.163072xggttsdddyyu}, we conclude the regularity theories for the Stokes semigroup and the   H\"{o}lder estimate for local solutions of parabolic equations, we can
obtain weak solution $(n,c,m,u)$   is a classical solution.
\end{proof}

{\bf Data availability}

No data was used for the research described in the article.

{\bf Acknowledgments}:
This work is partially supported by  Shandong Provincial Natural Science Foundation
 (No. ZR2022JQ06 and No. ZR2021QA052) as well as the National Natural
Science Foundation of China (No. 11601215 and No.12101534).

\end{document}